\newtheorem{thm}{Theorem}[section]
\newtheorem{theorem}[thm]{Theorem}
\newtheorem{corollary}[thm]{Corollary}
\newtheorem{lemma}[thm]{Lemma}
\newtheorem{proposition}[thm]{Proposition}
\theoremstyle{definition}
\newtheorem{definition}[thm]{Definition}
\newtheorem{examples}[thm]{Examples}
\newtheorem{remark}[thm]{Remark}
\newtheorem{remarks}[thm]{Remarks}
\newtheorem{notation}[thm]{Notation}
\newtheorem{free text}[thm]{}
\newcommand{\N} {\mathbb{N}}
\newcommand{\Z} {\mathbb{Z}}
\newcommand{\KK} {\mathbb{K}}
\newcommand{\bk} {\Bbbk}
\newcommand{\bt} {\mathbf{t}}
\newcommand{\bu} {\mathbf{u}}
\newcommand{\bG} {\mathbf{G}}
\newcommand{\bT} {\mathbf{T}}
\newcommand{\fg} {\mathfrak{g}}
\newcommand{\fh} {\mathfrak{h}}
\newcommand{\kzg} {K_{\Z}(\fg)}
\newcommand{\kzag} {K_{\Z_a}\!(\fg)}
\newcommand{\alg} {\mathrm{(alg)}}
\newcommand{\salg} {\mathrm{(salg)}}
\newcommand{\sets}{{(\mathrm{sets})}}
\newcommand{\grps} {\mathrm{(groups)}}
\newcommand{\spec}{{\hbox{\sl Spec}\,}}
\newcommand{\uspec}{\underline{\hbox{\sl Spec}\,}}
\newcommand{\Hom}{\mathrm{Hom}}
\newcommand{\lra} {\longrightarrow}
\newcommand{\End}{\mathrm{End}}
\newcommand{\rGL}{\mathrm{GL}}
\newcommand{\Lie}{\mathrm{Lie}}
\newcommand{\rsl}{\mathfrak{sl}}
\newcommand{\rgl}{\mathfrak{gl}}
\newcommand{\cL}{\mathcal{L}}
\newcommand{\cO}{\mathcal{O}}
\newcommand{\undt}{\underline{t}}
\font \smallrm=cmr10 at 11truept
\font \smallsl=cmsl10 at 11truept
\begin{document}

{\ }

\vskip-89pt

\hfill
  {\smallrm {\smallsl Proceedings of the Edinburgh Mathematical Society\/}  (to appear)}
%
%
 \hskip19pt   {\ }
                                      \par
\hfill  {\ } 
%

\hskip19pt   {\ }

\vskip41pt

\centerline{\Large \bf CHEVALLEY SUPERGROUPS}
 \vskip9pt
\centerline{\Large \bf OF TYPE  $ D(2,1;a) $}

\vskip29pt

\centerline{ F. Gavarini }

\vskip11pt

\centerline{\it Dipartimento di Matematica, Universit\`a di Roma ``Tor Vergata'' } \centerline{\it via della ricerca scientifica 1  --- I-00133 Roma, Italy}

\centerline{{\footnotesize e-mail: gavarini@mat.uniroma2.it}}

\vskip57pt

\begin{abstract}
 We present a construction ``\`a la Chevalley'' of connected affine supergroups associated with Lie superalgebras of type  $ D(2,1;a) $,  for any possible value of the parameter  $ a \, $.  This extends the results in  \cite {fg1}  (and  \cite{fg2}) where all other simple Lie superalgebras of classical type were considered.  The case of simple Lie superalgebras of Cartan type being dealt with in  \cite{ga},  so this work completes the program of constructing connected affine supergroups associated with any simple Lie superalgebra.
\end{abstract}

\vskip45pt

\section{Introduction}

\smallskip

   {\ }
 \footnote{\ 2010 {\it MSC}\;: \, Primary 14M30, 14A22; Secondary 17B20.}
 In his work of 1955, Chevalley provided a combinatorial construction of all simple affine algebraic groups over any field.  In particular, his method led to an existence theorem for simple affine algebraic groups: one starts with a simple (complex, finite-dimensional) Lie algebra and a simple module  $ V $  for it, and realizes the required group as a closed subgroup of  $ \rGL(V) \, $.  This can also be recast as to provide a description of all simple affine groups as group schemes over  $ \Z \, $.
                                                                \par
   In  \cite{fg1}  the philosophy of Chevalley was revisited in the context of supergeometry.  The outcome is a construction of affine supergroups whose tangent Lie superalgebra is of  {\sl classical type}.  However, some exceptions were left out, namely the cases when the Lie superalgebra is of type  $ D(2,1;a) $  and the parameter  $ a $  is not an integer number; the present work fills in this gap.  As the case of simple Lie superalgebras of Cartan type is solved in  \cite{ga},  this paper completes the program of constructing connected affine supergroups associated with any simple Lie superalgebra.

\smallskip

   By ``affine supergroup'' here I mean a representable functor from the category  $ \salg $  of commutative superalgebras   --- over some fixed ground ring ---   to the category  $ \grps $  of groups: in other words, an affine supergroup-scheme, identified with its functor of points.
%
%
 In  \cite{fg1},  one first constructs a functor from  $ \salg $  to  $ \grps \, $,  recovering Chevalley's ideas to define the values of such a group functor on each superalgebra  $ A $   --- i.e., to define its  $ A $--points;  then one proves that the sheafification of this functor is representable   --- hence it is an affine supergroup-scheme.

\smallskip

   For the case  $ D(2,1;a) $   --- with  $ \, a \not\in \Z \, $  ---   one needs a careful modification of the general procedure of  \cite {fg1};  thus the presentation hereafter will detail those steps which need changes, and will simply refer to  \cite{fg1}  for those where the original arguments still work unchanged.

\medskip

   The initial datum is a simple Lie superalgebra  $ \, \fg = D(2,1;a) \, $.
                                                          \par
   We start with basic results on  $ \fg \, $:  the existence of  {\sl Chevalley bases\/}  (with nice integrality properties) and a PBW theorem for the Kostant $ \Z $--form  of the universal enveloping superalgebra  $ U(\fg) \, $.
                                                          \par
   Next we take a faithful, finite-dimensional  $ \fg $--module $ V \, $,  and we show it has suitable lattices  $ M $  invariant by the Kostant superalgebra.  This allows to define   --- functorially ---   additive and multiplicative one-parameter (super)subgroups of operators acting on scalar extensions of  $ M $.  The additive subgroups are just like in the general case: there exists one of them for every root of  $ \fg \, $.  The multiplicative ones instead are associated to elements of the fixed Cartan subalgebra of  $ \fg \, $,  and are of two types: those of  {\sl classical\/}  type, modeled on the group functor  $ \, A \mapsto U(A_0) \, $   --- the  {\sl group of units\/}  of  $ A_0 $  ---   and those of  {\sl  $ a $--type},  modeled on the group functor  $ \, A \mapsto P_a(A) \, $   --- the group of elements of  $ A_0 $  ``which may be raised to the $ a^k $--th  power, for  all  $ k \, $''.  The second type of multiplicative one-parameter subgroups, not used in  \cite{fg1},  is now needed because one has to consider the ``operation''  $ \, t \mapsto t^a \, $,
%
%
 defined just
for  $ \, t \in P_a(A) \, $;  this marks a difference with the case  $ \, a \in \Z \, $.
                                                          \par
   Then we consider the functor  $ \, G : \salg \longrightarrow \grps \, $  whose value  $ G(A) $  on  $ \, A \in \salg \, $  is the subgroup of  $ \rGL(V(A)) $   --- with  $ \, V(A) := A \otimes M \, $  ---   generated by all the homogeneous one-parameter supersubgroups mentioned above.  This functor is a presheaf, hence we can take its sheafification  $ \, \bG_V \! = \bG : \salg \longrightarrow \grps \, $.  These  $ \bG_V $  are, by definition, our ``Chevalley supergroups''.
                                                          \par
   Acting just like in  \cite{fg1},  one defines a ``classical affine subgroup''  $ \bG_0 $  of  $ \bG_V \, $,  corresponding to the even part  $ \fg_0 $  of  $ \fg $  (and to  $ V $),  and then finds a factorization  $ \; \bG_V = \bG_0 \, \bG_1 \cong \bG_0 \times \bG_1 \, $,  where  $ \bG_1 $  corresponds instead to the odd part $ \fg_1 $  of  $ \fg \, $.  Actually, one has even a finer factorization  $ \, \bG_V = \bG_0 \times \bG_1^{-,<} \times \bG_1^{+,<} \, $  with  $ \bG_1^{\pm,<} $  being totally odd superspaces associated to the positive or negative odd roots of  $ \fg \, $.  Thus  $ \, \bG_1 = \bG_1^{-,<} \times \bG_1^{+,<} \, $  is representable, and  $ \bG_0 $  is representable too, hence the above factorization implies that  $ \bG_V $  is representable too, so it is an affine supergroup.  The outcome then is that our Chevalley supergroups are affine supergroups.

\smallskip

   Finally, one also proves that our construction is functorial in  $ V $  and that  $ \Lie(\bG_V) $  is just  $ \fg $  as one expects, like in  \cite{fg1}  (no special changes are needed).

\vskip21pt

   \centerline{\bf Acknowledgements}
 \vskip5pt
   The author thanks M.~Duflo, R.~Fioresi and C.~Gasbarri for many useful conversa\-tions, and K.~Iohara and Y.~Koga for their valuable explanations.

\bigskip

\section{Preliminaries}  \label{preliminaries}

\smallskip

%
%

 \subsection{Superalgebras, superspaces, supergroups}  \label{first_preliminaries}

\smallskip

   {\ } \quad   Let  $ \bk $  be a unital, commutative ring.

\vskip5pt

   We call  {\it  $ \bk $--superalgebra\/}  any associative, unital  $ \bk $--algebra  $ A $  which is  $ \Z_2 $--graded.  Thus  $ A $  splits as  $ \; A = A_0 \oplus A_1 \; $,  and  $ \; A_a \, A_b \subseteq A_{a+b} \; $.  The  $ \bk $--submodule  $ A_0 $  and its elements are called  {\it even},  while  $ A_1 $  and its elements  {\it odd}\,.  By  $ p(x) $  we denote the  {\sl parity\/}  of any homogeneous element  $ \, x \in A_{p(x)} \, $.  A superalgebra  $ A $  is said to be  {\it commutative\/}  iff  $ \; x y = (-1)^{p(x)p(y)} y x \; $  for all homogeneous  $ \, x $,  $ y \in A \, $,  and  $ \, z^2 = 0 \, $  for all odd  $ \, z \in A_1 \, $.
   Clearly,  $ \bk $--superalgebras  form a category, whose morphisms are those in the category of algebras which preserve the unit and the  $ \Z_2 $--grading; we denote  $ \salg $  the full subcategory of commutative superalgebras; we shall also write  $ \salg_\bk $  to stress the role of  $ \bk \, $.
                                                   \par
   At last, for any  $ \, n \in \N \, $  we call  $ A_1^{\,n} $  the  $ A_0 $--submodule  of  $ A $  spanned by all products  $ \, \vartheta_1 \cdots \vartheta_n \, $  with  $ \, \vartheta_i \in A_1 \, $  for all  $ i \, $,  and  $ A_1^{(n)} $  the unital  $ \bk $--subalgebra  of  $ A $  generated by  $ A_1^{\,n} \, $.

\vskip5pt

   We consider the notions of  {\sl superspace},  {\sl (affine) superscheme\/}  and  {\sl (affine) supergroup\/}  as defined in detail in  \cite{ccf}  and  \cite{fg1}:  here we just recall them rather quickly.  Roughly speaking, a  {\sl superspace\/}  is a locally ringed space whose structure sheaf is made of commutative superalgebras, the stalks being local; the morphisms among superspaces then are morphisms of locally ringed spaces, respecting the parity on sections of the structure sheaf.  In any superspace  $ S $,  the structure sheaf  $ \cO_S $  has natural ``even'' and ``odd'' parts, say  $ \cO_{S,0} $  and  $ \cO_{S,1} \, $:  the latter is a sheaf of modules over the former, which in turn is just a sheaf of commutative algebras (whose stalks are local).
                                                   \par
   A  {\sl superscheme\/}  is just a superspace  $ S $  for which  $ \cO_{S,1} $  is quasi-coherent as a  $ \cO_{S,0} $--module.

\vskip5pt

   For any  $ \, A \in \salg \, $,  the prime spectrum  $ \spec(A_0) $  of its even part bears a natural structure of superscheme, denoted by  $ \uspec(A) \, $,  induced by the fact that  $ A $  itself is an  $ A_0 $--module.  Any superscheme which is isomorphic to such a  $ \uspec(A) $  is said to be  {\sl affine}.
%
%
%

\vskip9pt

   If  $ X $  is any superscheme, its  {\it functor of points\/}  is the functor  $ \; h_X : \salg \lra \sets \; $  defined on objects by  $ \; h_X(A) := \Hom \big(\, \uspec(A) \, , X \big) \; $  and on arrows by  $ \; h_X(f)(\phi) := \phi \circ \uspec (f) \; $.  If  $ \grps $  is the category of groups and  $ h_X $  is a functor  $ \; h_X : \salg \lra \grps \, $,  then we say that  $ X $  is a  {\it supergroup}.  When  $ X $  is affine, this is equivalent to the
fact that $ \, \cO(X) \, $   --- the superalgebra of global sections of the structure sheaf on  $ X $  ---   is a
(commutative)  {\sl Hopf superalgebra}. More in general, by  {\it supergroup functor\/}
we mean any functor  $ \; G : \salg \lra \grps \; $.

\vskip3pt

   Any representable supergroup functor is the same as an affine supergroup: indeed, the former corresponds to the functor of points of the latter.
%
%
 Thus we use the same letter to denote both a superscheme and its functor of points.  See  \cite{ccf},  Ch.~3--5, for more details.
%
%
%

\medskip

\begin{examples} {\ }
 \vskip2pt
   {\it (1)} \,  The  {\sl affine superspace\/}  $ \, \mathbb{A}_\bk^{p|q} \, $,  also denoted  $ \, \bk^{p|q} \, $,  is defined  --- for each  $ \, p \, $, $ q \in \N \, $  ---   as  $ \; \mathbb{A}_\bk^{p|q} := \uspec \big( \bk[x_1,\dots,x_p] \otimes_\bk \bk[\xi_1 \dots \xi_q] \big) \; $;  hereafter  $ \bk[\xi_1 \dots \xi_q] $  is the exterior (or ``Grassmann'') algebra generated by  $ \xi_1 $,  $ \dots $,  $ \xi_q \, $,  and  $ \, \bk[x_1,\dots,x_p] \, $  the polynomial algebra in  $ p $  indeterminates.
 \vskip2pt
   {\it (2)} \,  Let  $ V $  be a free  $ \bk $--supermodule.  Set  $ \; V(A) \, := \, {(A \otimes V)}_0 \, = \, A_0 \otimes V_0 \oplus A_1 \otimes V_1 \; $  for any  $ \, A \in \salg \, $:  this yields a representable functor in the category of superalgebras, represented by the superalgebra of polynomial functions on  $ V \, $.  Hence  $ V $  can be seen as an affine superscheme.
 \vskip2pt
   {\it (3)} \,  {\sl  $ \rGL(V) $  as an affine supergroup}.  Let  $ V $  be a free  $ \bk $--supermodule  of finite (super)rank  $ p|q \, $.  For any  $ \, A \in \salg_\bk \, $,  let  $ \, \rGL(V)(A) := \rGL\big(V(A)\big) \, $  be the set of isomorphisms  $ \; V(A) \lra V(A) \; $.  If we fix a homogeneous basis for  $ V $,  we see that  $ \, V \cong \bk^{p|q} \; $;  in other words,  $ \, V_0 \cong \bk^p \, $  and  $ \, V_1 \cong \bk^q \, $.  In this case, we also denote $ \, \rGL(V) \, $  with  $ \, \rGL_{p|q} \, $.  Now,  $ \rGL_{p|q}(A) $  is the group of invertible matrices of size  $ (p+q) $  with diagonal block entries in  $ A_0 $  and off-diagonal block entries in $ A_1 \, $.  It is known that the functor  $ \rGL(V) $  is representable (see  \cite{vsv}, Ch.~3,  for details).   \hfill  $ \diamondsuit $
\end{examples}

\medskip

 \subsection{Lie superalgebras}  \label{Lie-superalgebras}

\smallskip

   {\ } \quad   The notion of Lie superalgebra is well known, at least  over a field of characteristic neither 2 nor 3.  To take into account  {\sl any ground ring},  we consider a modified formulation: it is a ``correct'' notion of Lie superalgebra given by the standard notion enriched with an additional  ``$ 2 $--mapping'', an analogue to the  $ p $--mapping  in a  $ p $--restricted  Lie algebra over a field of characteristic  $ p > 0 \, $.

\smallskip

\begin{definition}  \label{def-Lie-salg}
 {\it (cf.~\cite{bmpz}, \cite{du})} \,
  Let  $ \, A \in \salg_\bk \, $.  We call  {\sl Lie\/  $ A $--superalgebra\/}  any  $ A $--supermodule  $ \, \fg = \fg_0 \oplus \fg_1 \, $  endowed with a  {\it (Lie super)bracket\/}  $ \; [\,\ ,\ ] : \fg \times \fg \longrightarrow \fg \, $,  $ \; (x,y) \mapsto [x,y] \, $,  \, and a  {\it  $ 2 $--operation\/}  $ \; {(\ )}^{\langle 2 \rangle} : \fg_1 \longrightarrow \fg_0 \, $,  $ \; z \mapsto z^{\langle 2 \rangle} \, $,  \, such that (for all  $ \, x , y \in \fg_0 \cup \fg_1 \, $,  $ \, w \in \fg_0 \, $,  $ \, z, z_1, z_2 \in \fg_1 $):
 \vskip5pt
   {\it (a)}  \quad  $ [\,\ ,\ ] \, $  is  $ A $--superbilinear (in the obvious sense)\;,
\qquad  $ [w,w] \; = \; 0  \;\; ,  \qquad  \big[z[z,z]\big] \; = \; 0  \quad $;
 \vskip5pt
   {\it (b)}  $ \qquad  [x,y] \, + \, {(-1)}^{p(x) \, p(y)}[y,x] \; = \; 0  \qquad\; $  {\sl (anti-symmetry)}\,;
 \vskip7pt
   {\it (c)}  $ \quad  {(-\!1)}^{p(x) p(z)} [x,[y,z]] + {(-\!1)}^{p(y) p(x)} [y,[z,x]] \, + \, {(-\!1)}^{p(z) p(y)} [z,[x,y]] \, = \, 0 \;\;\, $  {\sl (Jacobi identity)}\,;
 \vskip6pt
   {\it (d)}  \;\quad  $ {(\ )}^{\langle 2 \rangle} \, $  is  $ A $--quadratic, i.e.~$ \;\;  {(a_0 \, z)}^{\langle 2 \rangle} = \, a^2 \, z^{\langle 2 \rangle} \;\; $,  $ \;\;  {(a_1 \, w)}^{\langle 2 \rangle} = \, 0 \;\;\; $  for  $ \; a_0 \in A_0 \, $,  $ \; a_1 \in A_1 \; $;
 \vskip6pt
%
%
   {\it (e)}  \qquad  $ {(z_1 \! + z_2)}^{\langle 2 \rangle}  \, = \;  z_1^{\langle 2 \rangle} + \, [z_1,z_2] \, + \, z_2^{\langle 2 \rangle}  \quad ,  \quad \qquad  \big[ z^{\langle 2 \rangle}, x \big]  \, = \;  \big[ z \, , [z,x] \big] \quad $.
 \vskip9pt
   All Lie  $ A $--superalgebras  form a category, whose morphisms are the  $ A $--superlinear  (in the obvious sense), graded maps preserving the bracket and the  $ 2 $--operation.
 \vskip3pt
   A Lie superalgebra  $ \fg $  is called  {\it classical\/}  if it is simple, i.e.~it has no nontrivial (homogeneous) ideals, and  $ \fg_1 $  is semisimple as a  $ \fg_0 $--module.
%
%
 Classical Lie superalgebras of finite dimension over algebraically closed fields of characteristic zero were classified by V.~Kac  (cf.~\cite{ka}, \cite{sc}),  whom we shall refer to for the standard terminology and notions.
\end{definition}

\smallskip

\begin{examples}   \label{def-Lie/Ass_End(V)}
 {\it (a)} \,  Let  $ \, \mathcal{A} = \mathcal{A}_0 \oplus \mathcal{A}_1 \, $  be any associative  $ \bk $--superalgebra.  There is a canonical structure of Lie superalgebra on  $ \mathcal{A} $  given by  $ \,\; [x,y] \, := \, x\,y - {(-1)}^{p(x) p(y)} y\,x \;\, $  for all homogeneous  $ \, x, y \in \mathcal{A}_0 \cup \mathcal{A}_1 \; $
and  $ 2 $--operation  $ \,\; z^{\langle 2 \rangle} := z^2 = z\,z \;\, $  (the associative square in  $ \mathcal{A} $)  for all odd  $ \, z \in \mathcal{A}_1 \, $.
 \vskip4pt
   {\it (b)} \,  Let  $ \, V = V_0 \oplus V_1 \, $  be a  {\sl free\/}  $ \bk $--supermodule,  and consider  $ \End(V) \, $,  the endomorphisms of  $ V $  as an ordinary  $ \bk $--module.  This is again a free super  $ \bk $--module,  $ \; \End(V) = \End(V)_0 \oplus \End(V)_1 \, $,  \; where $ \End(V)_0 $  are the morphisms which preserve the parity, while  $ \End(V)_1 $  are the morphisms which reverse the parity.  By the recipe in  {\it (a)},  $ \End(V) $  is a Lie  $ \bk $--superalgebra
with  $ \; [A,B] := A B - {(-1)}^{p(A) p(B)} B A \, $,  $ \; C^{\langle 2 \rangle} := C^2 \, $,  \; for all  $ \, A, B, C \in \End(V) \; $  homogeneous, with  $ C $  odd.
                                                                         \par
   The standard example is for  $ V $  of finite rank, say  $ \, V := \bk^{p|q} = \bk^p \oplus \bk^q \, $,  with  $ \, V_0 := \bk^p \, $  and  $ \, V_1 := \bk^q \, $:  in this case we also write  $ \; \End\big(\bk^{m|n}\big) \! := \End(V) \; $  or  $ \; \rgl_{p|q} := \End(V) \; $.  Choosing a basis for  $ V $  of homogeneous elements (writing first the even ones), we identify  $ \End(V)_0 $  with the set of all diagonal block matrices, and  $ \End(V)_1 $  with the set of all off-diagonal block matrices.   \hfill  $ \diamondsuit $
\end{examples}

\medskip

 \subsection{The Lie superalgebra  $ D(2,1;a) $}  \label{def_D(2,1;a)}

\smallskip

   {\ } \quad   Let  $ \KK $  be an algebraically closed field of characteristic zero, and let  $ \, a \in \KK \setminus \{0,-1\} \, $.  Then let  $ \, \Z[a] \, $  be the unital subring of  $ \KK $  generated by  $ a \, $.  Clearly  $ \, \Z[a] = \Z \, $  if and only if  $ \, a \in \Z \, $.

\smallskip

   According to Kac's work, we can realize  $ \, \fg := D(2,1;a) \, $  as a contragredient Lie superalgebra: in particular, it admits a presentation by generators and relations with a standard procedure (detailed in general in  \cite{ka}).  In order to do that, we first fix a specific choice of Dynkin diagram and corresponding Cartan matrix, like in  \cite{fss}, \S 2.28  (first choice), namely
 \vskip-5pt
  $$  {\buildrel 2 \over {\textstyle \bigcirc}} \hskip-3,5pt
\joinrel\relbar\joinrel\relbar\hskip-7pt{\buildrel 1 \over -}\hskip-7pt\relbar\joinrel\relbar\joinrel
\hskip-3,5pt
{\buildrel 1 \over {\textstyle \bigotimes}}
\hskip-3,5pt
\joinrel\relbar\joinrel\relbar\hskip-7pt{\buildrel a \over -}\hskip-7pt\relbar\joinrel\relbar\joinrel
\hskip-3,5pt
{\buildrel 3 \over {\textstyle \bigcirc}}  \hskip17pt ,   \hskip35pt
  {\big( a_{i,j} \big)}_{i,j=1,2,3;}  \, :=  \begin{pmatrix}
                                              0  &  1  &  a \,  \\
                                             -1  &  2  &  0 \,  \\
                                             -1  &  0  &  2 \,
                                             \end{pmatrix}  $$
   We define  $ \, \fg = D(2,1;a) \, $  as the Lie superalgebra over  $ \KK $  with generators  $ \; h_i \, $,  $ \, e_i \, $,  $ \, f_i \;\; (i=1,2,3) \, $,  \, with degrees  $ \; p(h_i) := 0 \, $,  $ \, p(e_i) := \delta_{1,i} \, $,  $ \, p(f_i) := \delta_{1,i} \;\; (i=1,2,3) \, $,  and relations (for  $ \, i,j=1,2,3 \, $)
  $$  \displaylines{
   \big[ h_i, h_j \big] = 0 \;\; ,  \qquad  \big[ e_1, e_1 \big] = 0 \;\; ,  \qquad  \big[ f_1, f_1 \big] = 0 \;\; ,  \cr
   \big[ h_i, e_j \big] = +a_{i,j} \, e_j \;\; ,  \qquad  \big[ h_i, f_j \big] = -a_{i,j} \, f_j \;\; ,  \qquad  \big[ e_i, f_j \big] = \delta_{i,j} \, h_i \;\; ,  \cr
   e_1^{\langle 2 \rangle} = \, 0 \;\;\; ,  \qquad \qquad  f_1^{\,\langle 2 \rangle} = \, 0 \;\;\; .  }  $$

\vskip5pt

   The subspace  $ \, \fh := \sum_{i=1}^3 \KK \, h_i \, $  is a  {\sl Cartan subalgebra\/}  of  $ \fg $  (included in  $ \fg_0 $).  The adjoint action of  $ \fh $  splits  $ \fg $  into eigenspaces, namely
 $ \; \fg  \, = \! {\textstyle \bigoplus\limits_{\alpha \in \fh^*}} \fg_\alpha \; $  with  $ \, \fg_\alpha := \big\{ x \! \in \! \fg \;\big|\, [h,x] = \alpha(h) \, x \, , \; \forall \; h \! \in \! \fh \big\} \, $  for all  $ \, \alpha \in \fh^* \, $.
 Then we define the  {\sl roots\/}  of  $ \fg $  by
 $ \; \Delta  \, := \,  \Delta_0 \coprod \Delta_1  \, = \,  \{\,\text{{\sl roots\/}  of  $ \fg $}\,\} \; $  with
  $$  \displaylines{
   \Delta_0  \, := \,  \big\{\, \alpha \in \fh^* \setminus \{0\} \;\big|\; \fg_\alpha \cap \fg_0 \not= \{0\} \big\}  \, = \,  \{\,\text{{\sl even roots\/}  of  $ \fg $} \,\}  \cr
   \Delta_1  \, := \,  \big\{\, \alpha \in \fh^* \;\big|\; \fg_\alpha \cap \fg_1 \not= \{0\} \big\}  \, = \,  \{\,\text{{\sl odd roots\/}  of  $ \fg $} \,\}
%
%
  }  $$
 Now  $ \Delta $  is called the  {\sl root system\/}  of  $ \fg \, $,  and for each root  $ \alpha $  we call  $ \fg_\alpha $  its  {\sl root space}.
%
%
 Moreover, every non-zero vector in a root space is called  {\sl root vector}.  Note that every root space is one dimensional, so any root vector forms a basis of its own root space.  Then any  $ \KK $--basis  of  $ \fh $  together with any choice of a root vector for each root will provide a  $ \KK $--basis  of  $ \, \fg = D(2,1;a) \, $.

\smallskip

   There is an even, non-degenerate, invariant bilinear form on  $ \fg \, $,  whose restriction to  $ \fh $  is in turn an invariant bilinear form on  $ \fh \, $.  We denote this form by $ \big( x, y \big) \, $,  and we use it to identify  $ \fh^* $  to  $ \fh $,  via  $ \, \alpha \mapsto H_\alpha \, $,  \, and then to define a similar form on  $ \fh^* $,  such that  $ \; \big( \alpha' , \alpha'' \big) = \big( H_{\alpha'}, H_{\alpha''} \big) \; $.  In particular, we fix normalizations so that  $ \, \alpha(H_\alpha) = 2 \, $  and  $ \, \alpha(H_{\alpha'}) = 0 \, $  for all  $ \, \alpha, \alpha' \in \Delta_0 \, $  such that  $ \, \big( \alpha \, , \alpha' \big) = 0 \, $  (in short, we adopt the normalizations as in  \cite{hu}).  Moreover, if  $ \alpha $  is any (even, odd, etc.) root we shall call the vector  $ H_\alpha $  the (even, odd, etc.)  {\sl coroot\/}  associated to  $ \alpha \, $.

\medskip

   Actually, we can describe explicitly the root system of  $ \, \fg = D(2,1;a) \, $  (after  \cite{fss}, Table 3.60) as
 $ \; \Delta = \{\, \pm 2 \, \varepsilon_1 \, , \, \pm 2 \, \varepsilon_2 \, , \, \pm 2 \, \varepsilon_3 \, , \, \pm \varepsilon_1 \pm \varepsilon_2 \pm \varepsilon_3 \,\} \, $,
 $ \; \Delta_0 = \{\, \pm 2 \, \varepsilon_1 \, , \, \pm 2 \, \varepsilon_2 \, , \, \pm 2 \, \varepsilon_3 \,\} \, $,
 $ \; \Delta_1 = \{\, \pm \varepsilon_1 \pm \varepsilon_2 \pm \varepsilon_3 \,\} \; $
where  $ \{\varepsilon_1,\varepsilon_2,\varepsilon_3\} $  is an orthogonal basis in a  $ \KK $--vector  space with inner product  $ \, (\ \,,\ ) \, $  such that  $ \; (\varepsilon_1,\varepsilon_1) = -(1+a)\big/2  \, $,  $ \; (\varepsilon_2,\varepsilon_2) = 1\big/2 \; $,  $ \; (\varepsilon_3,\varepsilon_3) = a\big/2 \, $.
 \vskip3pt
   Then we fix a distinguished system of  {\sl simple roots},  say  $ \, \{ \alpha_1 , \alpha_2 , \alpha_3 \} \, $,  namely
 $ \; \alpha_1 = \, \varepsilon_1 - \varepsilon_2 - \varepsilon_3 \; $,  $ \; \alpha_2 = \, 2 \, \varepsilon_2 \; $,  $ \; \alpha_3 = \, 2 \, \varepsilon_3 \; $
associated to this choice.  In terms of these, we call  {\sl positive\/}  the roots
  $$  \displaylines{
   2 \, \varepsilon_1 \, = \, 2 \, \alpha_1 + \alpha_2 + \alpha_3 \;\; ,  \qquad \qquad  2 \, \varepsilon_2 \, = \, \alpha_2 \;\; ,  \qquad \qquad  2 \, \varepsilon_3 \, = \, \alpha_3  \cr
   \varepsilon_1 - \varepsilon_2 - \varepsilon_3 \, = \, \alpha_1 \; ,  \quad  \varepsilon_1 + \varepsilon_2 - \varepsilon_3 \, = \, \alpha_1 + \alpha_2 \; ,   \quad  \varepsilon_1 - \varepsilon_2 + \varepsilon_3 \, = \, \alpha_1 + \alpha_3 \; ,  \quad  \varepsilon_1 + \varepsilon_2 + \varepsilon_3 \, = \, \alpha_1 + \alpha_2 + \alpha_3  }  $$
(those in first line being even, the others odd), and denote their set by  $ \, \Delta^+ \, $:  so
  $$  \Delta^+  \, = \,  \{ \alpha_1 \, , \, \alpha_2 \, , \, \alpha_3 \, , \, \alpha_1 + \alpha_2 \, , \, \alpha_1 + \alpha_3 \, , \, \alpha_1 + \alpha_2 + \alpha_3 \, , \, 2 \, \alpha_1 + \alpha_2 + \alpha_3 \}  $$
We call instead  {\sl negative\/}  the roots in  $ \, \Delta^- := -\Delta^+ \, $.  So the root system is given by  $ \; \Delta = \Delta^+ \coprod \Delta^- \; $.  We also set  $ \; \Delta_0^\pm := \Delta_0 \cap \Delta^\pm \; $,  $ \; \Delta_1^\pm := \Delta_1 \! \cap \Delta^\pm \; $.

\smallskip

   It is worth stressing at this point that the coroots  $ \, H_\alpha \in \fh \, $  associated to the positive roots are  $ \; H_{2\varepsilon_1} = {(1+a)}^{-1} \big( 2 \, h_1 - h_2 - a \, h_3 \big) \, $,  $ \; H_{2\varepsilon_2} = h_2 \; $,  $ \; H_{2\varepsilon_3} = h_3 \; $,  $ \; H_{\varepsilon_1 - \varepsilon_2 - \varepsilon_3} = h_1 \; $,  $ \; H_{\varepsilon_1 + \varepsilon_2 - \varepsilon_3} = h_1 - h_2 \; $,  $ \; H_{\varepsilon_1 - \varepsilon_2 + \varepsilon_3} = h_1 - a\,h_3 \; $,  and  $ \; H_{\varepsilon_1 + \varepsilon_2 + \varepsilon_3} = h_1 - h_2 - a\,h_3 \; $;  \; then the formula  $ \; H_{-\alpha} = H_\alpha \; $  yields coroots associated to negative roots out of those associated to positive ones.

\vskip7pt

   Now we introduce the following elements:
 \vskip-7pt
  $$  \begin{matrix}
   e_{1,2} := \big[ e_1 , e_2 \big] \; ,  &  e_{1,3} := \big[ e_1 , e_3 \big] \; ,  &  e_{1,2,3} := \big[ e_{1,2} , e_3 \big] \; ,  &  e'_{1,1,2,3} := \big[ e_1 , e_{1,2,3} \big]  \phantom{\Big|}  \\
   f_{2,1} := \big[ f_2 , f_1 \big] \; ,  &  f_{3,1} := \big[ f_3 , f_1 \big] \; ,  &  f_{3,2,1} := \big[ f_3, f_{2,1} \big] \; ,  &  f^{\,\prime}_{3,2,1,1} := \big[ f_{3,2,1} , f_1 \big]  \phantom{\Big|}
      \end{matrix}  $$
 \vskip1pt
\noindent
 All these are root vectors, respectively for the positive roots  $ \, \alpha_1 + \alpha_2 \, $,  $ \, \alpha_1 + \alpha_3 \, $,  $ \, \alpha_1 + \alpha_2 + \alpha_3 \, $  and  $ \, 2\,\alpha_1 + \alpha_2 + \alpha_3 \, $ (for the first line vectors) and similarly for the negative roots (for the second line vectors).  Moreover, by definition the generators  $ e_i $  and  $ f_i $  ($ \, i = 1,2,3 \, $)  are root vectors respectively for the roots  $ \, +\alpha_i \, $  and  $ \, -\alpha_i \, $  ($ \, i = 1,2,3 \, $).  As  $ \, \{h_1,h_2,h_3\} \, $  is a  $ \KK $--basis  of  $ \fh \, $,  we conclude that all these root vectors along with  $ h_1 \, $,  $ h_2 $  and  $ h_3 $  form a  $ \KK $--basis  of  $ \fg \, $.

\vskip4pt

   The relevant new brackets among all these basis elements   --- dropping the zero ones, those given by the very definition of  $ D(2,1;a) $,  those coming from others by (super-)skewcommuta\-tivity, and those involving the  $ h_i $'s  (given by the fact that all involved vectors are  $ \fh $--eigenvectors)  ---   are
  $$  \displaylines{
   \quad   \big[ e_1 , e_2 \big] = e_{1,2} \; ,  \quad  \big[ e_1 , e_3 \big] = e_{1,3} \; ,  \quad  \big[ e_1 , e_{1,2,3} \big] = e'_{1,1,2,3}   \hfill  \cr
   \quad   \big[ e_1 , f_{2,1} \big] = f_2 \; ,  \quad  \big[ e_1 , f_{3,1} \big] = a f_3 \; ,  \quad   \big[ e_1 , f^{\,\prime}_{3,2,1,1} \big] = -(1\!+a) f_{3,2,1}  \cr
   \quad   \big[ e_2 , e_{1,3} \big] = -e_{1,2,3} \; ,  \quad  \big[ e_2 , f_{2,1} \big] = f_1 \; ,  \quad  \big[ e_2 , f_{3,2,1} \big] = f_{3,1}   \phantom{\Big|}   \hfill  \cr
   \quad   \big[ e_3 , e_{1,2} \big] = -e_{1,2,3} \; ,  \quad  \big[ e_3 , f_{3,1} \big] = f_1 \; ,  \quad  \big[ e_3 , f_{3,2,1} \big] = f_{2,1}   \phantom{\Big|}   \hfill  \cr
   \quad   \big[ f_1 , f_2 \big] = -f_{2,1} \; ,  \quad  \big[ f_1 , f_3 \big] = -f_{3,1} \; ,  \quad  \big[ f_1 , f_{3,2,1} \big] = f^{\,\prime}_{3,2,1,1}   \hfill  \cr
   \quad   \big[ f_1 , e_{1,2} \big] = e_2 \; ,  \quad  \big[ f_1 , e_{1,3} \big] = a \, e_3 \; ,  \quad   \big[ f_1 , e'_{1,1,2,3} \big] = (1\!+a) \, e_{1,2,3}  \cr
   \quad   \big[ f_2 , f_{3,1} \big] = f_{3,2,1} \; ,  \quad  \big[ f_2 , e_{1,2} \big] = -e_1 \; ,  \quad  \big[ f_2 , e_{1,2,3} \big] = -e_{1,3}   \phantom{\Big|}   \hfill  \cr
   \quad   \big[ f_3 , f_{2,1} \big] = f_{3,2,1} \; ,  \quad  \big[ f_3 , e_{1,3} \big] = -e_1 \; ,  \quad  \big[ f_3 , e_{1,2,3} \big] = -e_{1,2}   \phantom{\Big|}   \hfill  \cr
   \quad   \big[ e_{1,2} , e_{1,3} \big] = -e'_{1,1,2,3} \;\; ,  \qquad  \big[ e_{1,2} , f_{2,1} \big] \, = \, h_1 \! - \! h_2 \;\; ,   \phantom{\Big|}   \hfill  \cr
   \hfill   \big[ e_{1,2} , f_{3,2,1} \big] = a f_3 \;\; ,  \qquad  \big[ e_{1,2} , f^{\,\prime}_{3,2,1,1} \big] = (1\!+a) f_{3,1}  \qquad  \phantom{\Big|}  \qquad
 }  $$
 \eject
  $$  \displaylines{
   \quad   \big[ e_{1,3} , f_{3,1} \big] \, = \, h_1 \! - \! a h_3 \; ,  \quad  \big[ e_{1,3} , f_{3,2,1} \big] = f_2 \; ,  \quad  \big[ e_{1,3} , f^{\,\prime}_{3,2,1,1} \big] = (1\!+a) f_{2,1}   \phantom{\Big|}   \hfill  \cr
   \quad   \big[ f_{2,1} , f_{3,1} \big] = -\!f^{\,\prime}_{3,2,1,1} \; ,  \quad  \big[ f_{2,1} , e_{1,2,3} \big] = a \, e_3 \; ,  \quad  \big[ f_{2,1} , e'_{1,1,2,3} \big] = -(1\!+a) \, e_{1,3}   \phantom{\Big|}   \hfill  \cr
   \quad   \big[ f_{3,1} , e_{1,2,3} \big] = e_2 \; ,  \quad  \big[ f_{3,1} , e'_{1,1,2,3} \big] = -(1\!+\!a) \, e_{1,2}   \phantom{\Big|}   \hfill  \cr
   \quad   \big[ e_{1,2,3} , f_{3,2,1} \big] = h_1 \! - \! h_2 \! - \! a h_3 \; ,  \quad  \big[ e_{1,2,3} , f^{\,\prime}_{3,2,1,1} \big] = -(1\!+a) f_1 \;\; ,   \phantom{\Big|}   \hfill  \cr
   \hfill   \big[ f_{3,2,1} , e'_{1,1,2,3} \big] = -(1\!+a) \, e_1 \; ,  \quad\!  \big[ e'_{1,1,2,3} , f^{\,\prime}_{3,2,1,1} \big] = -(1\!+a) \big( 2 \, h_1 \! - \! h_2 \! - \! a h_3 \big)  \cr
   \quad  e_{1,2}^{\,\langle 2 \rangle} = \, 0 \;\; ,  \quad  e_{1,3}^{\,\langle 2 \rangle} = \, 0 \;\; ,  \quad  e_{1,2,3}^{\,\langle 2 \rangle} = \, 0 \;\; ,   \qquad   f_{2,1}^{\,\langle 2 \rangle} = \, 0 \;\; ,  \quad  f_{3,1}^{\,\langle 2 \rangle} = \, 0 \;\; ,  \quad  f_{3,2,1}^{\,\langle 2 \rangle} = \, 0  }  $$
Now we modify just two root vectors taking
  $$  e_{1,1,2,3} \, := \, + {(1\!+a)}^{-1} \, e'_{1,1,2,3} \;\;\; ,  \qquad  f_{3,2,1,1} \, := \, - {(1\!+a)}^{-1} f^{\,\prime}_{3,2,1,1} \;\;\; ;  $$
(recall that  $ \, a \not= -1 \, $  by assumption); then the above formulas has to
be modified accordingly.

\medskip

   In particular now one checks that the even part of  $ \, \fg := D(2,1;a) \, $  is  $ \; \fg_0 = \rsl_2 \oplus \rsl_2 \oplus \rsl_2 \; $.  Moreover, the three triples
 $ \; \big( e_{1,1,2,3} \, , \, f_{3,2,1,1} \, , \, {(1\!+a)}^{-1} (2 \, h_1 - h_2 - a \, h_3) \big) \; $,  $ \; \big( e_2 \, , \, f_2 \, , \, h_2 \big) \; $,  $ \; \big( e_3 \, , \, f_3 \, , \, h_3 \big) \; $
are  $ \rsl_2 $--triples  inside  $ \fg \, $,  each one being associated to a (positive) even root  $ \, 2\,\varepsilon_i \, $  ($ \, i=1,2,3 \, $).

\bigskip

\section{Chevalley bases and Kostant superalgebras for  $ D(2,1;a) $}  \label{che-bas_kost_alg_D(2,1;a)}

\smallskip

   {\ } \quad   In this section we introduce the first results we shall build upon to construct Chevalley supergroups of type  $ D(2,1;a) \, $.  As before,  $ \KK $  is an algebraically closed field of characteristic zero.

\medskip

  \subsection{Chevalley bases and Chevalley Lie superalgebras}  \label{che-bas_alg}

\smallskip

   {\ } \quad   The subject of this subsection is an analogue, in the super setting, of a classical result due to Chevalley: the notion of Chevalley basis, and corrispondingly of Chevalley Lie superalgebra.  For  $ \, \fg := D(2,1;a) \, $,  \, this notion is introduced exactly like in  Definition 3.3 in  \cite{fg1},  {\sl up to changing  $ \Z $  to  $ \Z[a] \, $},  the latter being the unital subring of  $ \KK $  generated by  $ a $  (cf.~\S \ref{def_D(2,1;a)}).

\medskip

\begin{definition}  \label{def_che-bas}
 {\ }  We call  {\it Chevalley basis\/}  of  $ \; \fg := D(2,1;a) \, $  any homogeneous  $ \KK $--basis  $ \; B = {\big\{ H_i \big\}}_{1,2,3} \coprod {\big\{ X_\alpha \big\}}_{\alpha \in \Delta} \; $  of  $ \fg $ with the following properties:
 \vskip9pt
   \textit{(a)}  \quad   $ \, \big\{ H_1 , H_2 , H_3 \big\} \, $  is a  $ \KK $--basis  of  $ \fh \, $;  moreover, with  $ \, H_\alpha \! \in \fh \, $  as in  \S \ref{def_D(2,1;a)},

 \vskip4pt
   \centerline{ \qquad\qquad  $ \, \fh_{\Z[a]}  \; := \;   \text{\it Span}_{\,\Z[a]} \big( H_1 , H_2 , H_3 \big)  \; = \;  \text{\it Span}_{\,\Z[a]} \big( \{ H_\alpha \,|\, \alpha \! \in \! \Delta \}\big)  \quad $; }
 \vskip8pt
   \textit{(b)}  \hskip4pt   $ \big[ H_i \, , H_j \big] = 0 \, ,   \hskip9pt
 \big[ H_i \, , X_\alpha \big] = \, \alpha(H_i) \, X_\alpha \, ,   \hskip15pt  \forall \; i, j \! \in \! \{ 1, \dots, \ell \,\} \, ,  \; \alpha \! \in \! \Delta \; $;
 \vskip11pt
   \textit{(c)}  \hskip7pt   $ \big[ X_\alpha \, , \, X_{-\alpha} \big]  \, = \,  \sigma_\alpha \, H_\alpha  \hskip25pt  \forall \;\; \alpha \in \Delta $
 \vskip4pt
\noindent
 with  $ H_\alpha $  as in  {\it (a)},  and  $ \; \sigma_\alpha := -1 \; $  if  $ \, \alpha \in \Delta_1^- \, $,  $ \; \sigma_\alpha := 1 \; $  otherwise;
 \vskip13pt
   \textit{(d)}  \quad  $ \, \big[ X_\alpha \, , \, X_\beta \big]  \, = \, c_{\alpha,\beta} \; X_{\alpha + \beta}  \hskip17pt   \forall \;\, \alpha , \beta \in \Delta \, : \, \alpha + \beta \not= 0 \, $,  \, with
 \vskip9pt
   \hskip11pt   \textit{(d.1)} \,  if  $ \;\; (\alpha + \beta) \not\in \Delta \, $,  \; then  $ \;\; c_{\alpha,\beta} = 0 \; $,  \; and  $ \; X_{\alpha + \beta} := 0 \; $,
 \vskip7pt
   \hskip11pt   \textit{(d.2)} \,  if  $ \, \big( \alpha, \alpha \big) \not= 0 \, $  or  $ \, \big( \beta, \beta \big) \not= 0 \, $,  and if  $ \; \Sigma^\alpha_\beta := \big( \beta + \Z \, \alpha \big) \cap \big( \Delta \cup \{0\} \big) = \big\{ \beta - r \, \alpha \, , \, \dots \, , \, \beta + q \, \alpha \big\} \; $  is  {\sl the  $ \alpha $--string  through}  $ \beta \, $,  then  $ \; c_{\alpha,\beta} = \pm (r+1) \; $;
 \vskip5pt
   \hskip11pt   \textit{(d.3)} \,  if  $ \, \big( \alpha \, , \alpha \big) = \, 0 = \big( \beta \, , \beta \big) $,  \, then  $ \; c_{\alpha,\beta} = \pm \beta\big(H_\alpha\big) \; $.
\end{definition}

\smallskip

\begin{definition}
   If  $ B $  is a Chevalley basis of  $ \fg \, $,  we denote by  $ \, \fg^{\Z[a]} \, $  the  $ \Z[a] $--span of  $ B \, $,  and we call it the  {\it Chevalley Lie superalgebra\/}  (of  $ \fg $).
\end{definition}

\smallskip

\begin{remark}
 The above notions are taken from  \cite{fg1}:  in general, one should (and can) adapt them to the notion of ``Lie superalgebra'' in the stronger sense of  Definition \ref{def-Lie-salg}  (involving the ``2-ope-ration''): however, in the present case, i.e.~for  $ \, \fg = D(2,1;a) \, $,  no change is necessary.  Also, all examples of Chevalley bases considered in  \cite{fg1}, \S 3.3,  are Chevalley
 \hbox{bases in the ``stronger'' sense too.}
\end{remark}

\medskip

  \subsection{Existence of Chevalley bases}  \label{exist_Chev-bases}

\smallskip

%
%
%
   Let us consider in  $ \fg $  the generators  $ \, h_i \, $,  $ e_i \, $,  $ f_i \, $  ($ \, i=1,2,3 \, $)  and the root vectors  $ \, e_{1,2} \, $,  $ e_{1,3} \, $,  $ e_{1,2,3} \, $,  $ e_{1,1,2,3} \, $,  $ f_{2,1} \, $,  $ f_{3,1} \, $,  $ f_{3,2,1} \, $,  $ f_{3,2,1,1} \, $  constructed in  \S \ref{def_D(2,1;a)}. Looking at all brackets among them considered there, it is a routine matter to check that the set
  $$  B  \, := \,  {\big\{ H_i \, , \, e_i \, , \, f_i \big\}}_{i=1,2,3} \,{\textstyle \bigcup}\, \big\{ e_{1,2} \, , e_{1,3} \, , e_{1,2,3} \, , e_{1,1,2,3} \, , f_{2,1} \, , f_{3,1} \, , f_{3,2,1} \, , f_{3,2,1,1} \big\}  $$
with  $ \; H_1 := h_1 \, $,  $ \; H_2 := {(1\!+a)}^{-1} \big( 2 h_1 \! - \! h_2 \! - \! a h_3 \big) \, $,  $ \; H_3 := h_3 \; $,  \, is indeed a  {\sl Chevalley basis for}   $ \, \fg = D(2,1;a) \, $   --- the proof is a bookkeeping matter.
                                                                         \par
   To be precise, the root vectors in  $ B $  are:
  $$  \displaylines{
   \text{---  {\sl even\/}:}  \;\quad  X_{+(2 \, \alpha_1 + \alpha_2 + \alpha_3)} = e_{1,1,2,3} \; , \;\; X_{+\alpha_2} = e_2 \; , \;\; X_{+\alpha_3} = e_3   \qquad  \text{\sl (positive)}   \hfill  \cr
   \phantom{\text{---  {\sl even\/}:}}  \;\quad  X_{-(2 \, \alpha_1 + \alpha_2 + \alpha_3)} = f_{3,2,1,1} \; , \;\; X_{-\alpha_2} = f_2 \; , \;\; X_{-\alpha_3} = f_3   \qquad  \text{\sl (negative)}   \hfill  \cr
   \text{---  {\sl odd\/}:}  \;\quad  \text{\sl (positive)}  \qquad   X_{+\alpha_1} = e_1 \; , \;\; X_{+\alpha_1 + \alpha_2} = e_{1,2} \; , \;\; X_{+\alpha_1 + \alpha_3} = e_{1,3} \; , \;\; X_{+\alpha_1 + \alpha_2 + \alpha_3} = e_{1,2,3}  \cr
   \phantom{\text{---  {\sl odd\/}:}}  \phantom{\big|}  \!\quad  \text{\sl (negative)}  \qquad   X_{-\alpha_1} = f_1 \; , \;\; X_{-\alpha_1 - \alpha_2} = f_{2,1} \; , \;\; X_{-\alpha_1 - \alpha_3} = e_{3,1} \; , \;\; X_{-\alpha_1 - \alpha_2 - \alpha_3} = f_{3,2,1}  }  $$
while the Cartan elements in  $ B $  are just  $ \, H_1 \, $,  $ \, H_2 \, $  and  $ \, H_3 \, $  defined as above.
 See also  \cite{ik}.

\medskip

  \subsection{Kostant superalgebra}  \label{kost-form}

\smallskip

   {\ } \quad   For any  $ \KK $--algebra  $ A \, $,  given  $ \, n \in \N \, $  and  $ \, y \in A \, $  we define the  {\sl  $ n $--th  binomial coefficients}  $ \, {\Big(\! {y \atop n} \!\Big)} \, $  and the  {\sl  $ n $--th  divided power}  $ \, y^{(n)} \, $  by
%
%
 $ \,\; \displaystyle \bigg(\! {y \atop n} \!\bigg) \, := \, {\frac{\, y (y \! - \! 1) \cdots (y \! - \! n \! + \! 1) \,}{n!}} \;\, $,  $ \,\; \displaystyle y^{(n)} \, := \, y^n \!\big/ n! \;\, $.

\medskip

   Recall that  $ \, \Z[a] \, $  is the unital subring of  $ \KK $  generated by  $ a \, $.  We need also to consider the unital subring  $ \, \Z_a \, $  of  $ \KK $  generated by the subset $ \, \Big\{ \Big(\! {P(a) \atop n} \!\Big) \,\Big|\, P(a) \in \Z[a] \, , \, n \in \N \,\Big\} \; $.

  By a classical result on integer-valued polynomials (see  \cite{fg1},  Lemma 4.1) one shows that  $ \, \Z_a \, $  in fact is generated by  $ \, \Big\{ \Big(\! {a \atop n} \!\Big) \,\Big|\; n \in \N \,\Big\} \, $  as well.  Note also that  $ \, \Z_a = \Z \, $  if and only if  $ \, a \in \Z \, $.

\medskip

   Fix in  $ \, \fg := D(2,1;a) \, $  a Chevalley basis  $ \; B \, = \, \{ H_1 , H_2 , H_3 \} \, \coprod \, {\big\{ X_\alpha \big\}}_{\alpha \in \Delta} \, $  as in  Definition \ref{def_che-bas}.  Let  $ U(\fg) $  be the universal enveloping superalgebra of  $ \, \fg \, $.

\smallskip

   In   \cite{fg1},  \S 4.1, the Kostant superalgebra  $ K_\Z(\fg) $  was defined as the subalgebra of  $ U(\fg) $  generated by: divided powers of the root vectors attached to even roots, root vectors attached to odd roots, and binomial coefficients in the elements of  $ \fh_\Z \, $,  the  $ \Z $--span  of the elements of  $ \{H_1,H_2\,,H_3\} \, $.
                                                              \par
   If we try to perform the same construction  {\it verbatim\/}  for  $ \, \fg = D(2,1;a) \, $  for  $ \, a \in \KK \setminus \Z \, $,  we are soon forced to include among the generators all binomial coefficients of type  $ \, \Big(\! {H \atop n} \!\Big) \, $  with  $ \, H \in \fh_{\Z[a]} \, $,  the  $ \Z[a] $--span  of $ \{H_1\,,H_2\,,H_3\} \, $.  When commuting such binomial coefficients with divided powers, coefficients of type  $ \, \Big(\! {\alpha(H) \atop n} \!\Big) \, $  show up, where  $ \alpha $  is a root and  $ \, H \in \fh_{\Z[a]} \, $.  By construction we have  $ \, \alpha(H) \in \Z[a] \, $,  hence  $ \, \Big(\! {\alpha(H) \atop n} \!\Big) \, $  belongs to the ring  $ \, \Z_a \, $  defined above.

\medskip

   By the above remarks, for  $ \, \fg := D(2,1;a) \, $  we define the Kostant superalgebra  $ \, \kzag \, $  like we did in  \cite{fg1}  (for the other classical Lie superalgebras),  {\sl but\/}  with  $ \Z_a $
%
%
 as ground ring: more precisely,

\smallskip

\begin{definition}  {\;}
 We call  {\sl Kostant superalgebra},  or Kostant's  $ \Z_a $--form  of  $ U(\fg) \, $,  the unital  $ \Z_a $--subsuperalgebra  $ \kzag \, $  of  $ U(\fg) $  generated by all the elements
 \vskip-5pt
  $$  X_\alpha^{(n)} \! := X_\alpha^n \big/ n! \;\; ,  \quad  X_\gamma \;\; ,  \quad  \bigg({H \atop n}\bigg)   \qquad \hfill \forall \;\; \alpha \in \Delta_0 \; , \; n \in \N \, , \; \gamma \in \Delta_1 \, , \; H \in \fh_{\Z[a]}  $$
\end{definition}

\medskip

   The following analogue of  Corollary 4.2 in  \cite{fg1}  holds (with same proof), which is needed in the proof of the PBW-like theorem for  $ \kzag \, $:

\smallskip

\begin{corollary}  \label{int-polynom_kost}  {\ }
                                                    \par
    {\it (a)} \,  $ \mathbb{H}_{\Z_a} := \big\{\, h \! \in \! U(\fh) \;\big|\; h\big(z'_1,z'_2,z'_3\big) \in \Z_a , \,\; \forall \, z'_1, z'_2, z'_3 \in \Z_a \,\big\} \, $  is a  {\sl free}  $ \Z_a $--sub\-module  of  $ \, U(\fh) $,  with basis  $ \, B_{U(\fh)} := \Big\{ \Big(\! {H_1 \atop n_1} \!\Big) \Big(\! {H_2 \atop n_2} \!\Big) \Big(\! {H_3 \atop n_3} \!\Big) \,\Big|\, n_1, n_2, n_3 \! \in \! \N \Big\} \, $.
                                                    \par
   {\it (b)} \, The  $ \Z_a $--subalgebra  of  $ \, U(\fg) $  generated by all the elements  $ \, \Big(\! {{H' - z'} \atop n} \!\Big) \, $  with  $ \, H' \in \fh_{\Z_a} := \sum_{i=1}^3 \Z_a H_i \, $,  $ \, z' \in \Z_a \, $,  $ \, n \in \N \, $,  is nothing but  $ \, \mathbb{H}_{\Z_a} \, $.
\end{corollary}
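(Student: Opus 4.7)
The plan is to transcribe the argument of Corollary 4.2 in \cite{fg}, replacing $\Z$ throughout by $\Z_a$; the single new point requiring attention is the closure property $z \in \Z_a \Rightarrow \binom{z}{n} \in \Z_a$, which I establish first. Let $R^{\mathrm{int}} \subset \mathbb{Q}[x]$ denote the ring of integer-valued polynomials. Classically, $R^{\mathrm{int}}$ is the $\Z$-subalgebra generated by $\binom{x}{n}$, $n \in \N$, and is stable under $p(x) \mapsto \binom{p(x)}{n}$: indeed, if $p(k) \in \Z$ for all $k \in \Z$ then $\binom{p(k)}{n} \in \Z$ as well. The evaluation $x \mapsto a$ sends $R^{\mathrm{int}}$ onto the $\Z$-subalgebra of $\KK$ generated by $\binom{a}{n}$, $n \in \N$, which by \cite{fg}, Lemma 4.1 is precisely $\Z_a$. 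Since $p \mapsto \binom{p}{n}$ is itself polynomial, it commutes with substitution, so $\Z_a$ inherits the closure property.

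For part \textit{(a)}: since $\fh$ is abelian of rank $3$, the PBW theorem identifies $U(\fh)$ with the commutative polynomial ring $\KK[H_1, H_2, H_3]$, and the products $\binom{H_1}{n_1}\binom{H_2}{n_2}\binom{H_3}{n_3}$ form a $\KK$-basis (being unitriangular in the monomial basis $H_1^{n_1}H_2^{n_2}H_3^{n_3}$ up to the factor $1/(n_1! \, n_2! \, n_3!)$). The containment $B_{U(\fh)} \subseteq \mathbb{H}_{\Z_a}$ is exactly the closure property just established, applied componentwise. Conversely, write any $p \in \mathbb{H}_{\Z_a}$ uniquely as $p = \sum_{\vec n} c_{\vec n}\binom{H_1}{n_1}\binom{H_2}{n_2}\binom{H_3}{n_3}$ with $c_{\vec n} \in \KK$, and introduce the finite-difference operators $(\Delta_i f)(H_1, H_2, H_3) := f(\ldots, H_i + 1, \ldots) - f(\ldots, H_i, \ldots)$. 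Each $\Delta_i$ preserves $\mathbb{H}_{\Z_a}$ (because $1 \in \Z_a$) and satisfies $\Delta_i \binom{H_j}{n} = \delta_{ij}\binom{H_i}{n-1}$ via Pascal's rule; iterating and evaluating at the origin yields $c_{(n_1, n_2, n_3)} = (\Delta_1^{n_1}\Delta_2^{n_2}\Delta_3^{n_3}p)(0,0,0) \in \Z_a$, as desired.

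For part \textit{(b)}: let $\M$ denote the $\Z_a$-subalgebra of $U(\fg)$ generated by the shifted binomials $\binom{H' - z'}{n}$ for $H' \in \fh_{\Z_a}$, $z' \in \Z_a$, $n \in \N$. The inclusion $\M \subseteq \mathbb{H}_{\Z_a}$ is immediate: for $(z'_1, z'_2, z'_3) \in \Z_a^3$, the $\Z_a$-linearity of $H'$ forces $H'(z'_1, z'_2, z'_3) \in \Z_a$, hence $\binom{H' - z'}{n}$ is $\Z_a$-valued on $\Z_a^3$ by the closure property, and $\mathbb{H}_{\Z_a}$ is itself a subring of $U(\fh)$. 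For the reverse inclusion, specializing $H' = H_i$ and $z' = 0$ shows $\binom{H_i}{n} \in \M$ for each $i = 1, 2, 3$ and all $n \in \N$; since $\M$ is closed under products it contains every element of $B_{U(\fh)}$, hence contains the $\Z_a$-span of $B_{U(\fh)}$, which by \textit{(a)} is exactly $\mathbb{H}_{\Z_a}$. The only delicate point in the whole argument is the closure of $\Z_a$ under $\binom{\cdot}{n}$; once this is granted, everything else is a mechanical adaptation of \cite{fg}.
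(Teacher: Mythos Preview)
Your proof is correct and follows essentially the same approach the paper intends: the paper simply defers to Corollary~4.2 of \cite{fg} ``with same proof,'' and your write-up is precisely that adaptation, with the one genuinely new ingredient---closure of $\Z_a$ under $z \mapsto \binom{z}{n}$---correctly isolated and established via the surjection $R^{\mathrm{int}} \twoheadrightarrow \Z_a$. The difference-operator extraction of coefficients and the two inclusions in part~\textit{(b)} are exactly the standard argument one expects from \cite{fg}.
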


\medskip

  \subsection{Commutation rules and Kostant's theorem}  \label{comm-rul_kost}

\smallskip

   {\ } \quad   In  \cite{fg1}  the authors proved a ``super PBW-like'' theorem for the Kostant's superalgebra: namely, the latter is a free  $ \Z $--module  with  $ \Z $--basis  the set of ordered monomials (w.~r.~to any total order) whose factors are binomial coefficients in the  $ H_i $'s,  or  {\sl odd\/}  root vectors, or divided powers of  {\sl even\/}  root vectors.  This result follows from a direct analysis of commutation rules among the generators of the Kostant's superalgebra.
%
%
%
%
 One can perform the same for  $ D(2,1;a) \, $,  using a list of relevant ``commutation rules'' (all proved by easy induction) whose main feature is that all coefficients belong to  $ \Z_a \, $.
   We split the list into two sections:  {\it (1)\/}  relations involving only  {\sl even\/}  generators (known by classical theory);  {\it (2)\/}  relations involving also  {\sl odd\/}  generators.
 \vskip10pt

\noindent
 {\bf (1) Even generators only  {\rm  (that is  $ \, \Big(\! {H_i \atop m} \!\Big) $'s  and  $ \, X_\alpha^{(n)} $'s  only,  $ \, \alpha \in \Delta_0 \, $)}:}
 \vskip-7pt
  $$  \displaylines{
   \hfill   {\textstyle \Big({H_i \atop n}\Big)} \, {\textstyle \Big({H_j \atop m}\Big)}  \; = \;  {\textstyle \Big({H_j \atop m}\Big)} \, {\textstyle \Big({H_i \atop n}\Big)}  \hskip95pt
\forall \;\; i,j \in \{1,2,3\} \, , \;\;\; \forall \;\; n, m \in \N   \qquad  \cr
   \hfill   X_\alpha^{(n)} \, f(H)  \; = \;  f\big(H - n \; \alpha(H)\big) \, X_\alpha^{(n)}  \hfill
\forall \;\; \alpha \in \Delta_0 \, , \; H \in \fh \, , \; n \in \N \, , \; f(T) \in \KK[T]  \quad  \cr
   \hfill   X_\alpha^{(n)} \, X_\alpha^{(m)}  \; = \;  {\textstyle \Big(\! {{n \, + \, m} \atop m} \!\Big)} \, X_\alpha^{(n+m)}  \hskip93pt  \forall \; \alpha \in \Delta_0 \, , \;\; \forall \;\; n, m \in \N  \phantom{{}_{\big|}}   \hskip41pt  \cr
   \hfill   X_\alpha^{(n)} \, X_\beta^{(m)}  \; = \;  X_\beta^{(m)} \, X_\alpha^{(n)} \, + \, \textit{l.h.t}   \hskip83pt  \forall \; \alpha, \beta \in \Delta_0 \, , \;\; \forall \;\; n, m \in \N   \hskip39pt  }  $$
where  {\it l.h.t.}~stands for a  $ \Z_a $--linear  combination of monomials in the  $ X_\delta^{(k)} $'s  and in the $ \Big(\! {H_i \atop c} \!\Big) $'s  whose ``height''   --- by definition, the sum of all ``exponents''  $ k $ occurring in such a monomial ---   is less than  $ \, n+m \, $.  A  {\sl special case\/}  is
 \vskip-11pt
  $$  X_\alpha^{(n)} \, X_{-\alpha}^{(m)}  \,\; = \;\,  {\textstyle \sum_{k=0}^{min(m,n)}} \; X_{-\alpha}^{(m-k)} \, {\textstyle \Big( {{H_\alpha \, - \, m \, - \, n \, + \, 2 \, k} \atop k} \Big)} \, X_\alpha^{(n-k)}   \eqno   \forall \; \alpha \in \Delta_0 \, , \;\; \forall \;\; m, n \in \N   \quad  $$
 \vskip15pt

\noindent
 {\bf (2) Odd and even generators  {\rm  (also involving the  $ X_\gamma $'s,  $ \, \gamma \in \Delta_1 \, $)}:}
 \vskip-11pt
  $$  \displaylines{
   \hfill   X_\gamma \, f(H)  \; = \;  f\big(H - \gamma(H)\big) \, X_\gamma   \hskip65pt   \forall \;\; \gamma \in \Delta_1 \, , \; h \in \fh \, , \; f(T) \in \KK[T]   \qquad  \cr
   \hfill   X_\gamma^n \, = \, 0   \hskip135pt   \forall \;\; \gamma \in \Delta_1 \; ,  \hskip19pt  \forall \;\; n \geq 2  \qquad \qquad  \cr
   \hskip53pt   X_{-\gamma} \, X_\gamma  \; = \;  - X_\gamma \, X_{-\gamma} \, + \, H_\gamma   \hskip125pt  \forall \;\, \gamma \in \Delta_1   \qquad \qquad  \cr
   \hfill   X_\gamma \, X_\delta  \; = \;  - X_\delta \, X_\gamma \, + \, c_{\gamma,\delta} \, X_{\gamma + \delta}   \quad \hfill   \;\; \phantom{{}_{|}} \forall \;\, \gamma , \delta \in \Delta_1 \, , \; \gamma + \delta \not= 0   \hskip11pt \qquad  \cr
   \text{with  $ \, c_{\gamma,\delta} \, $  as in  Definition \ref{def_che-bas},}   \hfill  \cr
   \hfill   X_\alpha^{(n)} \, X_\gamma  \; = \;  X_\gamma \, X_\alpha^{(n)} \, + \, {\textstyle \sum_{k=1}^n} \! \, \Big( {\textstyle \prod_{s=1}^k} \, \varepsilon_s \Big) {\textstyle \Big(\! {{r \, + \, k} \atop k} \!\Big)} \, X_{\gamma + k \, \alpha} \, X_\alpha^{(n-k)}   \qquad  \forall \;\; n \in \N \, ,  \; \alpha \in \Delta_0 \, , \; \gamma \in \Delta_1   \phantom{{}_{\big|}}  \;\;
%
%
 }  $$
%
%
%
%
%
 \vskip-7pt
\noindent
 with  $ \; \sigma^\alpha_\gamma = \big\{ \gamma - r \, \alpha \, , \dots , \gamma \, , \dots , \gamma + q \, \alpha \,\big\} \; $,  $ \; X_{\gamma + k \, \alpha} := 0 \; $  if  $ \; (\gamma \! + \! k \, \alpha) \not\in \Delta \, $,  and  $ \; \varepsilon_s = \pm 1 \; $  such that  $ \; \big[ X_\alpha \, , X_{\gamma + (s-1) \, \alpha} \big] = \, \varepsilon_s \, (r+s) \, X_{\gamma + s \, \alpha} \; $.

\vskip9pt

   Here now is our super-version of Kostant's theorem for  $ K_{\Z_a}(\fg) \, $:

\vskip13pt

\begin{theorem}  \label{PBW-Kost}
 The Kostant superalgebra  $ \, \kzag \, $  is a free\/  $ \Z_a $--module.  For any given total order  $ \, \preceq \, $  of the set  $ \, {\Delta} \cup \{1\,,2\,,3\} \, $,  a  $ \, \Z_a $--basis  of  $ \kzag $  is the set  $ {\mathcal B} $  of ordered ``PBW-like monomials'', i.e.~all products (without repeti\-tions) of factors of type  $ X_\alpha^{(n_\alpha)} $,  $ \Big(\! {H_i \atop n_i} \!\Big) $  and  $ \, X_\gamma $   --- with  $ \, \alpha \in \Delta_0 \, $,  $ \, i \in \{1\,,2\,,3\} \, $,  $ \, \gamma \in \Delta_1 \, $  and  $ \, n_\alpha $,  $ n_i  \in \N $  ---   taken in the right order with respect to  $ \, \preceq \; $.
\end{theorem}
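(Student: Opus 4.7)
The proof proceeds along the two standard axes: showing that the set $\mathcal{B}$ of ordered PBW-like monomials spans $\kzag$ as a $\Z_a$-module, and showing $\Z_a$-linear independence. Both parts follow the pattern of the corresponding result in \cite{fg} (Kostant's theorem for classical Lie superalgebras over $\Z$), with the ring $\Z$ replaced by $\Z_a$; the new content is the verification that the extra parameter $a$ produces coefficients that still lie in $\Z_a$ rather than only in $\KK$.

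For the spanning statement, I would define a \emph{height} on monomials in the generators of $\kzag$ by summing the divided-power exponents of even root vectors, the number of odd root-vector factors, and the binomial indices $n_i$ attached to the Cartan elements. Using the commutation relations catalogued in \S\ref{comm-rul_kost}, any monomial can be rewritten, modulo \emph{lower-height terms} (\emph{l.h.t.}), as a $\Z_a$-linear combination of ordered monomials: the relations $X_\alpha^{(n)}X_\beta^{(m)} = X_\beta^{(m)}X_\alpha^{(n)} + \mathit{l.h.t.}$, the explicit formula for $X_\alpha^{(n)}X_{-\alpha}^{(m)}$, the ``straightening'' $X_\alpha^{(n)}X_\gamma = X_\gamma X_\alpha^{(n)} + \sum_k \cdots$, the anticommutator rule for odd root vectors, and $X_\gamma^2 = 0$ together allow an induction on height (with any fixed total order $\preceq$) that terminates with an expression supported on $\mathcal{B}$. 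Moving Cartan factors past root vectors produces polynomial expressions in $\alpha(H)$ with $H \in \fh_{\Z[a]}$, and to ensure that the resulting binomial coefficients $\binom{\alpha(H)}{n}$ still lie in $\Z_a$ one invokes Corollary~\ref{int-polynom_kost}, which identifies the subalgebra of such binomials inside $U(\fh)$ as $\mathbb{H}_{\Z_a}$ with $\Z_a$-basis $B_{U(\fh)}$.

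For linear independence, I would exploit the base-change $\kzag \otimes_{\Z_a} \KK \hookrightarrow U(\fg)$. The set $\mathcal{B}$, viewed inside $U(\fg)$, differs from an ordinary ordered PBW monomial basis of $U(\fg)$ (as a Lie superalgebra, with even part treated classically) only by nonzero rational scalars $1/n!$ on the divided powers and by an upper-triangular change of basis on the Cartan binomials $\binom{H_i}{n_i}$ (relative to ordinary monomials $H_1^{n_1}H_2^{n_2}H_3^{n_3}$). The classical super-PBW theorem for $U(\fg)$ therefore yields $\KK$-linear independence of $\mathcal{B}$, which in turn forces $\Z_a$-linear independence. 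Combined with the spanning step, this proves $\mathcal{B}$ is a free $\Z_a$-basis.

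The main technical obstacle is the bookkeeping of coefficients during the straightening: one must verify that every coefficient produced by commuting a Cartan binomial $\binom{H_i}{n_i}$ past a root-vector factor, and every coefficient produced by the even/even and even/odd commutation formulas, actually belongs to $\Z_a$ rather than only to $\Z[a]$ or to $\KK$. This is precisely why the ground ring has to be enlarged from $\Z$ (as in \cite{fg}) to $\Z_a$, and why Corollary~\ref{int-polynom_kost} is indispensable: it guarantees that integer-valued polynomial expressions in $\Z[a]$-arguments land in $\Z_a$, closing the induction. Once this point is secured, the remainder of the argument is a routine transcription of the proof from \cite{fg}.
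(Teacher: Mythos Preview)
Your proposal is correct and follows essentially the same approach as the paper, which simply states that the result ``is proved exactly like the similar one in \cite{fg}, making use of the commutation relations considered above.'' You have spelled out the two halves of that argument---the height-induction straightening for spanning, and the base-change to $\KK$ plus classical super-PBW for linear independence---and correctly identified Corollary~\ref{int-polynom_kost} as the device guaranteeing that all coefficients stay in $\Z_a$; the only minor discrepancy is that the paper's notion of ``height'' counts just the divided-power exponents on root vectors (not the Cartan binomial indices), but this does not affect the argument.
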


\vskip9pt

   This result is proved like the similar one in  \cite{fg1},  making use of the commutation relations conside\-red above.  It also has a direct consequence, again proved like in  \cite {fg1}.  To state it, let first consider  $ \fg_1^{\Z[a]} \, $,  the odd part of  $ \, \fg^{\Z[a]} \, $:  it has  $ \, \big\{ X_\gamma \,\big|\, \gamma \! \in \! \Delta_1 \big\} \, $  as  $ \Z[a] $--basis,  by construction.  Then let  $ \, \bigwedge \fg_1^{\Z[a]} \, $  be the exterior\/  $ \Z[a] $--algebra  over  $ \fg_1^{\Z[a]} \, $,  and  $ \, \bigwedge \fg_1^{\Z_a} \, $  be its scalar extension to  $ \Z_a \, $.  Let also  $ K_\Z(\fg_0) $  be the classical Kostant's algebra of  $ \fg_0 $  (over  $ \Z $)  and let  $ K_{\Z_a}\!(\fg_0) $  be its scalar extension to  $ \Z_a \, $.  Then the tensor factorization  $ \; U(\fg) \, \cong \, U(\fg_0) \otimes_{\KK} \bigwedge \fg_1 \; $  (see  \cite{vsv})  has the following ``integral version'':

\vskip13pt

\begin{corollary}  \label{kost_tens-splitting} {\ }
 There exists an isomorphism of  $ \, \Z_a $--modules  $ \,\; \kzag \, \cong \, K_{\Z_a}\!(\fg_0) \otimes_{\,\Z_a} {\textstyle \bigwedge} \fg_1^{\Z_a} \;\, $.
%
%
%
\end{corollary}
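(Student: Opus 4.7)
The plan is to deduce this directly from Theorem~\ref{PBW-Kost}. First I would fix a total order $\preceq$ on $\Delta \cup \{1,2,3\}$ in which every element of $\{1,2,3\} \cup \Delta_0$ precedes every element of $\Delta_1$ (the internal order within each block is irrelevant). By Theorem~\ref{PBW-Kost}, the resulting set $\mathcal{B}$ of ordered PBW-like monomials is then a $\Z_a$-basis of $\kzag$.

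Next I would observe that each $M \in \mathcal{B}$ factors uniquely as $M = M_0 \cdot M_1$, where $M_0$ is a product of factors of the form $X_\alpha^{(n_\alpha)}$ ($\alpha \in \Delta_0$) and $\binom{H_i}{n_i}$ ($i \in \{1,2,3\}$), while $M_1$ is an ordered product of pairwise distinct odd root vectors $X_\gamma$ ($\gamma \in \Delta_1$). Distinctness in the odd factor is forced by the relation $X_\gamma^n = 0$ for $n \geq 2$ from \S\ref{comm-rul_kost}. As $M$ varies over $\mathcal{B}$, the even factor $M_0$ ranges exactly over a $\Z_a$-basis $\mathcal{B}_0$ of the $\Z_a$-subalgebra $A_0 \subseteq \kzag$ generated by even divided powers and Cartan binomials, and the odd factor $M_1$ ranges over the standard $\Z_a$-basis of $\bigwedge \fg_1^{\Z_a}$ --- namely the ordered products of distinct $X_\gamma$'s.

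Using these identifications I would then define
$$\Phi \, : \, K_{\Z_a}(\fg_0) \, \otimes_{\Z_a} \, {\textstyle \bigwedge}\, \fg_1^{\Z_a} \, \longrightarrow \, \kzag$$
by sending a tensor $(\text{classical PBW monomial}) \otimes (X_{\gamma_1} \wedge \cdots \wedge X_{\gamma_r})$, with $\gamma_1 \prec \cdots \prec \gamma_r$, to the corresponding product $M_0 \cdot X_{\gamma_1} \cdots X_{\gamma_r}$ inside $\kzag$, and extend by $\Z_a$-linearity. Since $\Phi$ carries a $\Z_a$-basis of the domain bijectively onto $\mathcal{B}$, it is a $\Z_a$-module isomorphism.

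The hard part of this argument --- and hence the main obstacle --- is the identification $A_0 = K_{\Z_a}(\fg_0)$ underlying the third step. To establish it one must match the $\Z_a$-subalgebra $A_0$ of $\kzag$ with the scalar extension to $\Z_a$ of the classical Kostant $\Z$-algebra of $\fg_0$. On the root-vector part this is automatic, since both sides are generated by the same divided powers $X_\alpha^{(n_\alpha)}$ for $\alpha \in \Delta_0$; the non-trivial check concerns the Cartan part, where one must compare the $\Z_a$-algebra generated by the binomials $\binom{H_i}{n_i}$ (appearing in $A_0$, with the $H_i$'s a $\Z[a]$-basis of $\fh_{\Z[a]}$) with the classical algebra of integer-valued Cartan polynomials on the $\fg_0$-coroot lattice spanned by $\{H_{2\varepsilon_1}, H_{2\varepsilon_2}, H_{2\varepsilon_3}\} = \{H_2, h_2, H_3\}$. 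This identification can be carried out by appealing to Corollary~\ref{int-polynom_kost} together with the explicit $\Z[a]$-linear expressions of the $\fg_0$-coroots in terms of $\{H_1, H_2, H_3\}$ recorded in \S\ref{def_D(2,1;a)}, after which the classical Kostant PBW theorem applied to $\fg_0$ closes the argument just as in \cite{fg}.
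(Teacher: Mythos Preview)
Your overall strategy---deduce the module isomorphism directly from the PBW theorem (Theorem~\ref{PBW-Kost}) by choosing an order with the even block before the odd block and matching bases---is exactly the route the paper has in mind; it states the corollary as a direct consequence of Theorem~\ref{PBW-Kost}, proved ``like in \cite{fg}''.

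However, the step you flag as ``the hard part'', namely the identification $A_0 = K_{\Z_a}(\fg_0)$ as the \emph{same} $\Z_a$--subalgebra of $U(\fg_0)$, is both unnecessary and in fact problematic. The even coroots are $H_{2\varepsilon_1}=H_2$, $H_{2\varepsilon_2}=h_2=2H_1-(1{+}a)H_2-aH_3$, $H_{2\varepsilon_3}=H_3$, so conversely $H_1=\tfrac{1}{2}\big((1{+}a)H_{2\varepsilon_1}+H_{2\varepsilon_2}+aH_{2\varepsilon_3}\big)$. For generic $a$ (e.g.\ $a$ transcendental) one has $\tfrac{1}{2}\notin\Z_a$---specialise $a\mapsto 0$ to see this---so $H_1\notin K_{\Z_a}(\fg_0)$ and the inclusion $A_0\subseteq K_{\Z_a}(\fg_0)$ you need actually fails. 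In particular the ``natural'' multiplication map $K_{\Z_a}(\fg_0)\otimes_{\Z_a}\bigwedge\fg_1^{\Z_a}\to\kzag$ is not surjective.

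This does not damage the corollary, which only asserts an isomorphism of $\Z_a$--\emph{modules}. By the classical Kostant PBW theorem, $K_{\Z_a}(\fg_0)$ is free over $\Z_a$ with basis the ordered monomials in $X_\alpha^{(n_\alpha)}$ ($\alpha\in\Delta_0$) and $\binom{H_{2\varepsilon_j}}{m_j}$, so $K_{\Z_a}(\fg_0)\otimes_{\Z_a}\bigwedge\fg_1^{\Z_a}$ is free with basis indexed by $\N^{|\Delta_0|}\times\N^3\times\{0,1\}^{|\Delta_1|}$. By Theorem~\ref{PBW-Kost}, $\kzag$ is free over $\Z_a$ with basis $\mathcal{B}$ indexed by the \emph{same} set. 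Any bijection of these index sets (for instance the obvious one matching the $\binom{H_{2\varepsilon_j}}{m_j}$--factor with the $\binom{H_j}{m_j}$--factor) yields the desired $\Z_a$--module isomorphism. So drop the attempted algebra identification and simply biject the two PBW bases; the rest of your argument then goes through cleanly.
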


\vskip7pt

\begin{remarks}
 {\it (a)}  Following a classical pattern (and  cf.~\cite{bk},  \cite{bku},  \cite{sw}  in the super context) we can define the  {\sl superalgebra of distributions\/}  $ \, {\mathcal D}\textit{ist}\,(G) \, $  on any supergroup  $ G \, $.
 Then  $ \; {\mathcal D}\textit{ist}\,(G) = \kzag \otimes_{\Z_a} \! \KK \; $,  when  $ \; \fg := \textit{Lie}\,(G) \; $  is just  $ \, D(2,1;a) \, $.
 \vskip2pt
   {\it (b)}  All the above proves that the assumptions of Theorem 2.8 in  \cite{sw}  hold for any supergroup  $ G $  with tangent Lie superalgebra   $ \, \fg = D(2,1;a) \, $.  Thus  {\sl all results in  \cite{sw}  do apply to such supergroups}.
\end{remarks}

\bigskip

\section{Chevalley supergroups of type  $ D(2,1;a) $}  \label{che-sgroup}

\smallskip

   {\ } \quad   I present now the construction of affine supergroups associated to the Lie superalgebra  $ \, \fg = D (2,1;a) \, $.  The method, inspired to Chevalley's original one (dealing with complex semisimple Lie algebras), follows closely the one presented in  \cite {fg1}  for the other classical Lie superalgebras   --- including  $ \, \fg = D (2,1;a) \, $  when  $ \, a \in \Z \, $.  However, the occurrence of the (possibly non-integral) parameter $ a $  demands to revisit that construction and to introduce some suitable, delicate modifications.

\medskip

\subsection{Admissible lattices}  \label{adm-lat}

\smallskip

   {\ } \quad   Let  $ \KK $  be an algebraically closed field of characteristic zero.  If  $ R $  is a uni\-tal subring of  $ \KK \, $,  and  $ V $  a finite dimensional  $ \KK $--vector  space, any  $ \, M \! \subseteq \! V \, $  is called  $ R $--{\it lattice\/}  (or  $ R $--form)  of  $ V $  if  $ \, M \! = {\textit{Span}}_R(B) \, $  for some  $ \KK $--basis  $ B $  of  $ V $.
%
%
   Let  $ \, \fg = D(2,1;a) \, $  be defined over  $ \KK \, $,  and fix the ring  $ \Z_a \, $,  a Chevalley basis  $ B $  of  $ \fg $  and the Kostant algebra  $ \kzag $  as in  \S \ref{che-bas_kost_alg_D(2,1;a)}.

\vskip5pt

   The following definition and results are just slight variations of those in  \cite{fg1},  \S 5.1.

\vskip9pt

\begin{definition}
 Let  $ V $  be a  $ \fg $--module,  and let  $ M $  be a  $ \Z_a $--lattice  of it.
 \vskip7pt
\noindent
 \hskip7pt  {\it (a)} \,  We call  $ V $  {\it rational\/}  if:
 \vskip3pt
 \hskip1pt  {\it (a-1)} \,  $ \, \fh_{\Z[a]} := \text{\it Span}_{\Z[a]} \big( H_1, H_2, H_3 \big) \, $  acts diagonally on  $ V $  with eigenvalues in  $ \Z[a] \, $;  in other words, one has  $ \, V \! = \! \bigoplus_{\mu \in \fh^*} \! V_\mu \, $,  with  $ \, V_\mu := \big\{ v \! \in \! V \,\big|\, h.v = \mu(h) \, v \; \forall \, h \! \in \! \fh \big\} \, $,  {\sl and}  $ \, \mu(H_i) \in \Z[a] \, $  for all  $ i $  and all  $ \mu $  such that  $ \, V_\mu \not= \{0\} \, $;
 \vskip3pt
 \hskip1pt  {\it (a-2)} \,  $ \, \mu(H_\alpha) \in \Z \, $  for all  $ \, \alpha \in \Delta_0 \, $  and  $ \, \mu \in \fh^* \, $  such that  $ \, V_\mu \not= \{0\} \, $.
 \vskip6pt
\noindent
 \hskip7pt  {\it (b)\/}  We call  $ M $  {\it admissible (lattice)\/}  if it is  $ \kzag $--stable.
\end{definition}

\vskip7pt

\begin{theorem}
  Any rational, finite dimensional, semisimple  $ \fg $--module  $ V $  contains an admissible lattice  $ M $.  Any such  $ M $  is the (direct) sum of its weight components,
%
%
 i.e.~$ \, M = \bigoplus\limits_{\mu \in \fh^*} \big( M \cap V_\mu \big) \, $.
\end{theorem}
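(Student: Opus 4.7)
The plan is to mirror the argument for the analogous result in \cite{fg}, replacing the ground ring $\Z$ by $\Z_a$ and exploiting two tools already established: the tensor factorization of Corollary \ref{kost_tens-splitting} and the integer-valued polynomial statement of Corollary \ref{int-polynom_kost}. The two assertions---existence of an admissible lattice, and weight-space decomposition of any such lattice---are essentially independent and I would treat them in turn.

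For existence, I would first reduce to the simple case: semisimplicity of $V$ gives a decomposition $V = \bigoplus_k V_k$ into simple rational $\fg$-submodules, and a direct sum of admissible lattices in the $V_k$ is admissible in $V$. For a simple rational $V_k$, I would pick a highest weight vector $v_k^+$ (with respect to the distinguished Borel subalgebra of \S \ref{def_D(2,1;a)}) and set $M_k := \kzag \cdot v_k^+$. The $\kzag$-stability is tautological, so the issue is to show $M_k$ is a $\Z_a$-lattice, i.e.\ finitely generated over $\Z_a$ and spanning $V_k$ over $\KK$. The spanning property is immediate since $\kzag \otimes_{\Z_a} \KK = U(\fg)$ and $V_k$ is cyclic on $v_k^+$. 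For finite generation, I would use Corollary \ref{kost_tens-splitting} to write $M_k = \sum_I K_{\Z_a}(\fg_0) \cdot (X_I \cdot v_k^+)$, the sum being over the finitely many ordered monomials $X_I$ in the odd root vectors; the classical Kostant theorem for the semisimple Lie algebra $\fg_0 = \rsl_2^{\oplus 3}$---applied over $\Z$ (using the integrality condition \textit{(a-2)} on $\fg_0$-weights) and extended to $\Z_a$---shows each summand is finitely generated over $\Z_a$.

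For the weight decomposition, let $M$ be an admissible lattice and $m \in M$. Writing $m = \sum_{\mu \in S} m_\mu$ with $S \subset \fh^*$ finite and $m_\mu \in V_\mu \setminus \{0\}$, I must show each $m_\mu$ lies in $M$. The operator $\bigl(\!{H_i \atop n}\!\bigr)$ lies in $\kzag$ and acts on $V_\mu$ by the scalar $\bigl(\!{\mu(H_i) \atop n}\!\bigr) \in \Z_a$, using condition \textit{(a-1)} and the definition of $\Z_a$. The triples $\bigl(\mu(H_1), \mu(H_2), \mu(H_3)\bigr) \in \Z[a]^{\,3}$ are pairwise distinct for distinct $\mu \in S$ (since they determine $\mu|_\fh$), so Corollary \ref{int-polynom_kost} produces, for each fixed $\mu_0 \in S$, an element $P_{\mu_0} \in \mathbb{H}_{\Z_a} \subset \kzag$ acting on $V_\mu$ ($\mu \in S$) as the scalar $\delta_{\mu,\mu_0}$. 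Applying $P_{\mu_0}$ to $m$ and using admissibility of $M$ yields $m_{\mu_0} = P_{\mu_0} \cdot m \in M$, which gives the decomposition $M = \bigoplus_\mu (M \cap V_\mu)$.

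The main technical obstacle is precisely the construction of these projection operators $P_{\mu_0}$ within $\kzag$: one needs integer-valued polynomial interpolation on finitely many points of $\Z[a]^{\,3}$, and the resulting interpolating polynomial need not have coefficients in $\Z[a]$---only in $\Z_a$. This is exactly why $\Z_a$ (rather than $\Z[a]$) is the correct ground ring for the Kostant superalgebra in the case $a \not\in \Z$, and the whole argument hinges on Corollary \ref{int-polynom_kost} describing $\mathbb{H}_{\Z_a}$ via binomial coefficients in the $H_i$'s. Once this is in hand, both halves of the theorem reduce to the corresponding bookkeeping from \cite{fg} with $\Z_a$ substituted for $\Z$ throughout.
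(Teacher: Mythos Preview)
Your overall strategy---build the lattice as $\kzag\cdot v^+$ on each simple summand, reduce finite generation to the classical $\fg_0$ case via Corollary~\ref{kost_tens-splitting}, and separate weight components by Cartan operators---is exactly the line the paper invokes by pointing to \cite{fg}. The existence half is fine.

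The weight-decomposition half, however, has a genuine gap. You claim Corollary~\ref{int-polynom_kost} ``produces'' a projector $P_{\mu_0}\in\mathbb{H}_{\Z_a}$ separating any finite set of points in $\Z[a]^3$, but that corollary only \emph{describes} $\mathbb{H}_{\Z_a}$; it does not provide interpolation, and in fact interpolation at arbitrary points of $\Z[a]$ can fail. For transcendental $a$ there is no one-variable $P\in\mathbb{H}_{\Z_a}$ with $P(0)=1$ and $P(a)=0$: writing $P=\sum_k c_k\binom{H}{k}$ forces $c_0=1$ and $\sum_{k\ge1}c_k\binom{a}{k}=-1$, but every element of the ideal $(\binom{a}{k}:k\ge1)\subset\Z_a$ maps to $0$ under the evaluation-at-$0$ homomorphism $\text{Int}(\Z)\to\Z$ (identifying $\Z_a\cong\text{Int}(\Z)$), so $-1$ is not in that ideal. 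In short, passing from $\Z[a]$ to $\Z_a$ does not by itself rescue Lagrange interpolation.

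What actually makes the argument work---and what you did not use---is condition \textit{(a-2)}. The even coroots $H_{2\varepsilon_1}=H_2$, $H_{2\varepsilon_3}=H_3$, and $H_{2\varepsilon_2}=2H_1-(1+a)H_2-aH_3$ all lie in $\fh_{\Z[a]}$ (so their binomial coefficients are in $\kzag$), they span $\fh$, and by \textit{(a-2)} every weight takes \emph{integer} values on them. Distinct weights therefore give distinct points in $\Z^3$, and the classical Steinberg projector trick (peeling off extremal weights by applying $\binom{c-H_{2\varepsilon_i}}{d}$ for suitable $c,d\in\N$) goes through unchanged. This is precisely why \textit{(a-2)} is part of the rationality hypothesis; your sketch relies only on \textit{(a-1)}.
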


\vskip-3pt

\begin{theorem}  \label{stabilizer}
   Let  $ V $  be a rational, finite dimensional\/  $ \fg $--module,  $ M $  an admissible lattice of  $ \, V $,  and  $ \, \fg_V \! = \! \big\{ X \! \in \! \fg \,\big|\, X.M \subseteq \! M \big\} \, $.  If  $ \, V $  is faithful, then
 \vskip-8pt
  $$  \fg_V  \, = \,  \fh_V \; {\textstyle \bigoplus \big( \oplus_{\alpha \in \Delta}} \, \Z_a \, X_{\alpha} \big) \; ,  \;\quad  \fh_V \, := \, \big\{ H \in \fh \;\big|\; \mu(H) \in \Z_a \, , \; \forall \; \mu \in \Lambda \big\}  $$
 \vskip-1pt
\noindent
 where  $ \Lambda $  is the set of all weights of  $ \, V $.  In particular,  $ \fg_V $  is a  $ \Z_a $--lattice  in  $ \fg \, $,  independent of the choice of the admissible lattice  $ M $  (but not of  $ \, V $).
\end{theorem}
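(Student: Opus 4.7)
The plan is to establish the two inclusions separately, both ultimately relying on the weight decomposition $M = \bigoplus_{\mu\in\fh^*} M_\mu$ provided by the previous theorem and on the admissibility of $M$.

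For the inclusion $\supseteq$, each root vector $X_\alpha$ is, by construction, among the generators of $\kzag$, so admissibility gives $X_\alpha.M \subseteq M$, whence $\Z_a\,X_\alpha \subseteq \fg_V$. Likewise, for any $H \in \fh_V$ and any weight vector $v \in M_\mu$ one has $H.v = \mu(H)\,v$ with $\mu(H) \in \Z_a$ by the very definition of $\fh_V$, so $H.v \in \Z_a\cdot v \subseteq M_\mu \subseteq M$; using $M = \bigoplus_\mu M_\mu$ one concludes $\fh_V \subseteq \fg_V$. The sum $\fh_V + \bigoplus_\alpha \Z_a X_\alpha$ is automatically direct since the ambient decomposition $\fg = \fh \oplus \bigoplus_{\alpha \in \Delta} \KK X_\alpha$ already is.

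For $\subseteq$, I would take $X \in \fg_V$ and expand it in the above root-space decomposition of $\fg$ as $X = H + \sum_{\alpha\in\Delta} c_\alpha X_\alpha$, with $H \in \fh$ and $c_\alpha \in \KK$. Applying $X$ to a weight vector $v \in M_\mu$ and matching the weight components inside $X.v \in M = \bigoplus_\nu M_\nu$, one separates out the two conditions
$$\mu(H)\,v \,\in\, M_\mu \qquad \text{and} \qquad c_\alpha\, X_\alpha.v \,\in\, M_{\mu+\alpha} \quad (\alpha \in \Delta).$$
Letting $v$ run over a $\Z_a$-basis of the free lattice $M_\mu$ and $\mu$ run over all weights of $V$, the first condition forces $\mu(H) \in \Z_a$ for every $\mu \in \Lambda$, i.e.\ $H \in \fh_V$.

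The main obstacle is then the second condition: to deduce $c_\alpha \in \Z_a$ from $c_\alpha\,X_\alpha.v \in M_{\mu+\alpha}$, one needs a pair $(\mu,v)$ for which $X_\alpha.v$ is itself part of a $\Z_a$-basis of $M_{\mu+\alpha}$. Such a pair exists because $V$ is faithful, so $X_\alpha$ acts nontrivially on some irreducible $\fg$-constituent of $V$, and because the $\kzag$-stability of $M$ together with the commutation rules of \S\ref{comm-rul_kost}  --- in particular, the divided-power action of $X_\alpha^{(n)}$ for $\alpha \in \Delta_0$ and the action of $X_\gamma$ for $\gamma \in \Delta_1$ ---  makes the $\alpha$-string action of $X_\alpha$ on $M$ entirely integral over $\Z_a$, so that an extremal-weight choice inside that constituent exhibits the required primitive $X_\alpha.v$. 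The argument is formally identical to the corresponding step in \cite{fg}, \S 5.1, with $\Z$ replaced by $\Z_a$ throughout. The independence of $\fg_V$ from the admissible lattice $M$ is then automatic, since the final description involves only the weight set $\Lambda$ and the root system $\Delta$.
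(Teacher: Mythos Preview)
Your proposal is correct and takes essentially the same approach as the paper, which in fact gives no explicit argument at all but simply declares that this result (together with the preceding one) is ``proved like there'', referring to \cite{fg}, \S 5.1, with $\Z$ replaced by $\Z_a$. Your sketch of the two inclusions, the separation by weight components, and the appeal to faithfulness plus extremal $\alpha$-string weights to extract the primitivity of $X_\alpha.v$ is exactly the standard line of reasoning from that reference, and you even flag this yourself.
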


\medskip

   From now on, we retain the following notation:  $ V $  is a rational, finite dimensional  $ \fg $--module,  and  $ M $  is an admissible lattice of  $ V \, $.  Also,  {\sl we assume  $ \bk $  to be a commutative unital  $ \Z_a $--algebra}.
 \vskip4pt
   With these assumptions, we set
%
%
 $ \; \fg_\bk \, := \, \bk \otimes_{\Z_a} \! \fg_V \; $,  $ \; V_\bk \, := \, \bk \otimes_{\Z_a} \! M \; $,  $ \; U_\bk(\fg) \, := \, \bk \otimes_{\Z_a} \! \kzag \; $;  \,
so  $ \fg_\bk $  acts faithfully on  $ V_\bk \, $,  yielding an embedding of  $ \fg_\bk $  into  $ \rgl(V_\bk) \, $.  For any  $ \, A \in \salg_\bk \, $,  the Lie superalgebra  $ \, \fg_A := A \otimes_\bk \fg_\bk \, $  acts faithfully on  $ \, V_\bk(A) := A \otimes_\bk V_\bk \, $,  so it embeds into  $ \rgl\big(V_\bk(A)\!\big) \, $,  etc.

\bigskip

\subsection{Additive one-parameter supersubgroups}  \label{add_one-param-ssgroups}

\vskip3pt

   {\ } \quad   Let  $ \, \alpha \in \Delta_0 \, $,  $ \, \beta \in \Delta_1 \, $,  and let  $ X_\alpha \, $,  $ X_\beta $  be the associated root vectors (in our fixed Chevalley basis of  $ \fg \, $).  Both  $ X_\alpha \, $  and  $ X_\beta $  act as nilpotent operators on  $ V \, $,  hence on  $ M \, $  and  $ V_\bk \, $,  so they are represented by nilpotent matrices in  $ \rgl(V_\bk(A)) \, $;  the same holds for all operators
  $$  t \, X_\alpha \; ,  \;\; \vartheta \, X_\beta \,\; \in \;
\End\big(V_\bk(A)\big)   \eqno \forall \;\; t \in A_0 \, ,  \; \vartheta \in A_1 \;\, .   \qquad (4.1)  $$
 Of course we have  $ \, Y^{(n)} := Y^n \! \big/ n! \in \big(\kzg\big)(A) \, $  for any  $ Y $  as in (4.1) and  $ \, n \in \N \, $;  moreover,  $ \, Y^{(n)} = 0 \, $  for  $ \, n \gg 0 \, $,  by nilpotency.  Thus the formal power series  $ \; \exp(Y) := \sum_{n=0}^{+\infty} \, Y^{(n)} \; $,  when computed for  $ Y $  as in (4.1), gives a well-defined element in  $ \rGL\big(V_\bk(A)\big) \, $,  expressed as finite sum.

\begin{definition}  \label{def_add_one-par-ssgrp}
 \;  Let  $ \, \alpha \! \in \! \Delta_0 \, $,  $ \, \beta \! \in \! \Delta_1 \, $,  and  $ X_\alpha \, $,  $ X_\beta \, $  as above.  We define the supergroup functors  $ x_\alpha \, $  and  $ x_\beta \, $  from  $ \salg $  to  $ \grps $  as
  $$  \begin{array}{rl}
   x_\alpha(A) \!\!\!  &  :=  \big\{ x_\alpha(t) \! := \exp\!\big( t X_\alpha \big) \big|\, t \! \in \! A_0 \big\}  \, = \,  \big\{ \big( 1 + t \, X_\alpha + {t \over 2!} \, X_\alpha^{\,2} + \cdots \big) \big|\, t \! \in \! A_0 \big\}  \\
  \\
   x_\beta(A) \!\!\!  &  :=  \big\{ x_\beta(\vartheta) \! := \exp\!\big( \vartheta X_\beta \big) \,\big|\, \vartheta \in A_1 \big\}  \, = \,  \big\{ \big( 1 +  \vartheta \, X_\beta \big) \,\big|\, \vartheta \in A_1 \,\big\}
      \end{array}  $$
   \indent   For later convenience we also write  $ \, x_\zeta(\bt) := 1 \, $  when  $ \zeta  $  belongs to the  $ \Z $--span  of  $ \Delta $  but  $ \, \zeta \not\in \Delta \, $.
\end{definition}

\vskip5pt

   Like in  \cite{fg1},  Proposition 5.8{\it (a)},  one sees that these supergroup functors are in fact representable, hence they both are  {\sl affine supergroups\/}:  namely,  $ x_\alpha $  is represented by  $ \bk[x] $  and  $ x_\beta $  by  $ \bk[\xi] \, $.
%
%
 We shall refer to both  $ x_\alpha $  and  $ x_\beta $  as  {\sl additive one-parameter (super)subgroups}.

\medskip

\subsection{Multiplicative one-parameter supersubgroups of
\allowbreak classical type}  \label{mult_one-param-ssgroups_cl-type}

\smallskip

   {\ } \quad   For any  $ \, \alpha \in \Delta_0 \, \big(\! \subseteq \fh^* \big) \, $,  let  $ \, H_\alpha \in \fh^0_\Z \, $  be the corresponding coroot  (cf.~\S \ref{def_D(2,1;a)}).  Then consider  $ \, \fh^0_\Z := \text{\it Span}_{\,\Z} \big( \big\{ H_\alpha \,\big|\, \alpha \! \in \! \Delta_0 \big\}\big) \, $;  clearly this is a  $ \Z $--form  of  $ \fh \, $,  and by definition we have  $ \, \fh^0_\Z \subseteq \fh_{\Z[a]} := \text{\it Span}_{\,\Z[a]} \big( H_1, H_2, H_3 \big) \, $.  Let  $ \, V = \oplus_\mu V_\mu \, $  be the splitting of  $ V $  into weight spaces; as  $ V $  is rational, we have  $ \; \mu(H_\alpha) \in \Z \; $  for all  $ \, \alpha \in \Delta_0 \, $  and  $ \, \mu \in \fh^* : V_\mu \not= \{0\} \, $.  Now, for any  $ \, A \in \salg \, $,  $ \, \alpha \in \Delta_0 \, $  and  $ \, t \in U(A_0) \, $  (the group of invertible elements in  $ A_0 \, $)  set  $ \; h_\alpha(t).v  \, := \,  t^{\mu(H_\alpha)} \, v  \; $  for all  $ \, v \in V_\mu \, $,  $ \, \mu \in \fh^* \, $:  this defines another operator (also locally expressed by exponentials)
 \vskip-7pt
  $$  h_\alpha(t) \, \in \, \rGL\big(V_\bk(A)\big)   \eqno \forall \;\; t \in U(A_0) \, , \; \alpha \in \Delta_0 \;\; .  \qquad (4.2)  $$

More in general, if  $ \, \big\{ H_{\alpha_i} \big\}_{i=1,2,3} \, $  is any basis  of  $ \Delta_0 $  and  $ \, H = \sum_{i=1}^3 z_i H_{\alpha_i} \, $  (with  $ z_1, z_2, z_3 \in \Z \, $)  then we define  $ \; h_H(t) := \prod_{i=1}^3 {\big( h_{\alpha_i}(t) \big)}^{z_i} \; $,  \, for  $ \, \alpha \in \Delta_0 \, $.

\vskip11pt

\begin{definition}  \label{def_mult_one-par-ssgrp_cl-type}
 \;  Let  $ \, H \in \fh^0_\Z \, $  as above.  We define the supergroup functor  $ h_H \, $  (also writing  $ \; h_\alpha := h_{H_\alpha} \; $  for any  $ \, \alpha \in \Delta_0 \, $)  from  $ \salg $  to  $ \grps $  as
%
 $ \; A \, \mapsto \, h_H(A) \, := \, \big\{\, t^H := h_H(t) \;\big|\; t \in \! U(A_0) \big\} \; $.
 Like in  \cite{fg1},  Proposition 5.8{\it (b)},  one sees that these functors are represen\-table, so they are affine supergroups; even more, they are also closed subgroups of the diagonal subgroup of  $ \rGL\big(V_\Bbbk(A)\big) \, $.
%
%
%
%
\end{definition}

\medskip

\subsection{Multiplicative one-parameter supersubgroups of
\allowbreak $ a $--type}  \label{mult_one-param-ssgroups_a-type}

\smallskip

   {\ } \quad   In order to attach a suitable ``multiplicative one-parameter supersubgroup'' to  {\sl any\/}  element in  $ \, \fh_{\Z[a]} := \text{\it Span}_{\,\Z[a]} \big( H_1, H_2, H_3 \big) \, $   --- not only in  $ \fh^0_\Z $  ---   we need to adapt our previous construction.

\vskip4pt

   Consider a Cartan element  $ H_i $  in our fixed Chevalley basis: we want to define a suitable, representable supergroup functor associated to it, to be called  $ \, h^{[a]}_i \, $.  Given any  $ \, A \in \salg \, $,  and  $ \, t \in U(A_0) \, $,  we look for an operator like  $ \; h^{[a]}_i(t) := t^{H_i} \in \rGL\big(V_\bk(A)\big) \; $.  This should be given by
  $$  h^{[a]}_i(t)  \; := \;  t^{H_i}  \; = \;  {\big( 1 + (t \! - \! 1) \big)}^{H_i}  \; = \;\,  {\textstyle \sum_{n=0}^{+\infty}} \, {(t \! - \! 1)}^n \, {\textstyle \Big(\! {H_i \atop n} \!\Big)}  $$
   Let  $ \, V = \bigoplus_{\mu \in \fh^*} \! V_\mu \, $  be the splitting of  $ V $  into weight spaces; definitions imply
  $$  h^{[a]}_i(t)\Big|_{V_\mu}  \, = \;\,  {\textstyle \sum_{n=0}^{+\infty}} \, {(t \! - \! 1)}^n \, {\textstyle \Big(\! {\mu(H_i) \atop n} \!\Big)} \, \text{\sl id}_{V_\mu}  \; = \;\,  t^{\mu(H_i)} \, \text{\sl id}_{V_\mu}  $$
on weight spaces, which makes sense   --- and then globally yields a well-defined operator on all of  $ \, V = \bigoplus_{\mu \in \fh^*} \! V_\mu \, $  ---   as soon as  $ \; t^{\mu(H_i)} := \sum_{n=0}^{+\infty} \, {(t \! - \! 1)}^n \,\Big(\! {\mu(H_i) \atop n} \!\Big) \; $  is a well-defined element of  $ A \, $.  Now,  $ V $  is rational and  $ M $  is admissible,  so  $ \, \mu(H_i) \in \Z[a] \, $.  Thus a necessary condition we may require is
  $$  t^{z(a)}  \, := \;  {\textstyle \sum_{n=0}^{+\infty}} \, {(t \! - \! 1)}^n \, {\textstyle \Big(\! {z(a) \atop n} \!\Big)}  \, \in \,  A   \qquad  \forall \;\; z(a) \in \Z[a]   \eqno (4.3)  $$
which in the end is equivalent to having
  $$  t^{\pm a^k}  \, := \;  {\textstyle \sum_{n=0}^{+\infty}} \, {(t \! - \! 1)}^n \, {\textstyle \Big(\! {{\pm a^k} \atop n} \!\Big)}  \, \in \,  A   \qquad  \forall \;\; k \in \N   \eqno (4.4)  $$
Both (4.3) and (4.4) must be read as conditions defining a suitable subset of  $ A_0 \, $,  namely that of all elements  $ \, t \in A_0 \, $  for which the condition does hold.
   Now we go and fix details, as to give a well-defined meaning to the expressions in (4.3--4) and to the just sketched construction.

\smallskip

   Consider the polynomial  $ \Z_a $--superalgebra  $ \, \Z_a[\,\ell\,] \, $,  where  $ \ell $  is an  {\sl even\/}  indeterminate: this is also a (super)bialgebra, with  $ \, \varDelta(\ell) = \ell \otimes \ell \, $,  $ \, \epsilon(\ell) = 1 \, $.  Let  $ \, \Z_a[[\,\ell-\!1]] \, $  be the  $ (\ell\!-\!1) $--adic  completion of  $ \Z_a[\,\ell\,] \, $;  this contains also an inverse of  $ \ell \, $,  namely  $ \, \ell^{-1} = \sum_{n=0}^{+\infty} \, {(-1)}^n \, {(\ell \! - \! 1)}^n \, $,  so  $ \; \Z_a\big[\, \ell \, , \ell^{-1} \big] \subseteq \Z_a[[\,\ell-\!1]] \; $.  The coproduct of  $ \Z_a[\,\ell\,] $  extends to  $ \Z_a[[\,\ell-\!1]] $  making it into a  {\sl formal\/} bialgebra   --- the coproduct taking values into the  $ (\ell\!-\!1) $--adic  completion of the algebraic tensor square of  $ \Z_a[[\,\ell-\!1]] $  ---   which indeed is a  {\sl formal Hopf algebra}.  In the latter, both  $ \ell $  and  $ \ell^{-1} $  are group-like elements, so that $ \Z_a\big[\, \ell \, , \ell^{-1} \big] $  is a (non formal) Hopf subalgebra of  $ \Z_a[[\,\ell\!-\!1]] \, $.  Consider the elements
  $$  {} \qquad   \ell^{\,\pm a^k}  \, := \,  {\textstyle \sum_{n=0}^{+\infty}} \, {(\, \ell - \! 1)}^n \, {\textstyle \Big(\! {{\pm a^k} \atop n} \!\Big)}  \; \in \;  \Z_a[[\,\ell\!-\!1]]   \eqno \forall \;\; k \in \N  \qquad  (4.5)  $$
 A key property of these elements is the following:

\vskip11pt

\begin{lemma}  \label{l^a_group-like}
 All the  $ \ell^{\,\pm a^k} $'s  ($ \, k \in \N $)  are group-like elements of  $ \, \Z_a[[\,\ell\!-\!1]] \, $.
\end{lemma}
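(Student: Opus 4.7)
The plan is to promote the would-be group-like identity to a formal identity with polynomial coefficients in an auxiliary variable $z$, and then specialize $z := \pm a^k$. Writing $u := \ell - 1$, the fact that $\ell$ is itself group-like gives $\; \varDelta(u) = u \otimes 1 + 1 \otimes u + u \otimes u \;$ and $\epsilon(u) = 0$; in particular the counit condition $\, \epsilon\big(\ell^{\pm a^k}\big) = \Big(\!{\pm a^k \atop 0}\!\Big) = 1 \,$ is immediate, and only the coproduct identity $\, \varDelta\big(\ell^z\big) = \ell^z \otimes \ell^z \,$ needs an argument.

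First I would introduce the ``universal'' formal series $\; L(z) := \sum_{n \geq 0} \Big(\!{z \atop n}\!\Big)\, u^n \; \in \; \mathbb{Q}[z]\,[[\,u\,]] \;$. Substituting $\varDelta(u)$ into $\varDelta(L(z))$ and expanding $L(z) \otimes L(z)$ as a double series in $u \otimes 1$ and $1 \otimes u$, the coefficient of $u^i \otimes u^j$ on each side of the asserted equality $\, \varDelta(L(z)) = L(z) \otimes L(z) \,$ is a finite $\Z$--linear combination of binomial polynomials $\Big(\!{z \atop n}\!\Big)$, hence a polynomial in $z$ with rational coefficients.

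Second, when $z = m \in \N$ the series $L(m)$ collapses, by the ordinary binomial theorem, to the polynomial $(1+u)^m = \ell^m$, which is group-like as an integer power of the group-like element $\ell$. The coefficient-wise identity therefore holds for every $m \in \N$, and so, being a polynomial identity in $z$ over the infinite domain $\mathbb{Q}$, it holds identically in $\mathbb{Q}[z]$. Specializing $z := \pm a^k$, each coefficient of $u^i \otimes u^j$ becomes the same $\Z$--linear combination but now evaluated at the numerical binomials $\Big(\!{\pm a^k \atop n}\!\Big)$, which lie in $\Z_a$ by the very definition of $\Z_a$ recalled in \S\ref{kost-form}. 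The identity therefore descends to $\Z_a[[\,\ell\!-\!1]] \,\widehat{\otimes}\, \Z_a[[\,\ell\!-\!1]]$, proving that $\ell^{\pm a^k}$ is group-like.

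The only technical delicacy is confirming that the formal manipulation of $\varDelta(L(z))$ is legitimate: one must check that the rearrangements of the double series in $u \otimes 1$ and $1 \otimes u$ converge in the $(u \otimes 1,\, 1 \otimes u)$--adic topology, so that coefficient extraction is well-defined. This is already built into the formal Hopf algebra structure on $\Z_a[[\,\ell\!-\!1]]$ recalled immediately before the lemma, so it is not a genuine obstacle; the real content of the argument is the polynomial-identity reduction together with the classical fact that every integer power of $\ell$ is group-like.
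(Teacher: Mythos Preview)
Your argument is correct. The core trick---promoting the exponent to a formal variable $z$, verifying the group-like identity coefficient-wise for all $z \in \N$ via the ordinary binomial theorem, and then invoking a polynomial-identity principle to specialize $z := \pm a^k$---is sound, and your check that each coefficient of $u^i \otimes u^j$ involves only finitely many $\binom{z}{n}$'s (since $(u \otimes 1 + 1 \otimes u + u \otimes u)^n$ contributes to bidegree $(i,j)$ only when $n \leq i+j$) is exactly what makes the comparison legitimate.

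The paper takes a different route: it fixes $a \in \mathbb{C}$, identifies $\sum_n \binom{\pm a^k}{n}(\ell-1)^n$ with the Taylor expansion of the analytic function $\ell \mapsto \ell^{\pm a^k}$ near $\ell = 1$, uses the multiplicativity of that function to deduce the group-like identity for complex $\ell$, and then lifts this to a formal power-series identity; a second specialization argument (``holds for all $a \in \mathbb{C}$, hence for a formal parameter $a$, hence for any $a \in \KK$'') transports the result to the general ground field. Both proofs ultimately rest on the same principle---an identity among formal series with polynomial coefficients that holds for infinitely many parameter values holds identically---but you apply it directly to the exponent variable over $\mathbb{Q}$, while the paper detours through complex analysis before applying it to the parameter $a$. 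Your version is more elementary and self-contained; the paper's version has the minor conceptual advantage of explaining \emph{why} the identity should hold (namely, it is the formal shadow of a genuine functional equation).
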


\begin{proof}
 When  $ \ell $  belongs to  $ \mathbb{C} $  and  $ \, a \in \mathbb{C} \setminus \{0,-1\} \, $,  the power series  $ \; {\textstyle \sum\limits_{n=0}^{+\infty}} {(\ell \! - \! 1)}^n \, {\textstyle \Big(\! {{\pm a^k} \atop n} \!\Big)} \; $  represents the Taylor expansion of the analytic function  $ \; \phi_a \! : \ell \mapsto \ell^{\,\pm a^k} \; $  in a neighbourhood of  $ \ell_0 := 1 \, $.  Now, the function  $ \phi_a $  is multiplicative, i.e.~$ \; \phi_a\big( \ell_1 \, \ell_2 \big) = \phi_a(\ell_1) \, \phi_a(\ell_2) \; $;  this identity for all complex values of $ \ell_1 $  and  $ \ell_2 $  in a neighbourhood of 1 implies (passing through Taylor expansion) a similar identity at the level of power series.  In turn, the latter identity implies an identity among  {\sl formal power series\/}  (i.e., still holding when complex numbers  $ \ell_1 $,  $ \ell_2 $  are replaced with indeterminates).  This can be recast saying that the formal power series  $ \, {\textstyle \sum_{n=0}^{+\infty}} \, {(\, \ell \! - \! 1)}^n \, {\textstyle \Big(\! {{\pm a^k} \atop n} \!\Big)} \, $  is a group-like element in  $ \, \Z_a[[\,\ell\!-\!1]] \, $.  As this holds for  {\sl any}  $ \, a \in \mathbb{C} \, $,  it must hold for a formal parameter  $ a \, $,  i.e.~if the complex value of  $ a $  is replaced by an indeter\-minate.  Then the formal symbol  $ a $  can be replaced by any genuine value in  $ \, \KK \setminus \{0,-1\} \, $,  and the formal series in  $ \, \Z_a[[\,\ell\!-\!1]] \, $  will still be group-like.
\end{proof}

\vskip9pt

\begin{free text}  \label{aff_sgrps-P_a}
 {\bf The affine (super)groups  $ P_a \, $.}  Let  $ \, \cL_a := \Z_a\big[ \big\{\ell^{\,\pm a^k}\big\}_{k \in \N} \,\big] \, $  be the  $ \Z_a $--subalgebra  of  $ \, \Z_a[[\,\ell\!-\!1]] \, $  generated by all the  $ \ell^{\,\pm a^k} $'s:  as these generators are group-like,  $ \Z_a\big[ \big\{\ell^{\,\pm a^k}\big\}_{k \in \N} \,\big] $  is in fact a Hopf sub(super)algebra of  $ \, \Z_a[[\,\ell\!-\!1]] \, $.  In particular, it is a (totally even) Hopf algebra over  $ \Z_a \, $.  If now  $ \bk $  is any (unital, commutative)  $ \Z_a $--algebra  as above, then  $ \; \cL_{a,\bk} := \bk \otimes_{\Z_a} \Z_a\big[ \big\{\ell^{\,\pm a^k}\big\}_{k \in \N} \,\big] =: \bk\big[ \big\{\ell^{\,\pm a^k}\big\}_{k \in \N} \,\big] \;\, \big( \subseteq \bk \otimes_{\Z_a} \Z_a[[\,\ell\!-\!1]] \,\big) \, $  belongs to  $ \, \salg_\bk = \salg \, $,  and in addition it is a (commutative) Hopf algebra over  $ \bk \, $.
                                                         \par
   Let  $ \, P_a := \uspec\big(\cL_{a,\bk}\big) \, $  be the affine scheme associated to  $ \cL_{a,\bk} \, $:  as  $ \cL_{a,\bk} $  is a Hopf algebra,  $ P_a $  is indeed an affine group-scheme, which we can also see as a (affine) supergroup.  As usual, we shall identify  $ \, P_a := \uspec\big(\cL_{a,\bk}\big) \, $  with its functors of points, namely  $ \, P_a = \Hom_{\salg_\bk}\big( \cL_{a,\bk} \, , \text{---} \,\big) \, $.  Similarly, we identify  $ \, U := \uspec\big(\bk\big[ \ell \, , \ell^{-1} \big] \big) \, $  with its functor of points, so that for any  $ \, A \in \salg_\bk \, $  we have that  $ \, U(A) = \Hom_{\salg_\bk}\big( \bk\big[ \ell \, , \ell^{-1} \big] \, , A \big) \, $  is the set of units of  $ A_0 \, $.  Then the natural embedding of  $ \, \bk\big[ \ell \, , \ell^{-1} \big] \, $  into  $ \, \cL_{a,\bk} = \bk\big[ \big\{\ell^{\,\pm a^k}\big\}_{k \in \N} \,\big] \, $  yields a (super)group morphism  $ \, \pi_a : P_a \! \longrightarrow U \, $.
                                                          \par
   Given  $ \, A \in \salg_\bk \, $,  any  $ \, \varphi \in P_a(A) = \Hom_{\salg_\bk}\big( \cL_{a,\bk} \, , A \big) \, $   --- an algebra morphism from  $ \cL_{a,\bk} $  to  $ A $  ---   is uniquely determined by a double sequence in  $ \in A \, $,  namely  $ \, \undt := {\big( t_k^+ \, , t_k^- \big)}_{k \in \N} \, $  with  $ \, t_k^\pm := \varphi\big(\ell^{\pm a^k}\big) \, $;  as  $ \, t_k^\mp = {\big( t_k^\pm \big)}^{-1} \, $,  just one half of this double sequence is actually enough to determine  $ \varphi \, $.  Similarly, any  $ \, \phi \in U(A) \, $  is uniquely determined by  $ \, t := \phi(\ell) \, $.  Via these identifica\-tions, the morphism  $ \, \pi_a : P_a \! \longrightarrow U \, $  is described by  $ \, \pi_a\big( {\big( t_k^+ \, , t_k^- \big)}_{k \in \N} \big) = t_0^+ \; $.
                                                          \par
   Note that when  $ \, a \in \Z \, $  one has  $ \, P_a = U \, $,  by the very definitions, and  $ \pi_a $  is the identity.

\vskip5pt

   {\sl $ \underline{\text{N.B.}} $:}  In some cases,  $ \, \varphi \in P_a(A) \, $  is uniquely determined by its image  $ \, \pi_a(\varphi) \, $  in  $ U(A) \, $;  under mild assumptions, this may happen for all elements of  $ P_a(A) \, $  so that  $ \pi_a $  turns out to be injective and  $ P_a(A) $  identifies with a subgroup of  $ U(A) \, $.  Indeed, given  $ \, \varphi \in P_a(A) \, $  and  $ \, t := \pi_a(\varphi) \in U(A) \, $,  let  $ \, \widehat{A} \, $  be the  $ (t-\!1) \, $--adic  completion of  $ A \, $,  and let  $ \, A \,{\buildrel {j_t} \over \longrightarrow}\, \widehat{A} \, $  be the natural morphism from  $ A $  to  $ \widehat{A} \, $.  Then there exists a unique morphism  $ \, \varphi': \bk[\,\ell-1] \longrightarrow A \, $  given by  $ \, \varphi'(\,\ell-1) = t-1 \, $;  in turn, this uniquely yields  $ \, \widehat{\varphi}: \bk[[\,\ell-1]] \longrightarrow \widehat{A} \, $  such that  $ \, \widehat{\varphi}(\,\ell-1) = t-1 \, $:  finally, the restriction of  $ \widehat{\varphi} $  to  $ \cL_{a,\bk} $  necessarily coincides with the composition  $ \; j_t \circ \varphi : \cL_{a,\bk} \,{\buildrel {\varphi\,} \over \longrightarrow}\, A \,{\buildrel {j_t} \over \longrightarrow}\, \widehat{A} \;\, $.  Thus  $ \, t = \pi_a(\varphi) \, $  uniquely determines  $ \, j_t \circ \varphi \, $:  in particular, if  $ \, \varphi, \psi \in P_a(A) \, $  and  $ \, \pi_a(\varphi) = t = \pi_a(\psi) \, $,  then  $ \, j_t \circ \varphi = j_t \circ \psi \, $.  Thus if in addition  $ \, A \,{\buildrel {j_t} \over \longrightarrow}\, \widehat{A} \, $  is injective, then  $ \, \varphi = \psi \; $,  i.e.~$ \, t := \pi_a(\varphi) \, $  is enough to determine  $ \varphi \, $.
\end{free text}

\vskip7pt

\begin{notation}
  In the following, we shall identify any  $ \, \varphi \in P_a(A) \, $  by its corresponding double sequence  $ \, \undt \, $  (as above) and any  $ \, \phi \in U(A) \, $  by  $ \, t := \phi(\ell) \; $:  thus we shall write  $ \undt $  for  $ \varphi $  and  $ t $  for  $ \phi \; $.
                                                              \par
   Given  $ \, A \in \salg_\bk \, $  and  $ \, \undt \in P_a(A) \, $,  for any  $ \, z(a) = \sum_k z_k a^k \in \Z[a] \, $  we shall use notation  $ \, \undt^{z(a)} := \prod_k {\big( t_k^+ \big)}^{z_k} \, $   --- which is just  $ \, \varphi\big( \ell^{z(a)} \big) = \varphi\Big( \prod_k {\big( \ell^{a^k} \big)}^{z_k} \Big) \, $  if  $ \, \undt = \varphi \, $,  with  $ \, \ell^{z(a)} := \prod_k {\big( \ell^{a^k} \big)}^{z_k} \, $.
\end{notation}

\smallskip

\begin{remarks}  \label{remarks-P_a}
 {\it (a)} \,  By (4.5) it is easy to see   --- using notation of  \S \ref{preliminaries}  ---   that for any  $ \, A \in \salg \, $  one has   $ \; P_a(A) \supseteq \big( 1 + \mathfrak{N}(A_0) \big) \supseteq \big( 1 + A_1^{\,2} \big) \; $.
 \vskip3pt
   {\it (b)} \,  It is clear that  $ \, P_a(\mathbb{C}) = \mathbb{C}^* \! = U(\mathbb{C}) \, $,  with  $ \, \mathbb{C}^* := \mathbb{C} \setminus \{0\} \, $.
%
%
   Now assume  $ \, \KK = \mathbb{C} \, $  and  $ \, A = \mathbb{C} [x_1, \dots, x_m, \xi_1, \dots, \xi_n] \, $, where the  $ x_i $'s  and the  $ \xi_j $'s  respectively are even and odd indeterminates.  Letting  $ (\xi_1,\dots,\xi_n) $  be the ideal generated by the  $ \xi_i $'s,  one has  $ \; P_a \big( \mathbb{C} [x_1, \dots, x_m, \xi_1, \dots, \xi_n] \big) \, = \, \mathbb{C}^* + {(\xi_1,\dots,\xi_n)}^2 \; $;  in particular,
 $ \; P_a \big( \mathbb{C} [x_1, \dots, x_m, \xi_1, \dots, \xi_n] \big) = \,
U \big( {\mathbb{C} [x_1, \dots, x_m, \xi_1, \dots, \xi_n]}_0 \big) \; $.
%
%
                                                          \par
   By the same argument, one has also  $ \, P_a(A) = U(A) \, $   --- that is,  $ \pi_a $  is the identity ---   for all those  $ \, A \in \salg_{\mathbb{C}} \, $  such that  $ \, U(A) = \mathbb{C}^* \! + \mathfrak{N}(A_0) \, $,  where  $ \mathfrak{N}(A_0) $  is the nilradical of  $ A_0 \, $.   \hfill  $ \diamondsuit $
\end{remarks}

\smallskip

   Now let  $ \, V = \bigoplus_{\mu \in \fh^*} \! V_\mu \, $  and  $ M $  be as above.  For any  $ \, H \in \fh_{\Z[a]} := \text{\it Span}_{\,\Z[a]} \big(H_1,H_2,H_3\big) \, $,  $ \, A \in \salg \, $  and  $ \, \undt \in P_a(A) \, $,  the formula
 \vskip-4pt
  $$  h^{[a]}_H(\undt)  \, := \,  \undt^H  \, = \,  {\textstyle \sum_{n=0}^{+\infty}} \, {(\,t - \! 1)}^n \, {\textstyle \Big(\! {H \atop n} \!\Big)}   \eqno \text{with \ }  \, t := t_0^+   \qquad \qquad  $$
yields a well defined element of  $ \rGL\big(V_\bk(A)\big) \, $,  whose action is  $ \; h^{[a]}_H(\undt).v \, := \, \undt^{\mu(H)} \, v \; $  for all  $ \, v \in V_\mu \, $,  $ \, \mu \in \fh^* : V_\mu \not= \{0\} \, $ (this makes sense, since  $ \, \mu(H) \in \Z[a] \, $).  In particular, we shall write  $ \, h^{[a]}_i(\undt) := h^{[a]}_{H_i}(\undt) \, $  for  $ \, i = 1, 2, 3 \, $;  thus if  $ \, H = \sum_{i=1}^3 z_i(a) H_i \in \fh_{\Z[a]} := \text{\it Span}_{\,\Z[a]}\big(H_1,H_2,H_3\big) \, $   --- with  $ \, z_i(a) \in \Z[a] \, $  for all  $ i \, $  ---   one has  $ \; h^{[a]}_H(t) \, = \, h^{[a]}_1(\undt) \, h^{[a]}_2(\undt) \, h^{[a]}_3(\undt) \; $  for all  $ \, \undt \in P_a(A) \, $.

\vskip9pt

\begin{definition}  \label{def_mult_one-par-ssgrp_a-type}
 \;  Let  $ \, H \in \fh_{\Z[a]} \, $  as above.  Consider the morphism  $ \, \widehat{h}^{\,[a]}_H : P_a \longrightarrow \rGL\big(V_\bk\big) \, $   given on objects by  $ \, P_a(A) \,{\buildrel {\widehat{h}^{\,[a]}_H(A)} \over {\relbar\joinrel\relbar\joinrel\relbar\joinrel\longrightarrow}}\, \rGL\big(V_\bk(A)\big) \, $,  $ \, \undt \mapsto h^{[a]}_H(\undt) \, $  (its definition on arrows then should be clear).  We define the supergroup functor  $ h^{[a]}_H $  from  $ \salg $  to  $ \grps $  as being the image of  $ \, \widehat{h}^{\,[a]}_H \; $:  in particular it is given on objects by
 $ \; A \, \mapsto \, h^{[a]}_H(A) := \big\{ h^{[a]}_H(\undt) := \undt^H \;\big|\; \undt \in \! P_a(A) \big\} \; $.
\end{definition}

\vskip3pt

\begin{remark}  \label{representability-P_a}
 {\sl Representability of the functors  $ \, h^{[a]}_H \, $}.
                                                          \par
   By construction, the kernel  $ \, \mathcal{K}_a := \text{\it Ker}\,\big(\widehat{h}^{\,[a]}_H\big) \, $  of  $ \, \widehat{h}^{\,[a]}_H $  (notation as above) is the closed subgroup of  $ P_a $  defined by the ideal  $ \, I(\mathcal{K}_a) \, $  given as follows.  If  $ \, \mu \in \fh^* \, $  and  $ \, \mu(H) = \sum_k z_{\mu,k} \, a^k \in \Z[a] \, $  for some  $ \, z_k \in \Z \, $,  then  $ \, I(\mathcal{K}_a) = \Big( \big\{ \prod_k \big( \ell^{a^k} \big)^{z_{\mu,k}} \! - 1 \,\big|\; \mu \in \fh^* : V_\mu \not= \{0\} \,\big\} \Big) \, $.  As the functor  $ P_a $  is represented by  $ \cL_{a,\bk} \, $,  it follows that its quotient  $ \, h^{[a]}_H \cong P_a \big/ \mathcal{K}_a \, $  is represented by the Hopf algebra  $ \, {\cL_{a,\bk}}^{\text{\it co-}I(\mathcal{K}_a)} := \big\{\, f \in \cL_{a,\bk} \,\big|\, \big( \Delta(f) - f \otimes 1 \big) \in \cL_{a,\bk} \otimes I(\mathcal{K}_a) \big\} \, $  of  {\sl right  $ I(\mathcal{K}_a) $--coinvariants\/}  of  $ \cL_{a,\bk} \, $.  In particular,  {\sl the supergroup functor  $ h^{[a]}_H $  is representable, hence it is itself an affine supergroup}.
                                                            \par
   In the following we shall call the  $ h^{[a]}_H $'s  {\sl multiplicative one-parameter (super)subgroups of  $ a $--type}.
\end{remark}

\medskip

\subsection{Construction of Chevalley supergroups}  \label{const-che-sgroup}

\smallskip

   {\ } \quad   In order to define our Chevalley supergroups, we first need the definition of a suitable  {\sl algebraic group\/}  $ \bG_0 $  associated to  $ \, \fg = D(2,1;a) \, $  and  $ \, V $.

\smallskip

   First, for each  $ \, A \! \in \! \salg \, $  consider the subgroup  $ \, G'_0(A) := \Big\langle h_H(A) \, , \, x_\alpha(A) \,\Big|\, H \! \in \! \fh^0_\Z \, , \, \alpha \! \in \! \Delta_0 \Big\rangle $  generated in  $ \rGL\big(V_\bk(A)\big) $  by the one-parameter supersubgroups  $ h_H(A) \, $  and  $ x_\alpha(A) \, $   ---  $ \, H \! \in \fh^0_\Z \, $,  $ \, \alpha \in \Delta_0 \, $.  Overall, this yields a group functor  $ G'_0 $  defined on  $ \salg $,  which clearly factor through  $ \alg = \alg_\Bbbk \, $,  the category of unital commutative  $ \bk $--algebras.  This  $ G'_0 $  is a presheaf  (cf.~\cite{fg1}, Appendix, Definition A.5),  hence we can define the functor  $ \bG'_0 $  as the sheafification of  $ G'_0 \, $:  on  {\sl local\/}  algebras   --- in  $ \alg $  ---   the functor  $ \bG'_0 $  coincides with the functor of points of the  {\sl classical\/}  Chevalley group-scheme associated with the semisimple Lie algebra  $ \fg_0 $   --- isomorphic to  $ \, {\mathfrak{sl}(2)}^{\oplus 3} \, $  ---   and the  $ \fg_0 $--module  $ V \, $: in particular,  $ \bG'_0 $  is representable.  Inside  $ G'_0 \, $,  the  $ h_H $'s  generate the subgroup  $ \; T'(A_0) := \big\langle\, h_H(A) \;\big|\, H \! \in \! \fh^0_\Z \,\big\rangle \, $,  yielding another supergroup functor which also factors through  $ \alg = \alg_\Bbbk \, $;  its sheafification  $ \bT' $  coincides with  $ T' $  itself, and is a maximal torus in  $ \bG'_0 \, $.

\smallskip

   Second, we consider the subgroup
%
%
 $ \; T(A) \, := \, \Big\langle\, \Big\{ h^{[a]}_H(A) \;\Big|\; H \! \in \! \fh_{\Z[a]} \,\Big\} \, {\textstyle \bigcup} \; T'(A) \;\Big\rangle \; $  of  $ \rGL\big(V_\bk(A)\big) $,
which for various  $ A $  in  $ \salg $  yields another (sub)group functor  $ \; T : \salg \longrightarrow \grps \; $   --- also factoring through  $ \alg \, $:  like above, we can consider also the sheafification  $ \bT $  of  $ T \, $.
                                                                  \par
   For later use, note that  $ \, \bT' \leq \bG'_0 \leq \rGL\big(V_\bk\big) \, $  and  $ \, \bT' \leq \bT \leq \rGL\big(V_\bk\big) \, $,  with  $ \, \bG'_0 \cap \bT = \bT' \, $.

\vskip9pt

\begin{free text}  \label{descr_T^[a]}
 {\bf Description of  $ T^{[a]} \, $.}  Let us spend a few more words in order to describe  $ T $  and  $ \bT \, $:  indeed, we shall show that  $ T $  is representable, so that  $ \, \bT = T \, $.
                                                                  \par
   Let us write  $ \, T^{[a]} := \big\langle\, h^{[a]}_1 , \, h^{[a]}_2 , \, h^{[a]}_3 \,\big\rangle \, $  for the (supersub)group functor generated by  $ \, h^{[a]}_1 $,  $ \, h^{[a]}_2 $  and  $ \, h^{[a]}_3 $,  so that  $ \, T = \big\langle\, T^{[a]} \cup T' \,\big\rangle \, $.  It is clear by construction that  $ \, T^{[a]} $  is a direct product  $ \, T^{[a]} \cong h^{[a]}_1 \times h^{[a]}_2 \times h^{[a]}_3 \, $,  just like  $ \, T' \cong U \times U \times U \, $,  and both these groups commute with each other inside  $ \, \rGL\big(V_\bk\big) \, $;  also, it is clear that the morphism  $ \, \pi_a : P_a \longrightarrow U \, $  uniquely induces a similar morphism  $ \, \pi_a : \mathcal{H}^{[a]} \longrightarrow T' \, $.  It follows that  $ \, T = \big\langle\, T^{[a]} \cup T' \,\big\rangle \, $  can be seen as the fibered product  $ \, T \! \times_{{}_{T'}} \! T' \, $  of  $ \, T^{[a]} $  and  $ T' $  with respect to the pair of morphisms  $ \, T^{[a]} \,{\buildrel {\pi_a} \over {\relbar\joinrel\longrightarrow}}\; T' \,{\buildrel {\;\;\text{\sl id}_{T'}} \over {\longleftarrow\joinrel\relbar}}\, T' \, $.  We shall now realize this fibered product in concrete terms.

\smallskip

   Take on  $ \, T^{[a]} \! \times T' \, $  the direct product structure.  Recall that  $ T $  is representable, hence it coincides with its sheafification  $ \bT $;  similarly we see by construction that  $ T^{[a]} $  is representable too, so for its sheafification  $ \bT^{[a]} $  we have  $ \, \bT^{[a]} = T^{[a]} \, $.  It follows that  $ \, T^{[a]} \times T' = \bT^{[a]} \times \bT' \, $  is representable too. The fibered product  $ \, T^{[a]} \! \times_{{}_{T'}} \! T' \, $  is a quotient of the above direct product.  Indeed, let
 $ \, K(A_0) := \big\{ \big( x, y \big) \,\big|\, x \in T^{[a]}(A_0) , \, y \in T'(A_0) , \, \pi_a(x) = y^{-1} \big\} \, $
for any  $ \, A \in \salg \, $,  so that  $ \; A \mapsto K(A_0) \; $  defines   --- on  $ \salg $,  through  $ \text{(alg)} $  ---   a (normal) subgroup functor  $ K $  of  $ \; T^{[a]} \! \times T' \; $:  then we have a (functor) isomorphism
 $ \; T \, \cong \, \big( T^{[a]} \! \times T' \big) \Big/ K \; $.
 In addition, let  $ \mathbf{K} $  denote the sheafification of the functor  $ K \, $.

\smallskip

   We see now that  $ \mathbf{K} $  as a subgroup of  $ \, T^{[a]} \! \times T' \, $  is  {\sl closed}.  To begin with, recall that   --- by construction ---   the group multiplication provides isomorphisms  $ \, h^{[a]}_1 \! \times h^{[a]}_2 \! \times h^{[a]}_3 \cong T^{[a]} \, $  and  $ \, h_{H_{2\varepsilon_1}} \!\! \times h_{H_{2\varepsilon_2}} \!\! \times h_{H_{2\varepsilon_3}} \cong T' \, $:  \, thus we can write 
%
%
 the  $ A $--points  (for  $ \, A \in \salg $)  of  $ \, T^{[a]} \! \times T' \, $  as pairs of triples of the form  $ \; \big( (\, \undt_1 , \undt_2 , \undt_3) \, , (\tau_1 , \tau_2 , \tau_3) \big) \; $  with  $ \, \undt_1, \undt_2, \undt_3 \in P_a(A) \, $  and  $ \, \tau_1, \tau_2, \tau_3 \in U(A) \, $   --- with a slight abuse of notation: we are identifying  $ \, h^{[a]}_i(\undt_i) \, $  with  $ \, \undt_i \, $  and  $ \, h_{H_{2\varepsilon_j}}\!(\tau_j) \, $  with  $ \, \tau_j \, $,  for all  $ i $  and  $ j \, $.
                                                             \par
   Recall the identities
 $ \; H_{2\varepsilon_1} = H_2 \; $,  $ \; H_{2\varepsilon_3} = H_3 \; $,  $ \; H_{2\varepsilon_2} = 2 \, H_1 - (1+a) H_2 - a H_3 \; $
which hold inside  $ \fh \, $;  in turn, these yield, for every  $ \, \undt \in P_a(A) \, $  with  $ \, \vartheta := \pi_a(\undt) \in U(A) \, $,  formal identities
  $$  \displaylines{
   h_{H_{2\varepsilon_1}}\!(\vartheta) \, = \, \vartheta^{H_{2\varepsilon_1}} = \, \undt^{H_2} = \, h_{H_2}^{[a]}(\undt) \, = \, h_2^{[a]}(\undt) \;\; ,  \qquad  h_{H_{2\varepsilon_3}}\!(\vartheta) \, = \, \vartheta^{H_{2\varepsilon_3}} = \, \undt^{H_3} = \, h_{H_3}^{[a]}(\undt) \, = \, h_3^{[a]}(\undt)  \cr
   h_{H_{2\varepsilon_2}}\!(\vartheta)  \; = \;  \vartheta^{H_{2\varepsilon_2}}  \, = \;  \undt^{2 H_1 - (1+a) H_2 - a H_3}  \, = \;  h_{2 H_1}^{[a]}\!(\undt) \cdot h_{-(1+a)H_2}^{[a]}\!(\undt) \cdot h_{-a H_3}^{[a]}\!(\undt)  }  $$
which in shorter notation read
  $$  (\vartheta,1,1) \, = \, \pi_a(1,\undt\,,1)  \;\; ,  \;\quad  (1,1,\vartheta) \, = \, \pi_a(1,1,\undt\,)  \;\; ,  \;\quad  (1,\vartheta,1) \, = \, \pi_a\big(\, \undt^2, \undt^{-(1+a)}, \undt^{-a} \big)   \qquad   \eqno (4.6)  $$
   \indent   Now consider the condition  $ \, \pi_a(x) = y^{-1} \, $  for a pair  $ (x,y) $  to belong to  $ K(A) $,  rewritten as  $ \; y \, \pi_a(x) = 1 \; $:  we want to read it for any pair  $ \; (x,y) := \big( (\, \undt_1 , \undt_2 , \undt_3) \, , (\tau_1 , \tau_2 , \tau_3) \big) \; $  as above.  By the previous analysis, it corresponds to the three equations which in turn correspond to the three conditions in (4.6).
 Recall that the group  $ \, T^{[a]} \! \times T' \, $  is represented by the algebra  $ \, \cO \big( T^{[a]} \! \times T' \big) \cong \cO\big( T^{[a]} \big) \otimes \cO\big(T'\big) \; $:  the left-hand tensor factor is a quotient of
 $ \; \cO\big( P_a^{\times 3}\big) \, \cong \, {\cO(P_a)}^{\otimes 3} \cong \,
 \allowbreak
 \bigotimes\limits_{i=1}^3 \bk\big[ \big\{\ell_i^{\,\pm a^k}\big\}_{k \in \N} \,\big] \; $,
while the right-hand one is
 $ \; \cO\big(T'\big) \, \cong \, \cO\big(U^{\times 3}\big) \, \cong \, \bigotimes\limits_{i=1}^3 \bk\big[ z_i^{\pm 1} \big] \; $  (cf.~\S \ref{aff_sgrps-P_a}).
 Then first two conditions in (4.6) correspond to the equations
 $ \; z_1 \, \ell_2 = 1 \; $,  $ \; z_3 \, \ell_3 = 1 \; $
in  $ \, \cO\big( T^{[a]} \! \times T' \big) \, $.
                                                                      \par
   As to the third condition, we can handle it as follows.  For any  $ \, \undt \in P_a(A) \, $,  let  $ \, \widehat{\undt} := \big( \undt_1, \undt_2, \undt_3 \big) \in {P_a(A)}^{\times 3} \, $  with  $ \, \undt_1 =: p_1\big(\,\widehat{\undt}\,\big) = \undt \; $,  where  $ p_1 $  is the projection of  $ \, {P_a(A)}^{\times 3} $  onto its leftmost factor, and consider the (functorial) ``diagonalisation map''  $ \, P_a \,{\buildrel {\varDelta_3} \over {\relbar\joinrel\longrightarrow}}\, {P_a(A)}^{\times 3} \, $,  $ \, \undt \mapsto \big(\undt,\undt,\undt\big) \, $;  then  $ \, \varDelta_3 \circ p_1 \, $  is a (group functor) morphism, and
 $ \, \pi_a\big(\, \undt^2, \undt^{-(1+a)}, \undt^{-a} \big) = \big(\, \widehat{\ell}_1^{\;2}, \widehat{\ell}_2^{\;-(1+a)}, \widehat{\ell}_3^{\;-a} \big)\big(\,\widehat{\undt}\,\big) \, $  where  $ \, \widehat{\ell}_i := \ell_i \circ \varDelta_3 \circ p_1 \, \in \, \cO\big(P_a^{\times 3}\big) \, \cong \, \bk\big[ \big\{ \ell_1^{\,\pm a^k}, \, \ell_2^{\,\pm a^k}, \, \ell_3^{\,\pm a^k} \big\}_{k \in \N} \,\big] \, $.
 The outcome is that the third condition in (4.6) corresponds to the equation  $ \; z_2 \, \widehat{\ell}_1^{\;2} \, \widehat{\ell}_2^{\;-1} \, \widehat{\ell}_2^{\;-a} \, \widehat{\ell}_3^{\;-a} \, = \, 1 \; $.  To sum up, we get that  $ K $  is the closed subgroup of  $ \, T^{[a]} \! \times T' \, $  defined by the ideal  $ \; I(K) = \big( z_1 \, \ell_2 - 1 \, , \, z_3 \, \ell_3 - 1 \, , \, z_2 \, \widehat{\ell}_1^{\;2} \, \widehat{\ell}_2^{\;-1} \, \widehat{\ell}_2^{\;-a} \, \widehat{\ell}_3^{\;-a} - 1 \big) \; $.

\vskip5pt

   Finally, as  $ \, T^{[a]} \! \times T' \, $  is representable (hence it is an affine group scheme) and its subgroup  $ K $  is closed, we argue that  $ \, \mathbf{K} = K \, $,  that the latter is also representable and hence the quotient  $ \; T \, \cong \, \big( T^{[a]} \! \times T' \big) \Big/ K \; $  is representable too: in particular,  $ \, \bT = T \, $  as well.
\end{free text}

\vskip7pt

   We can now introduce the algebraic group  $ \bG_0 $  we were looking for:

\begin{definition}
 \; For every  $ \, A \in \salg \, $,  we let  $ G_0(A) $  be the subgroup
 $ \; G_0(A) := \Big\langle G'_0(A) \, {\textstyle \bigcup} \, T(A) \Big\rangle \; $
of  $ \rGL\big(V_\bk(A)\big) \; $.  We denote  $ \; G_0 : \salg \longrightarrow \grps \; $  the supergroup functor which is the full subfunctor of  $ \rGL\big(V_\bk(A)\big) $  given on objects by  $ \, A \mapsto G_0(A) \, $;  we denote  $ \; \bG_0 : \salg \longrightarrow \grps \; $  the sheafification functor of $ G_0 \, $.
   {\sl Note\/}  that both  $ G_0 $  and  $ \bG_0 $  factor through  $ \alg \, $.
\end{definition}

\smallskip

\begin{proposition}  \label{bG_0-repres}
 \; The supergroup functor  $ \bG_0 $  is representable, hence   --- as it factors through  $ \alg $  ---   it is an affine group.
\end{proposition}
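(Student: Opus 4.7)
\medskip

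\textit{Proof proposal.} \; The plan is to realize $\bG_0$ as the schematic product of the classical Chevalley group $\bG'_0$ (already known to be representable) and an enlarged torus $\bT$ that merges the classical torus $T'$ with the new $a$-type subgroups $h^{[a]}_H$. The argument proceeds in three steps.

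First, I would analyze the torus functor $T$. Every generator of $T(A)$ acts as a diagonal operator on $V_\bk(A)$ via the weight decomposition, so $T$ is an abelian subgroup functor of the diagonal maximal torus $\mathcal D \subset \rGL(V_\bk)$. Choosing the Chevalley Cartan basis $H_1,H_2,H_3$ of $\fh_{\Z[a]}$, one has a natural morphism $h^{[a]}_{H_1}\times h^{[a]}_{H_2}\times h^{[a]}_{H_3}\times\bT'\lra T$ given by multiplication inside $\mathcal D$. Since the source is visibly representable (each factor is: the first three by the Hopf algebra $\bk\big[\{\ell^{\pm a^k}\}_{k\in\N}\big]$ of Section 4.4, the last by the classical Laurent polynomial Hopf algebra), and since the image can be identified with the schematic image in $\mathcal D$, the sheafification $\bT$ is representable by a Hopf subalgebra of $\cO(\mathcal D)$.

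Second, I would show that $\bT$ normalizes $\bG'_0$. A direct computation on root subgroups gives
\[
  h^{[a]}_H(t)\, x_\alpha(u)\, h^{[a]}_H(t)^{-1} \,=\, x_\alpha\big(t^{\alpha(H)}\, u\big)
  \qquad \forall\ \alpha\in\Delta_0,\ H\in\fh_{\Z[a]},\ t\in P_a(A),\ u\in A_0,
\]
and the exponent $\alpha(H)\in\Z[a]$ makes $t^{\alpha(H)}\in A_0$ well-defined by the very construction of $P_a$. Hence conjugation by $\bT$ preserves each $x_\alpha$, and also preserves $\bT'\subset\bG'_0$ since $\bT$ is abelian. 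Consequently $G_0(A)=G'_0(A)\cdot T(A)$ as a genuine subgroup, and passing to sheaves one gets $\bG_0=\bG'_0\cdot\bT$.

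Third, I would deduce representability by viewing the multiplication map $m:\bG'_0\times\bT\lra \bG_0$ as a surjection of sheaves whose fibers are torsors under $\bG'_0\cap\bT=\bT'$, embedded antidiagonally. Since $\bT'$ is a classical split torus (hence smooth and central in $\bT$), the fppf quotient $(\bG'_0\times\bT)/\bT'$ exists and is representable by an affine group scheme; and $\bG_0$ is precisely this quotient. Representability of $\bG_0$ follows, and the fact that $G_0$ and $\bG_0$ factor through $\alg$ is clear from the construction.

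The main obstacle will be Step~1: giving a clean, rigorous description of $\bT$ as a representable group scheme. The subtle point is that $\fh^0_\Z$ is not a $\Z$-module direct summand of $\fh_{\Z[a]}$ in any canonical way (the $H_{2\varepsilon_1}=H_2$ relation involves the $(1{+}a)^{-1}$ coefficient from Section~\ref{def_D(2,1;a)}), so one cannot simply write $\bT$ as a direct product $\bT'\times\bT^{[a]}$. The right approach is to realize $\bT$ as the schematic image in the diagonal torus $\mathcal D$, using faithfulness of $V$ to control relations, and to check that this image is a closed (hence affine) subgroup scheme. Once Step~1 is settled, the remaining steps are formal manipulations of well-understood affine group schemes, paralleling the classical construction recalled at the start of Section~\ref{const-che-sgroup}.
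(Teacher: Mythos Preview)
Your overall strategy --- realize $\bG_0$ as the quotient of $\bG'_0 \times \bT$ by the antidiagonal copy of their intersection, then appeal to representability of quotients by closed normal subgroups --- is exactly the paper's.  The difference lies in how each side handles your ``main obstacle'', Step~1.

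The paper sidesteps the difficulty you anticipate.  Since $T' \subseteq G'_0$ already, one has $G_0 = \langle G'_0, T\rangle = \langle G'_0, T_a\rangle$ where $T_a := \langle h^{[a]}_{H_i} : i=1,2,3\rangle$ is generated by the $a$--type one-parameter subgroups \emph{alone}.  Because $H_1,H_2,H_3$ are $\KK$--independent and $V$ is faithful, $T_a \cong P_a \times P_a \times P_a$ directly --- no schematic-image argument inside $\mathcal D$ is needed.  So the paper works with $(P_a^{\times 3}) \ltimes \bG'_0$ from the outset, and your Step~1 evaporates.

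Correspondingly, in Step~3 the paper does \emph{not} assert abstractly that the kernel equals $\bT'$.  Instead it writes $\bG'_0 \cong \bH_1\times\bH_2\times\bH_3$ with each $\bH_i \in \{\mathbf{SL}_2,\mathbb{P}\mathbf{SL}_2\}$, uses the explicit relations $H_{2\varepsilon_1}=H_2$, $H_{2\varepsilon_2}=2H_1-(1{+}a)H_2-aH_3$, $H_{2\varepsilon_3}=H_3$ to parametrize the antidiagonal $\mathbf{K}$ inside $P_a^{\times 3}\times\prod_i\bH_i$, and then writes down the defining ideal of $\mathbf{K}$ in the coordinate Hopf algebras $\bk[\{\ell_i^{\pm a^k}\}]$ and $\bk[\text{a}_j,\text{b}_j,\text{c}_j,\text{d}_j]/(\text{a}_j\text{d}_j-\text{b}_j\text{c}_j-1)$.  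Closedness of $\mathbf{K}$ is then manifest.  Your assertion that the kernel is $\bT'$ embedded antidiagonally is plausible but would still require you to exhibit $\bT'$ as a \emph{closed} subgroup of $\bT$, which brings you back to the problem of describing $\bT$ concretely --- precisely what the paper avoids by passing to $P_a^{\times 3}$.

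In short: your plan is sound, but the paper's execution is cleaner because it discards the redundant $T'$-factor from $\bT$ at the start, reducing everything to an explicit computation in $P_a^{\times 3}\times \prod_i\bH_i$.
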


\begin{proof}
 We argue much like we did above  (cf.~\S \ref{descr_T^[a]})  to describe  $ T $,  so we can be more sketchy.

\smallskip

   The groups  $ G'_0 $  and  $ T $  are subgroups of  $ \rGL(V_\bk) \, $,  and their mutual intersection is  $ \, G'_0 \cap T = T' \, $:  thus  $ G_0 $  is a fibered product of  $ G'_0 $  and  $ T $  over  $ T' \, $,  that we can describe it in down-to-earth terms.

\smallskip

   Fix  $ \, A \in \salg \, $.  Inside  $ \rGL(V_\bk)(A_0) \, $,  the subgroup  $ T(A_0) $  acts on  $ G'_0(A_0) $  by adjoint action, so  $ G_0(A_0) \, $,  generated by  $ T(A_0) $  and  $ G'_0(A_0) \, $,  is a quotient of the semi-direct product  $ \, T(A_0) \ltimes G'_0(A_0) \, $.  Indeed, let
 $ \, J(A_0) := \big\{ \big( j^{-1}, j \big) \,\big|\, j \in T(A_0) \cap G'_0(A_0) = T'(A_0) \big\} \, $,  so that  $ \; A \mapsto J(A_0) \; $  defines   --- on  $ \salg $,  through  $ \text{(alg)} $  ---   a normal subgroup functor of  $ \; T \ltimes G'_0 \; $:  then we have a (functor) isomorphism
 $ \; G_0 \, \cong \, \big( T \ltimes G'_0\big) \Big/ J \; $.
 Thus  $ \; G_0 \cong \big( T \ltimes G'_0 \big) \Big/ J \; $  as group functors, hence (forgetting the group structure) also  $ \, G_0 \cong \big( T \! \times G'_0 \big) \Big/ \! J \, $  as set-valued functors.  Taking sheafifications, as both  $ \bT $  and  $ \bG'_0 $  are representable, we infer that  $ \, \bT \times \bG'_0 \, $  is representable too.  In addition, if  $ \mathbf{J} $  is the sheafification of the functor  $ J $,  we see that  $ \mathbf{J} $  as a subgroup of  $ \, \bT \times \bG'_0 \, $  is  {\sl closed}.
                                                     \par
   To see all this in detail, let us revisit our construction.  We started with a representation of  $ \, A \otimes_\Bbbk U_\Bbbk(\fg) \, $  on  $ \, V_\Bbbk(A) := A \otimes_\Bbbk V_\Bbbk \, $  (cf.~\S \ref{adm-lat}).  By construction,  $ \bG'_0 $  is just the algebraic group associated to  $ \fg_0 $  and  $ V $  (as a  $ \fg_0 $--module)  by the classical Chevalley's construction: indeed,  $ \, \fg_0 \cong \rsl_2 \oplus \rsl_2 \oplus \rsl_2 \, $  where the three summands are given by  $ \rsl_2 $--triples  associated to positive even roots  $ \, 2\,\varepsilon_i \, $  (cf.~\S \ref{def_D(2,1;a)}),  and  $ \, \bG'_0 \cong \mathbf{H}_1 \times \mathbf{H}_2 \times \mathbf{H}_3 \, $  where  $ \, \mathbf{H}_i \in \big\{ \mathbf{SL}_2 \, , \mathbb{P}\mathbf{SL}_2 \big\} \, $  for each  $ i \, $.  Each  $ \mathbf{H}_i $  is represented by  $ \, \cO(\mathbf{SL}_2) = \bk\big[ \text{a}, \text{b}, \text{c}, \text{d} \big] \Big/ \big( \text{a} \text{d} - \text{b} \text{c} - 1 \big) \, $  if  $ \, \mathbf{H}_i \cong \mathbf{SL}_2 \, $,  and by the unital subalgebra of  $ \cO(\mathbf{SL}_2) $  generated by all products of any two elements in the set  $ \, \big\{ \text{a}, \text{b}, \text{c}, \text{d} \big\} \, $  if  $ \, \mathbf{H}_i \cong \mathbb{P}\mathbf{SL}_2 \, $.  Then the torus  $ \, T = \bT \cong U^{\times 3} \, $  is embedded in  $ \, \times_{i=1}^3 \mathbf{H}_i \, $  via  $ \, (\tau_1,\tau_2,\tau_3) \mapsto \Big(\! \Big(\!\!\! {{\tau_1 \;\;\ 0} \atop {\;\; 0 \,\;\ \tau_1^{-1}}} \!\Big) , \Big(\!\!\! {{\tau_2 \;\;\ 0} \atop {\;\; 0 \,\;\ \tau_2^{-1}}} \!\Big) , \Big(\!\!\! {{\tau_3 \;\;\ 0} \atop {\;\; 0 \,\;\ \tau_3^{-1}}} \!\Big) \!\Big) \, $,  hence it is the closed subgroup of  $ \bG'_0 $  defined by the ideal  $ \, I(T) = \big( \text{b}_1 \, , \text{b}_2 \, , \text{b}_3 \, , \text{c}_1 \, , \text{c}_2 \, , \text{c}_3 \big) \, $.
                                                            \par
   Now the description of the subgroup  $ J $  of  $ \, \bT \times \bG' \, $  goes much along the same lines as for describing the subgroup  $ \bT' $  of  $ \, T^{[a]} \! \times T' \, $  in  \S \ref{aff_sgrps-P_a}  above.  Then by the same arguments one eventually finds that  $ \, J = \mathbf{J} \, $  is a closed in  $ \, \bT \times \bG'_0 \, $, as claimed.
                                                            \par
   A similar analysis works too when some  $ \mathbf{H}_i $'s  (possibly all of them) are isomorphic to  $ \mathbb{P}\mathbf{SL}_2 \, $.

\vskip5pt

   Finally, as  $ \, \bT \times \bG'_0 \, $  is representable (so it is an affine group scheme), its quotient by the closed normal subgroup  $ \mathbf{J} $  is representable too, hence the same holds for the isomorphic functor  $ \bG_0 \, $.
\end{proof}

\vskip5pt

   We can now eventually define our Chevalley supergroups:

\vskip13pt

\begin{definition} \label{def_Che-sgroup_funct} {\ }
 Let  $ \fg $  and  $ V $  be as above.  We call  {\it Chevalley supergroup functor},  associated to  $ \fg $  and  $ V \, $,  the functor  $ \; G : \salg \lra \text{(grps)} \; $  given by:
 \vskip3pt
   \noindent \quad   --- if  $ \, A \! \in \! \text{\it Ob}\big(\salg\big) \, $  we let  $ \, G(A) \, $  be the subgroup of  $ \rGL\big(V_\bk(A)\big) $  generated by  $ G_0(A) $  and the one-parameter subgroups  $ x_\beta(A) $  with  $ \, \beta \in \Delta_1 \, $,  that is
 \vskip-4pt
  $$  G(A)  \; := \;  \Big\langle\, G_0(A) \, , \, x_\beta(A) \,\;\Big|\;\, \beta \in \Delta_1 \, \Big\rangle  \; = \;  \Big\langle T(A) \, , \, x_\delta(A) \;\Big|\; \delta \in \Delta \Big\rangle  $$
 \vskip-2pt
\noindent
 where the second identity follows from the previous description of  $ G_0 \, $.
 \vskip2pt
   --- if  $ \, \phi \in \text{Hom}_{\salg}\big(A\,,B\big) $,  then  $ \; \End_\bk(\phi) : \End_\bk\big(V_\bk(A)\big) \! \lra \End_\bk\big(V_\bk(B)\big) \,$  (given on matrix entries by  $ \phi $  itself)  respects the sum and the associative product of matrices; then  $ \End_\bk( \phi) $  clearly restricts to a group morphism  $ \, \rGL\big(V_\bk(A)\big) \lra \rGL\big(V_\bk(B)\big) \, $.
 The latter maps the generators of  $ G(A) $  to those of  $ G(B) $,  hence restricts to a group morphism  $ \; G(\phi) : G(A) \lra G(B) \; $.
 \vskip3pt
   We call {\it Chevalley supergroup}   --- associated to  $ \, \fg = D(2,1;a) \, $  and  $ V $  ---   the sheafification  $ \bG $  of $ G \, $  (cf.~\cite{fg1}, Appendix).  Thus  $ \; \bG : \salg \lra \text{(grps)} \; $  is a sheaf functor such that  $ \, \bG(A) = G(A) \, $  if  $ \, A \in \salg \, $  is  {\sl local}.
%
%
 To stress the dependence on  $ V $,  we shall also write  $ G_V $  for  $ G $  and  $ \bG_V $  for  $ \bG \, $.
\end{definition}

\vskip3pt

\begin{remarks}   \label{rem_alt-constr}
 {\it (a)} \,  For  $ \, a \in \Z \, $,  a construction of Chevalley supergroups of type  $ D(2,1;a) $  was given in \cite{fg1}:  it coincides with the present one, because  $ \, P_a(A) = U(A_0) \, $  if  $ \, a \in \Z \, $   --- for  $ \, A \in \salg \, $.
 \vskip3pt
   {\it (b)} \,  An alternative definition of Chevalley supergroups can be given letting the subgroup (functor)  $ \, \text{\it Up} : A \mapsto \text{\it Up}(A_0) \, $  play the role of  $ P_a \, $,  where  $ \text{\it Up}(A_0) := \big( 1 \! + \mathfrak{N}(A_0) \big) \, $  is the subgroup of  $ U(A_0) $  of all  {\sl unipotent\/}  elements of  $ A_0 \, $  (and  $ \mathfrak{N}(A_0) $  is the nilradical of  $ A_0 $).  All our arguments and results from now on still stand valid as well.  Nevertheless, using the subgroup functor  $ \text{\it Up} \, $  one does not recover the construction of  \cite{fg1}  for  $ \, a \in \Z \, $,  which instead is the case with  $ P_a \, $,  see  {\it (a)\/}  above.
\end{remarks}

\medskip

\subsection{Chevalley supergroups as affine supergroups}
\label{che-sgroup_alg}

\smallskip

   {\ } \quad   Our definition of the Chevalley supergroup  $ \bG $  does not imply (at first sight) that  $ \bG $  is  {\it representable},  so that it is indeed an affine supergroup scheme.  In this section we prove this fact.

\medskip

\begin{definition}  \label{subgrps}
 For any  $ \, A \in \salg \, $,  we define the subsets of  $ G(A) $
 \vskip-4pt
  $$  \begin{array}{rl}
   G_1(A)  &  \! := \;  \left\{\, {\textstyle \prod_{\,i=1}^{\,n}} \,
x_{\gamma_i}(\vartheta_i) \;\Big|\; n \in \N \, , \; \gamma_i \in \Delta_1 \, ,
\; \vartheta_i \in A_1 \,\right\}_{\phantom{\Big|}}  \\
   G_0^\pm(A)  &  \! := \;  \left\{\, {\textstyle \prod_{\,i=1}^{\,n}} \,
x_{\alpha_i}(t_i) \;\Big|\; n \in \N \, , \; \alpha_i \in \Delta^\pm_0 \, ,
\; t_i \in A_0 \,\right\}_{\phantom{\Big|}}  \\
   G_1^\pm(A)  &  \! := \;  \left\{\, {\textstyle \prod_{\,i=1}^{\,n}} \,
x_{\gamma_i}(\vartheta_i) \;\Big|\; n \in \N \, , \; \gamma_i \in \Delta^\pm_1 \, ,
\; \vartheta_i \in A_1 \,\right\}_{\phantom{\Big|}}  \\
   G^\pm(A)  &  \! := \;  \left\{ {\textstyle \prod_{\,i=1}^{\,n}} \,
x_{\beta_i}(\bt_i) \;\Big|\; n \! \in \! \N \, , \, \beta_i \! \in \! \Delta^\pm ,
\, \bt_i \! \in \! A_0 \! \cup \! A_1 \right\}  \, = \;
\big\langle G_0^\pm(A) \, , G_1^\pm(A) \big\rangle
      \end{array}  $$
Moreover, fixing any total order  $ \, \preceq \, $  on $ \Delta_1^\pm \, $,  and letting  $ \, N_\pm = \big| \Delta_1^\pm \big| \, $,  we set
  $$  G_1^{\pm,<}(A)  \,\; := \;  \left\{\; {\textstyle \prod_{\,i=1}^{\,N_\pm}} \, x_{\gamma_i}(\vartheta_i) \,\;\Big|\;\, \gamma_1 \prec \cdots \prec \gamma_{N_\pm} \in \Delta^\pm_1 \, , \; \vartheta_1, \dots, \vartheta_{N_\pm} \in A_1 \,\right\}  $$
and for any total order  $ \, \preceq \, $  on  $ \Delta_1 \, $,  and letting  $ \, N := \big| \Delta \big| = N_+ + N_- \, $,  we set
  $$  G_1^<(A)  \,\; := \;  \left\{\, {\textstyle \prod_{\,i=1}^N} \, x_{\gamma_i}(\vartheta_i) \,\;\Big|\;\, \gamma_1 \prec \cdots \prec \gamma_{\scriptscriptstyle N} \in \Delta_1 \, , \; \vartheta_1, \dots, \vartheta_N \in A_1 \,\right\}  $$
By the way, note that  $ \, N_\pm = 4 \, $  and  $ \, N = 8 \, $  for  $ \, \fg = D(2,1;a) \, $.
                                                                     \par
   Similar notations will denote the sheafifications  $ \bG_1 \, $,
$ \bG^\pm $,  $ \bG_0^\pm $,  $ \bG_1^\pm $,  etc.
\end{definition}

\vskip7pt

   We begin studying commutation relations among generators of Chevalley groups.  As a matter of notation, when  $ \varGamma $  is any group and  $ \, g, h \in \varGamma \, $  we denote by  $ \, (g,h) := g \, h \, g^{-1} \, h^{-1} \, $  their commutator.

\vskip13pt

\begin{lemma}  \label{comm_1-pssg} {\ }
\vskip9pt
\noindent
 \;\; (a) \,  Let  $ \; \alpha \in \Delta_0 \, $,  $ \, \gamma \in \Delta_1 \, $,  $ \, A \in \salg \, $  and  $ \, t \in A_0 \, $,  $ \, \vartheta \in A_1 \, $.  Then there exist  $ \, c_s \! \in \! \Z \, $  such that
  $$  \big( x_\gamma(\vartheta) \, , \, x_\alpha(t) \big)  \, = \,  {\textstyle \prod_{s>0}} \, x_{\gamma + s \, \alpha}\big(c_s \, t^s \vartheta\big)  \;\; \in \;\;  G_1(A)  $$
(the product being finite).  Indeed, with  $ \, \varepsilon_k = \pm 1 \, $  and  $ \, r \, $  as in \S \ref{comm-rul_kost},  part (2),
  $$  \big( 1 + \vartheta \, X_\gamma \, , \, x_\alpha(t) \big)  \, = \,
{\textstyle \prod_{s>0}} \, \Big( 1 + {\textstyle \prod_{k=1}^s \varepsilon_k \cdot {{s+r} \choose r}} \cdot t^s \vartheta \, X_{\gamma + s \, \alpha} \Big)  $$
where the factors in the product are taken in any order (as they do commute).
\vskip9pt
\noindent
 \;\; (b)  Let  $ \; \gamma , \delta \! \in \! \Delta_1 \, $,  $ \, A \! \in \! \salg \, $,  $ \, \vartheta, \eta \! \in \! A_1 \, $.  Then (notation of  Definition \ref{def_che-bas})
  $$  \big( x_\gamma(\vartheta) \, , \, x_\delta(\eta) \big)  \; = \;\,  x_{\gamma + \delta}\big(\! - \! c_{\gamma,\delta} \; \vartheta \, \eta \big)  \; = \;  \big(\, 1 \! - \! c_{\gamma,\delta} \; \vartheta \, \eta \, X_{\gamma + \delta} \,\big)  \;\; \in \;\;  G_0(A)  $$
if  $ \; \delta \not= -\gamma \; $;  otherwise, for  $ \; \delta = -\gamma \, $,  we have
  $$  \big( x_\gamma(\vartheta) \, , \, x_{-\gamma}(\eta) \big)  \; = \;  \big(\, 1 \! - \vartheta \, \eta \, H_\gamma \,\big)  \; = \;  h^{[a]}_{H_\gamma}\!\big( 1 \! - \vartheta \, \eta \big) \;\; \in \;\; G_0(A)  $$
 \vskip3pt
\noindent
 \;\; (c)  Let  $ \; \beta \in \Delta \, $,  $ \, A \in \salg \, $,  $ \, \bu \in A_0 \! \cup \! A_1 \, $.  Then
  $$  \displaylines{
   \hskip7pt   h_{H_0}(t) \; x_\beta(\bu) \; {h_{H_0}(t)}^{-1}  = \;  x_\beta\big( t^{\beta(H_0)} \, \bu \big)  \; \in \;  G_{\!p(\beta)}\!(A)   \hskip15pt  \forall \;\; H_0 \in \fh^0_\Z \, , \; t \in U(A_0)  \cr
   \hskip7pt   h^{[a]}_H(\undt) \; x_\beta(\bu) \; {h^{[a]}_H(\undt)}^{-1}  = \;  x_\beta\big( \undt^{\beta(H)} \, \bu \big)  \, \in \,  G_{\!p(\beta)}\!(A)   \hskip13pt  \forall \;\; H \in \fh_{\Z[a]} \, , \; \undt \in P_a(A_0)  }  $$
where  $ \; p(\beta) := s \, $,  by definition, if and only if  $ \, \beta \in \Delta_s \; $.
\end{lemma}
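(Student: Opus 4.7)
The plan is to prove each of the three parts by direct computation with the series defining the one-parameter subgroups, exploiting nilpotency of odd scalars together with the structural identities collected in Section \ref{comm-rul_kost}. For part (a), I would first derive the conjugation identity $x_\alpha(t) \, X_\gamma \, x_\alpha(t)^{-1} = X_\gamma + \sum_{s \geq 1} c_s \, t^s \, X_{\gamma + s \alpha}$ with $c_s = \prod_{k=1}^s \varepsilon_k \binom{r + s}{s}$, by left-multiplying $\exp(t X_\alpha) \cdot X_\gamma$ and applying the commutation rule for $X_\alpha^{(n)} X_\gamma$ from Section \ref{comm-rul_kost}(2) term by term, then re-collecting the factor $\exp(t X_\alpha)$ on the right. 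Substituting this into $(x_\gamma(\vartheta), x_\alpha(t)) = (1 + \vartheta X_\gamma) \, x_\alpha(t) \, (1 - \vartheta X_\gamma) \, x_\alpha(t)^{-1}$ and using that the odd scalar $\vartheta$ supercommutes with the even operator $X_\alpha$ (hence with $x_\alpha(t)$), together with $\vartheta^2 = 0$ which annihilates every quadratic term in $\vartheta X_\gamma$, produces the claimed formula; the factors $x_{\gamma + s \alpha}(c_s t^s \vartheta)$ in the resulting product commute pairwise (their cross-products contain $\vartheta \cdot \vartheta = 0$), so the product is order-independent and equals $1 + \sum_s c_s t^s \vartheta X_{\gamma + s \alpha}$.

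For part (b), I would set $u := \vartheta X_\gamma$ and $v := \eta X_\delta$; both are \emph{even} in the superalgebra $A \otimes_\bk U_\bk(\fg)$ and satisfy $u^2 = v^2 = 0$ (since $\vartheta^2 = \eta^2 = 0$ and $X_\gamma^2 = X_\delta^2 = 0$), so $1 + u = \exp(u)$ and $1 + v = \exp(v)$. Hence $(x_\gamma(\vartheta), x_\delta(\eta)) = \exp(u) \exp(v) \exp(-u) \exp(-v)$, and a direct expansion in which every contribution other than $1 + [u,v]$ carries a factor $u^2$, $v^2$, $\vartheta^2$ or $\eta^2$ and therefore vanishes reduces the right-hand side to $1 + [u, v]$. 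Moving the odd scalars past the odd root vectors with the Koszul sign rule yields $[u, v] = -\vartheta \eta \, [X_\gamma, X_\delta]$, which by Definition \ref{def_che-bas} equals $-c_{\gamma, \delta} \, \vartheta \eta \, X_{\gamma + \delta}$ (read as zero when $\gamma + \delta \notin \Delta$) if $\delta \neq -\gamma$, and $-\sigma_\gamma \, \vartheta \eta \, H_\gamma$ if $\delta = -\gamma$. To identify the second expression with $h^{[a]}_{H_\gamma}(1 - \vartheta\eta)$ one notes that $1 - \vartheta\eta \in P_a(A)$ by Remark \ref{remarks-P_a}(a) (being $1$ plus a nilpotent), and that the defining series $\sum_n (-\vartheta\eta)^n \binom{H_\gamma}{n}$ truncates at $n = 1$ because $(\vartheta\eta)^2 = 0$.

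For part (c), I would use the weight decomposition of $V_\bk(A)$: by construction $h_{H_0}(t)$ (respectively $h^{[a]}_H(t)$) acts on each weight component $V_\mu$ as multiplication by the scalar $t^{\mu(H_0)}$ (respectively $t^{\mu(H)}$), while $X_\beta$ maps $V_\mu$ into $V_{\mu + \beta}$. A one-line computation on weight vectors then gives the operator identity $h_{H_0}(t) \, X_\beta \, h_{H_0}(t)^{-1} = t^{\beta(H_0)} \, X_\beta$, and substituting this into $x_\beta(\bu) = \exp(\bu X_\beta)$ (for $\beta \in \Delta_0$) or $x_\beta(\bu) = 1 + \bu X_\beta$ (for $\beta \in \Delta_1$) immediately yields the claimed identity; the case of $h^{[a]}_H(t)$ with $t \in P_a(A_0)$ is identical \emph{mutatis mutandis}, since $t^{\mu(H)}$ is well defined precisely by the construction of Section \ref{mult_one-param-ssgroups_a-type}.

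The main obstacle is part (a), where one must organize the series manipulation so that the coefficient of $t^s \vartheta X_{\gamma + s\alpha}$ agrees with $\prod_{k=1}^s \varepsilon_k \binom{s + r}{s}$ on the nose, and carry the overall sign from $(1 + \vartheta X_\gamma)(1 - \vartheta X_\gamma - \cdots)$ correctly through the cancellations. Parts (b) and (c) are essentially formal once the nilpotency observations and the diagonal action on weight spaces are in place.
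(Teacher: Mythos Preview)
Your proposal is correct and follows essentially the same approach as the paper. The paper's own proof is a terse reference to Steinberg's classical commutator formulas and to \cite{fg}, Lemma~5.11, together with ``simple calculations'' using the commutation relations of \S\ref{comm-rul_kost} and the nilpotency identity $(\vartheta\eta)^2 = -\vartheta^2\eta^2 = 0$, plus the observation (exactly as you note) that $1 - \vartheta\eta \in 1 + \mathfrak{N}(A_0) \subseteq P_a(A)$ so that $h^{[a]}_{H_\gamma}(1-\vartheta\eta)$ is well-defined and its defining series truncates at the linear term; your plan simply makes these calculations explicit rather than citing them.
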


\begin{proof} Like for  \cite{fg1},  Lemma 5.13, these results follow directly from the classical ones
%
%
 and simple calculations, using the relations in  \S \ref{comm-rul_kost}  and the identity  $ \, {(\vartheta \eta)}^2 = - \vartheta^2 \, \eta^2 = 0 \, $.  In addition, for  {\it (b)\/}  in the present case we have also to take into account that  $ \; (1 - \vartheta \, \eta) \, \in \, \big( 1 + \mathfrak{N}(A_0) \big) \, \subseteq \, P_a(A) \; $  for all  $ \, \vartheta , \eta \in A_1 \, $,  so  $ \; h^{[a]}_{H_\gamma} \! \big( 1 \! - \vartheta \, \eta \big) \; $  is well-defined and equal to  $ \; \big(\, 1 \! - \vartheta \, \eta \, H_\gamma \,\big) \; $.
\end{proof}

\vskip11pt

   Now, with our definition of Chevalley groups at hand and the commutation rules among their generators available   -- as given in  Lemma \ref{comm_1-pssg}  above ---   one can reproduce whatever was done in  \cite{fg1},  \S 5.3.
%
%
 Just some minimal changes are in order, due to a couple of facts: first, the presence among generators of the multiplicative one-parameter supersubgroups of type  $ a $,  which are handled like the classical ones using  Lemma \ref{comm_1-pssg};  second, several shortcuts and simplifications are possible, as the structure of  $ \, \fg = D(2,1;a) \, $  is much simpler than that of the general classical Lie superalgebra.  Thus in the following I shall bound myself to list the results we get (essentially, the main steps in the line of arguing of  \cite{fg1}),  as the proofs can be easily recovered from  \cite{fg1}, \S 5.  The first result is:

\vskip11pt

\begin{theorem} \label{g0g1}
 For any  $ \, A \in \salg \, $,  there exist set-theoretic factorizations
 \vskip-9pt
  $$  \begin{matrix}
   \text{\it (a)}  &  \qquad  G(A)  \; = \;  G_0(A) \; G_1(A) \quad ,  \phantom{\Big|}  &
\;\quad  G(A)  \; = \;  G_1(A) \; G_0(A)  \\
   \text{\it (b)}  &  \qquad  G^\pm(A)  \; = \;  G^\pm_0(A) \; G^\pm_1(A) \quad ,  \phantom{\Big|}  &
\;\quad  G^\pm(A)  \; = \;  G^\pm_1(A) \; G^\pm_0(A)  \\
   \text{\it (c)}  &  \qquad  G(A)  \; = \;  G_0(A) \, G_1^<(A) \quad ,  \phantom{\Big|}  &
\;\quad  G(A)  \; = \;  G_1^<(A) \, G_0(A)
      \end{matrix}  $$
\end{theorem}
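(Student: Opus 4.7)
My plan is to deduce all three factorizations from the commutation relations already established in Lemma \ref{comm_1-pssg}, by a push-through / reordering procedure applied to arbitrary words in the generators of $G(A)$.

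For part \textit{(a)}, every element of $G(A)$ is by construction a finite product of generators of two kinds: the ``even'' ones --- the classical multiplicative $h_{H_0}(t)$, the $a$-type multiplicative $h^{[a]}_H(t)$, and the even additive $x_\alpha(t)$ with $\alpha \in \Delta_0$ --- which all lie in $G_0(A)$, together with the ``odd'' ones $x_\gamma(\vartheta)$ with $\gamma \in \Delta_1$, which lie in $G_1(A)$. Lemma \ref{comm_1-pssg}\textit{(c)} lets me commute any multiplicative generator past any $x_\beta(\vartheta)$, simply rescaling $\vartheta$ to $t^{\beta(H)}\vartheta$; Lemma \ref{comm_1-pssg}\textit{(a)} lets me commute an even additive $x_\alpha(t)$ past an odd $x_\gamma(\vartheta)$ at the cost of inserting a finite string of extra odd factors $x_{\gamma+s\alpha}(c_s\,t^s\vartheta)$, $s\geq 1$, on the $G_1$-side. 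Iterating, I migrate every even generator to the left, obtaining $G(A) = G_0(A)\,G_1(A)$; migrating to the right instead gives $G(A) = G_1(A)\,G_0(A)$. Part \textit{(b)} is then the same argument restricted to $\Delta^\pm$: one just checks that when $\alpha \in \Delta_0^\pm$ and $\gamma \in \Delta_1^\pm$, any root of the form $\gamma+s\alpha$ still belongs to $\Delta^\pm$, so the correction terms stay in $G_1^\pm(A)$ (and likewise for $G_0^\pm(A)$).

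For part \textit{(c)}, I start from $G(A) = G_0(A)\,G_1(A)$ and reorganize the $G_1$-factor into an ordered product $\prod_{i=1}^{N} x_{\gamma_i}(\vartheta_i)$ with $\gamma_1 \prec \cdots \prec \gamma_N$. To swap two adjacent odd factors $x_\gamma(\vartheta)$ and $x_\delta(\eta)$ I apply Lemma \ref{comm_1-pssg}\textit{(b)}: the commutator equals $x_{\gamma+\delta}\big(-c_{\gamma,\delta}\,\vartheta\eta\big)$ when $\gamma+\delta \ne 0$, and $h^{[a]}_{H_\gamma}\!\big(1-\vartheta\eta\big)$ when $\delta=-\gamma$; in both cases it lies in $G_0(A)$, since $\vartheta\eta \in A_0$ and $1-\vartheta\eta \in 1+\mathfrak{N}(A_0) \subseteq P_a(A)$ by Remarks \ref{remarks-P_a}\textit{(a)}. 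Each swap therefore produces a correction in $G_0(A)$, which I immediately push leftward through the remaining odd factors using part \textit{(a)}. Any consecutive repetition of the same root then collapses via $x_\gamma(\vartheta)\,x_\gamma(\eta) = x_\gamma(\vartheta+\eta)$, because $X_\gamma^2 = 0$ for $\gamma \in \Delta_1$ (see \S\ref{comm-rul_kost}). Inserting trivial factors $x_\gamma(0)=1$ for each $\gamma \in \Delta_1$ not yet appearing puts the result into the canonical form of $G_1^<(A)$; the symmetric procedure yields $G_1^<(A)\,G_0(A)$.

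The main obstacle to make all this rigorous is \emph{termination}. In \textit{(a)}, each time I push $x_\alpha(t)$ past $x_\gamma(\vartheta)$ the new factors $x_{\gamma+s\alpha}(\cdots)$ involve roots of strictly greater height than $\gamma$, so a Steinberg-type height induction on the finite set $\Delta$ (as in \cite{fg}, \S 5.3) guarantees that the push-through halts after finitely many steps. In \textit{(c)}, the correction from each swap of two odd generators carries the even nilpotent monomial $\vartheta\eta \in A_1^{\,2}$; further swaps only produce corrections carrying higher products of odd elements, so standard nilpotency bounds the procedure. Beyond this termination point, the whole proof is a direct transcription of \cite{fg}, \S 5.3: the one genuinely new ingredient, namely the presence of the $a$-type multiplicative generators $h^{[a]}_H$, has already been absorbed into Lemma \ref{comm_1-pssg}\textit{(c)}.
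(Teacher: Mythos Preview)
Your proposal is correct and follows exactly the route the paper takes: it invokes the commutation rules of Lemma~\ref{comm_1-pssg} and then appeals to the reordering argument of \cite{fg}, \S5.3, which is precisely what the paper does (the paper gives no independent proof here, only the remark that claim~\textit{(c)} follows from~\textit{(a)} ``just reordering the factors in $G_1(A)$ by means of the commutation rules''). One small caution: your termination heuristic for~\textit{(a)} via ``strictly greater height'' is not literally correct, since $\alpha$ may be a negative even root and then $\gamma+s\alpha$ has smaller height; the clean way to phrase it is that $G_1(A)$ is already a subgroup (products and inverses of the $x_\gamma(\vartheta)$ stay in $G_1(A)$) and each generator of $G_0(A)$ normalizes it by Lemma~\ref{comm_1-pssg}\textit{(a),(c)}, so $G_0(A)\,G_1(A)$ is a subgroup containing all generators of $G(A)$ --- but this is exactly the content of the argument in \cite{fg} you cite, so your deferral is in line with the paper's own.
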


\smallskip

   Claim  {\it (a)\/}  above is a group-theoretical counterpart for  $ G $  of the splitting  $ \, \fg = \fg_0 \oplus \fg_1 \, $   --- a super-analogue of the Cartan decomposition for reductive groups ---   while  {\it (b)\/}  is a similar result for  $ G^+ $  and  $ G^- \, $.  Claim  {\it (c)\/}  improves  {\it (a)\/}:  like in  \cite{fg1}  (Theorem 5.17), it follows from  Theorem \ref{g0g1}  just reordering the factors in  $ G_1(A) $  by means of the commutation rules in  Lemma \ref{comm_1-pssg}.

\medskip

   With a more careful analysis, one improves the previous results to get the following, key one:

\medskip

\begin{theorem} \label{nat-transf}
   The group product yields isomorphisms of set valued functors
 \vskip-15pt
  $$  \displaylines{
   G_0 \times G_1^{-,<} \times G_1^{+,<} \;{\buildrel \cong \over {\relbar\joinrel\lra}}\; G \quad ,  \qquad  \bG_0 \times \bG_1^{-,<} \times \bG_1^{+,<} \;{\buildrel \cong \over {\relbar\joinrel\lra}}\; \bG  }  $$
 \vskip-3pt
\noindent
 as well as those obtained by permuting the  $ (-) $-factor  and the  $ (+) $-factor  and/or moving the  $ (0) $-factor  to the right.  All these induce similar functor isomorphisms with the left-hand side obtained
by permuting the factors above, like  $ \, G_1^{+,<} \times G_0 \times G_1^{-,<} \;{\buildrel \cong \over \lra}\; G \, $,  $ \, \bG_1^{-,<} \times \bG_0 \times \bG_1^{+,<} \;{\buildrel \cong \over \lra}\; G \, $,  etc.
\end{theorem}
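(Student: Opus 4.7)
The strategy is to reduce the claim to a single canonical ordering and then propagate it to the others using the commutation rules in Lemma \ref{comm_1-pssg}. Fix the total order $\preceq$ on $\Delta_1$ that places every root in $\Delta_1^-$ before every root in $\Delta_1^+$: then by definition $G_1^<(A) = G_1^{-,<}(A) \cdot G_1^{+,<}(A)$, and Theorem \ref{g0g1}(c) yields a set-theoretic surjection $G_0(A) \times G_1^{-,<}(A) \times G_1^{+,<}(A) \twoheadrightarrow G(A)$. This surjection is plainly natural in $A$, so provides a surjective natural transformation of set-valued functors.

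The heart of the proof is injectivity. Enumerate $\Delta_1 = \{\gamma_1 \prec \cdots \prec \gamma_N\}$ (with $N = 8$) and expand each factor $x_{\gamma_i}(\vartheta_i) = 1 + \vartheta_i X_{\gamma_i}$, so that
$$\prod_{i=1}^{N} x_{\gamma_i}(\vartheta_i) \; = \; \sum_{I \subseteq \{1,\ldots,N\}} \pm\,\vartheta_I \, X_I$$
where $\vartheta_I$ is the ordered product of the $\vartheta_i$ ($i \in I$) and $X_I$ the ordered product of the $X_{\gamma_i}$. Suppose $g_0 \cdot g_1^{-,<} \cdot g_1^{+,<} = g_0' \cdot g_1^{'-,<} \cdot g_1^{'+,<}$; applying both sides to the faithful module $V_\bk(A) = A \otimes_{\Z_a} M$, I recover all the coefficients $\vartheta_I$ from the matrix entries, because the $X_I$'s appear among the ordered PBW monomials of Theorem \ref{PBW-Kost} and are therefore $\kzag$-linearly independent in the faithful action on $M$. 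This determines each $\vartheta_i$ (and thus $g_1^{-,<}, g_1^{+,<}$) uniquely; dividing off, $g_0 = g_0'$ follows. Hence the natural transformation is a functor isomorphism.

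To obtain the other orderings, I move $G_0$ through any $G_1^{\pm,<}$ using Lemma \ref{comm_1-pssg}(c): conjugating $x_\beta(\vartheta)$ by an element of $T(A)$ or $G'_0(A)$ gives back an element of the same $G_{p(\beta)}^{\pm}$ factor (with rescaled parameter), so a relation $G_0 \cdot G_1^{-,<} \cdot G_1^{+,<}$ can be rearranged into any of the five other orderings. The only nontrivial permutation is swapping the two odd factors: to pass $G_1^{+,<}$ past $G_1^{-,<}$ I invoke Lemma \ref{comm_1-pssg}(b), whose right-hand side lands in $G_0(A)$ (via an $h^{[a]}_{H_\gamma}$ correction), producing a new $G_0$-factor that can then be absorbed by the previous step. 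Each such rearrangement is a functorial bijection, so yields a new isomorphism of functors.

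Finally, the sheafified statement is automatic: sheafification is a left adjoint, hence preserves finite products and carries functor isomorphisms to functor isomorphisms, so $\bG_0 \times \bG_1^{-,<} \times \bG_1^{+,<} \xrightarrow{\;\cong\;} \bG$ and similarly for the other orderings. The principal technical obstacle is the injectivity step: one must verify that the formal coefficient $\vartheta_I$ of $X_I$ really can be isolated inside the operator on $V_\bk(A)$, which hinges on the $\Z_a$-freeness of the ordered PBW monomials established in Theorem \ref{PBW-Kost} together with faithfulness of $V$ on $\fg$ (and thus of $V_\bk(A)$ on the scalar extension of $\kzag$).
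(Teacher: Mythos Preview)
Your overall architecture---surjectivity from Theorem~\ref{g0g1}(c), then injectivity, then permuting factors, then sheafifying---matches the paper's (which in turn defers to \cite{fg}, \S 5.3). The surjectivity and sheafification steps are fine. But the injectivity argument has a real gap.

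You write that the $X_I$'s ``are therefore $\kzag$-linearly independent in the faithful action on $M$'', and later that faithfulness of $V$ on $\fg$ yields faithfulness ``of $V_\bk(A)$ on the scalar extension of $\kzag$''. This inference fails: $V$ is faithful as a $\fg$-module, but $\kzag$ (and $U(\fg)$) has infinite $\Z_a$-rank while $\End_{\Z_a}(M)$ has finite rank, so the representation $\kzag \to \End(M)$ has a large kernel. Linear independence of the $X_I$'s in $\kzag$ (which PBW gives) does not transfer to $\End(M)$. Even if the $2^{8}$ monomials $X_I$ happened to remain independent as operators on some particular $V$, you have not shown it, and the theorem is for an arbitrary faithful rational $V$. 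Worse, you have not explained how to disentangle $g_0$ from the odd part: since $g_0$ has matrix entries in $A_0 \supseteq A_1^{2}$, its contribution is spread through every level of the $A_1$-adic filtration, so one cannot simply ``read off'' the coefficient of $X_I$ in $g_0 \cdot \sum_I \pm\vartheta_I X_I$ without already knowing $g_0$. The route taken in \cite{fg} (hence implicitly here) does not argue via linear independence of PBW monomials as operators; it proceeds by an inductive reduction along the filtration of $A$ by the powers $A_1^{\,n}$ (note that $A_1^{\,n}$ and $A_1^{(n)}$ are introduced in \S\ref{first_preliminaries} precisely for this purpose, and Proposition~\ref{semidirect} records the base case $A_1^{\,2}=\{0\}$), using the commutation rules of Lemma~\ref{comm_1-pssg} at each step. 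That is the ``more careful analysis'' the paper refers to.

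Your Step~3 is also too quick. Lemma~\ref{comm_1-pssg}(c) covers only conjugation by toral elements; conjugating $x_\gamma(\vartheta)$ (with $\gamma \in \Delta_1^{-}$) by $x_\alpha(t)$ for $\alpha \in \Delta_0$ is governed by part~(a) and produces factors $x_{\gamma + s\alpha}(\cdots)$ where $\gamma + s\alpha$ may land in $\Delta_1^{+}$ (e.g.\ $\gamma=-\alpha_1$, $\alpha=2\alpha_1+\alpha_2+\alpha_3$). So conjugation by a general element of $G_0$ does not preserve $G_1^{\pm,<}$, and moving $G_0$ across an odd block is not the single rescaling you describe. The four orderings with $G_0$ at an end do follow from the two versions of Theorem~\ref{g0g1}(c) combined with the two obvious total orders on $\Delta_1$ (once injectivity is settled); the orderings with $G_0$ in the middle require a separate argument.
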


\smallskip

   The same technique used to prove  Theorem \ref{nat-transf}  also yields the following:

\smallskip

\begin{proposition} \label{G1pm-repres}
  The functors  $ \, G_1^{\pm,<} : \salg \lra \sets \; $  are representable: they are the functor of points of the superscheme  $ \, \mathbb{A}_\bk^{0|N_\pm} $,  with  $ \, N_\pm = \big| \Delta_1^\pm \big| \, $.  In particular  $ \; G_1^{\pm,<} = \bG_1^{\pm,<} \; $.
\end{proposition}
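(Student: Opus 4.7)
My plan is to exhibit an explicit natural isomorphism $\Phi : \mathbb{A}_\bk^{0|N_\pm} \;{\buildrel \cong \over \lra}\; G_1^{\pm,<}$ of functors $\salg \to \sets$, following the same weight-space bookkeeping used to prove Theorem \ref{nat-transf}. Fix the total order $\gamma_1 \prec \cdots \prec \gamma_{N_\pm}$ of $\Delta_1^\pm$ that appears in the definition of $G_1^{\pm,<}$; for every $A \in \salg$, set
$$\Phi_A \colon \mathbb{A}_\bk^{0|N_\pm}(A) = (A_1)^{N_\pm} \,\lra\, G_1^{\pm,<}(A), \qquad (\vartheta_1, \dots, \vartheta_{N_\pm}) \,\longmapsto\, \prod_{i=1}^{N_\pm} x_{\gamma_i}(\vartheta_i).$$
Naturality in $A$ is immediate from the functoriality of the additive one-parameter supersubgroups $x_{\gamma_i}$, and $\Phi_A$ is surjective directly from the defining formula of $G_1^{\pm,<}(A)$.

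The real work is injectivity. Since $X_{\gamma_i}^{\,2} = 0$ for $\gamma_i \in \Delta_1$ (relations of \S \ref{comm-rul_kost}), each factor equals $x_{\gamma_i}(\vartheta_i) = 1 + \vartheta_i X_{\gamma_i}$, so expanding the product and collecting terms yields
$$\prod_{i=1}^{N_\pm} \big( 1 + \vartheta_i X_{\gamma_i} \big) \;=\; \sum_{S \subseteq \{1, \dots, N_\pm\}} \epsilon_S \, \vartheta_S \, X_S,$$
where for $S = \{i_1 < \cdots < i_k\}$ the symbols $\vartheta_S := \vartheta_{i_1} \cdots \vartheta_{i_k}$ and $X_S := X_{\gamma_{i_1}} \cdots X_{\gamma_{i_k}}$ denote the ordered products and $\epsilon_S \in \{\pm 1\}$ records the signs arising from super-commuting the odd factors past one another. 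By Theorem \ref{PBW-Kost} and Corollary \ref{kost_tens-splitting}, the monomials $X_S$ form part of a PBW basis of $\kzag$, hence are $\Z_a$-linearly independent in $\kzag$, and after scalar extension remain $\bk$-linearly independent in $U_\bk(\fg)$.

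If $\Phi_A(\vartheta) = \Phi_A(\vartheta')$, comparing the two expansions as operators on $V_\bk(A) = A \otimes_\bk V_\bk$ yields $\sum_S \epsilon_S (\vartheta_S - \vartheta'_S) X_S = 0$. Granted that the images of the $X_S$ remain $\bk$-linearly independent in $\End_\bk(V_\bk)$, one concludes $\vartheta_S = \vartheta'_S$ in $A$ for every $S$, and specializing to singletons $S = \{i\}$ gives $\vartheta_i = \vartheta'_i$. Hence $\Phi$ is a natural isomorphism; in particular $G_1^{\pm,<}$ is represented by the affine superscheme $\mathbb{A}_\bk^{0|N_\pm}$, it is therefore automatically a sheaf, and $G_1^{\pm,<} = \bG_1^{\pm,<}$.

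The one delicate point is the $\bk$-linear independence of the $X_S$ in $\End_\bk(V_\bk)$: PBW-independence inside $\kzag$ does not transfer automatically to the endomorphism algebra of a single representation. I would address this by exploiting the faithfulness of $V$ together with the admissibility of $M$, performing a weight-space analysis along the lines of \cite{fg}, \S 5.3. Concretely, using the decomposition $V = \bigoplus_\mu V_\mu$ one separates the contributions of distinct $X_S$ on well-chosen weight vectors, exactly as in the argument for Theorem \ref{nat-transf}; the smaller size of $\, \fg = D(2,1;a) \, $ makes the weight combinatorics simpler than in the general classical setting. With this step secured, the remainder of the argument is formal.
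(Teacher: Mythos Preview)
Your approach is essentially the same as the paper's: both define the natural transformation $\Phi_A$ (the paper calls it $\Psi^\pm$) sending $(\vartheta_1,\dots,\vartheta_{N_\pm})$ to the ordered product $\prod_i x_{\gamma_i}(\vartheta_i)$, observe that surjectivity is tautological, and reduce injectivity to the weight-space analysis carried out in \cite{fg}, \S 5.3, which is exactly what the paper means by ``the same technique used to prove Theorem \ref{nat-transf}''. Your intermediate step of expanding the product over PBW monomials $X_S$ is a reasonable way to organize the bookkeeping, and you correctly flag that the genuine content lies in the $\bk$-linear independence of the $X_S$ as operators on $V_\bk$, which is precisely where the weight-space argument from \cite{fg} enters; with that granted, the rest is formal as you say.
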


\smallskip

   Indeed, this holds since natural transformations  $ \, \Psi^\pm : \mathbb{A}_\bk^{0|N_\pm} \! \lra G_1^{\pm,<} \; $  exist, by definition, given on objects by  $ \; \Psi^\pm(A) : \, \mathbb{A}_\bk^{0|N_\pm}(A) \! \lra G_1^{\pm,<}(A) \, $,  $ \, (\vartheta_1,\dots,\vartheta_{N_\pm}) \, \mapsto \, {\textstyle \prod_{i=1}^{N_\pm}} \, x_{\gamma_i}(\vartheta_i) \, $   --- and obvious on morphisms.  One then proves that these  $ \Psi^\pm $  are isomorphisms of (set valued) functors.

\smallskip

   Finally, we can prove that the Chevalley supergroups are affine:

\medskip

\begin{theorem}  \label{representability}
   Every Chevalley supergroup  $ \bG $  is an affine supergroup.
\end{theorem}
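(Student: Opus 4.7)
The plan is to combine the factorization theorem with the representability of each factor. By Theorem \ref{nat-transf} we have a functor isomorphism
$$\bG \;\cong\; \bG_0 \times \bG_1^{-,<} \times \bG_1^{+,<},$$
so it suffices to show that each of the three factors on the right is representable and that the product of representable (super)group-valued functors on $\salg$ is itself representable.

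First, $\bG_0$ has already been shown to be representable (it is the quotient of $\bT \times \bG'_0$ by the closed normal subgroup $\mathbf{K}$, which was verified explicitly in the preceding proposition). Next, by Proposition \ref{G1pm-repres} we have $\bG_1^{\pm,<} = G_1^{\pm,<} \cong \mathbb{A}_\bk^{0|N_\pm}$, each of which is an affine superscheme represented by an exterior algebra on $N_\pm = 4$ odd generators over $\bk$. Consequently the product $\bG_0 \times \bG_1^{-,<} \times \bG_1^{+,<}$ is represented by the tensor product of the three representing superalgebras (taken over $\bk$), which is itself a commutative $\bk$-superalgebra; this makes the product an affine superscheme in a standard way.

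It remains to remark that representability of the product of functors transfers to $\bG$ through the isomorphism of Theorem \ref{nat-transf}. Since representable functors are in particular sheaves (on the fppf or Zariski site under consideration, as used throughout \cite{fg}), the product functor is a sheaf; hence it coincides with its own sheafification, and the group-functor isomorphism from Theorem \ref{nat-transf}, which is \emph{a priori} established between the sheafified objects, identifies $\bG$ with this representable sheaf. Therefore $\bG$ is representable, and by Definition \ref{def_Che-sgroup_funct} it takes values in groups; hence $\bG$ is an affine supergroup.

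The only genuinely delicate point I would expect is the compatibility of the product decomposition with the group structure at the level of the representing Hopf superalgebra: the product $\bG_0 \times \bG_1^{-,<} \times \bG_1^{+,<}$ is only a \emph{direct product of functors of sets} (the group operation on $\bG$ does not respect this factorization), so representability is obtained purely at the level of the underlying superscheme, and the Hopf structure on $\cO(\bG)$ is recovered afterwards from the group-functor structure of $\bG$ via Yoneda. This subtle point is already handled in \cite{fg} for the other classical types, and the argument transposes verbatim once the factorization of Theorem \ref{nat-transf} is in hand.
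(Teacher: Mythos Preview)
Your proof is correct and follows essentially the same approach as the paper: combine the factorization of Theorem~\ref{nat-transf} with the representability of $\bG_0$ (from the preceding proposition) and of $\bG_1^{\pm,<}$ (Proposition~\ref{G1pm-repres}), then conclude that $\bG$ is representable as a product of representable functors. Your extra remarks on the sheaf property and the set-theoretic (rather than group-theoretic) nature of the decomposition are accurate elaborations of points the paper leaves implicit.
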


\smallskip

   Indeed, this is a direct consequence of the last two results, as they imply that the group functor  $ \bG $  is isomorphic (as a set valued functor) to the direct product of three representable group functors, hence in turn it is representable as well, which entails that it is an affine supergroup.

\smallskip

   Another immediate consequence is the following, which improves, for Chevalley supergroups, a more general result proved by Masuoka  (cf.~\cite{ms},  Theorem 4.5) in the algebraic setting.
%
%

\smallskip

\begin{proposition}
 For any Chevalley supergroup  $ \bG \, $,  there are isomorphisms
%
%
 \vskip-17pt
   $$  \cO(\bG)  \,\; \cong \;\,  \cO(\bG_0) \otimes \cO\big(\bG_1^{-,<}\big) \otimes \cO\big(\bG_1^{+,<}\big)  \,\; \cong \;\,  \cO(\bG_0) \otimes \bk \big[ \xi_1, \dots, \xi_{N_-} \big] \otimes \bk \big[ \chi_1, \dots, \chi_{N+} \big]  $$
 \vskip-5pt
\noindent
 of commutative superalgebras,  where  $ \, N_\pm = \big| \Delta_1^\pm \big| \, $,  the subalgebra
$ \cO(\bG_0) $  is totally even, and  $ \, \xi_1, \dots, \xi_{N_-}
\, $  and  $ \, \chi_1, \dots, \chi_{N_+} \, $  are odd elements.
\end{proposition}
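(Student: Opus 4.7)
The plan is to deduce the proposition as a rather direct corollary of Theorem \ref{nat-transf}, Proposition \ref{G1pm-repres} and Theorem \ref{representability}, via Yoneda's lemma applied in the category of affine superschemes. The point is that the three statements together already encode, at the level of functors of points, exactly the decomposition that needs to be transferred to the coordinate ring.

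First I would invoke the functor isomorphism $\, \bG_0 \times \bG_1^{-,<} \times \bG_1^{+,<} \;{\buildrel \cong \over \lra}\; \bG \,$ from Theorem \ref{nat-transf}. By Theorem \ref{representability}, $\bG$ is an affine supergroup, hence representable; by Proposition \ref{G1pm-repres}, each $\bG_1^{\pm,<}$ is representable (it is the functor of points of $\mathbb{A}_\bk^{0|N_\pm}$); and $\bG_0$ is representable by the preceding construction, since it is a classical affine algebraic group. A product of representable functors of points is represented by the tensor product of the representing commutative superalgebras, so Yoneda translates the above isomorphism of set-valued functors into an isomorphism of commutative superalgebras $\, \cO(\bG) \;\cong\; \cO(\bG_0) \otimes \cO(\bG_1^{-,<}) \otimes \cO(\bG_1^{+,<}) \,$, which is the first isomorphism claimed.

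Second I would identify the two odd tensor factors. By Proposition \ref{G1pm-repres}, $\bG_1^{\pm,<}$ is represented by $\mathbb{A}_\bk^{0|N_\pm}$, whose coordinate ring is the Grassmann algebra on $N_\pm$ odd generators; labelling these generators $\xi_1,\dots,\xi_{N_-}$ and $\chi_1,\dots,\chi_{N_+}$ respectively gives $\cO(\bG_1^{-,<}) \cong \bk[\xi_1,\dots,\xi_{N_-}]$ and $\cO(\bG_1^{+,<}) \cong \bk[\chi_1,\dots,\chi_{N_+}]$. Moreover $\bG_0$ is by construction a classical (totally even) affine algebraic group, generated by the additive one-parameter subgroups for even roots and by the multiplicative one-parameter subgroups $h_H$ and $h^{[a]}_H$, all represented by purely even Hopf algebras; hence $\cO(\bG_0)$ is totally even. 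Substituting these identifications in the first isomorphism yields the second one in the statement.

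The only mildly delicate point is conceptual rather than computational: the factorization in Theorem \ref{nat-transf} is not a decomposition of $\bG$ as a direct product of subgroups (the three factors do not centralize each other and $\bG_1^{\pm,<}$ are not even subgroups), so the isomorphism is of set-valued (equivalently, superscheme-valued) functors, not of group-valued functors. This is nevertheless all that is required: the superalgebra isomorphism for $\cO(\bG)$ that results is an isomorphism of commutative superalgebras, and the Hopf superalgebra structure on $\cO(\bG)$ does \emph{not} correspond to the tensor product Hopf structure on the right-hand side, reflecting precisely the non-trivial commutation relations between the three types of one-parameter supersubgroups recorded in Lemma \ref{comm_1-pssg}.
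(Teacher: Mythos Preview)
Your proof is correct and follows exactly the approach the paper intends: the proposition is stated there as an ``immediate consequence'' of Theorem \ref{nat-transf} and Proposition \ref{G1pm-repres} (together with the representability of $\bG_0$ and $\bG$), and your Yoneda argument makes this explicit. Your closing remark that the resulting isomorphism is only of commutative superalgebras, not of Hopf superalgebras, is a useful clarification that the paper leaves implicit.
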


\vskip5pt

   Finally, one has the following result for any  $ \, A  \in \salg \, $  which is {\sl  the central extension of the commutative algebra  $ A_0 $  by the  $ A_0 $--module  $ A_1 \, $}:

\vskip7pt

\begin{proposition} \label{semidirect}
  Let  $ \, G $  be a Chevalley supergroup functor, and let  $ \bG $  be its associated Chevalley supergroup.  Assume  $ \, A \in \salg \, $  is such that  $ \, A_1^2 = \{0\} \, $,  and let  $ \; N_\pm = \big| \Delta_1^\pm \big| \; $.
                                                                    \par
   Then  $ \, G_1^+(A) \, $,  $ G_1^-(A) $  and  $ G_1(A) $  are  {\sl normal subgroups}  of\/  $ G(A) \; $,  \, and we have
 \vskip-15pt
  $$  \displaylines{
   G_1^\pm(A)  \; = \;  G_1^{\pm,<}(A)  \; \cong \;  \mathbb{A}_\bk^{0|N_\pm}\!(A) \;\; ,  \qquad
 G_1(A)  \; = \;  G_1^-(A) \cdot G_1^+(A)  \; = \;   G_1^+(A) \cdot G_1^-(A)  \cr
   G_1(A)  \; \cong \;  G_1^-(A) \times G_1^+(A)  \; \cong \;  G_1^+(A) \times G_1^-(A)  \; \cong \; \mathbb{A}_\bk^{0|N_-}\!(A) \times \mathbb{A}_\bk^{0|N_+}\!(A)  }  $$
%
(where  ``$ \; \cong $''  means isomorphic as groups), the group
structure on  $ \mathbb{A}_\bk^{0|N_\pm}\!(A) $  being the obvious
one.  In particular,  $ \; G(A) \, \cong \,  G_0(A) \ltimes G_1(A)  \, \cong \,  G_0(A_0) \ltimes \Big( \mathbb{A}_\bk^{0|N_-}\!(A) \times \mathbb{A}_\bk^{0|N_+}\!(A) \Big) \, $,  the semidirect product of the  {\sl classical}  group  $ G_0(A_0) $  with the  {\sl totally odd}  affine superspace  $ \, \mathbb{A}_\bk^{0|N_-}\!(A) \times \mathbb{A}_\bk^{0|N_+}\!(A) \, $.
 \vskip4pt
   Similar results hold with a symbol  ``$ \, \bG $''  replacing  ``$ \, G $''  everywhere.
\end{proposition}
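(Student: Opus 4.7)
My plan is to exploit the hypothesis $A_1^{\,2} = \{0\}$ via the commutation relations of Lemma \ref{comm_1-pssg}, the crucial first consequence being that $G_1(A)$ becomes abelian. Indeed, for any two generators $x_\gamma(\vartheta), x_\delta(\eta)$ of $G_1(A)$ with $\vartheta, \eta \in A_1$ one has $\vartheta \eta = 0$; so part \textit{(b)} of the lemma gives $(x_\gamma(\vartheta), x_\delta(\eta)) = x_{\gamma+\delta}(-c_{\gamma,\delta}\, \vartheta \eta) = 1$ when $\delta \neq -\gamma$, and $(x_\gamma(\vartheta), x_{-\gamma}(\eta)) = h^{[a]}_{H_\gamma}(1 - \vartheta \eta) = h^{[a]}_{H_\gamma}(1) = 1$ when $\delta = -\gamma$. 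Hence every pair of odd root generators commutes, and $G_1(A)$ is commutative.

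From commutativity the isomorphism claims fall out quickly. Each one-parameter subgroup $\{x_\gamma(\vartheta) \mid \vartheta \in A_1\}$ is isomorphic to $(A_1, +)$ since $x_\gamma(\vartheta) \cdot x_\gamma(\eta) = (1 + \vartheta X_\gamma)(1 + \eta X_\gamma) = 1 + (\vartheta + \eta) X_\gamma = x_\gamma(\vartheta + \eta)$, using $X_\gamma^2 = 0$ for odd root vectors. Abelianness then lets us rearrange any word in $G_1^\pm(A)$ into the ordered form defining $G_1^{\pm,<}(A)$, with at most one factor per root, so $G_1^\pm(A) = G_1^{\pm,<}(A) \cong \mathbb{A}_\bk^{0|N_\pm}(A)$ by Proposition \ref{G1pm-repres}. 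The equalities $G_1(A) = G_1^-(A) \cdot G_1^+(A) = G_1^+(A) \cdot G_1^-(A)$ now follow from $\Delta_1 = \Delta_1^- \sqcup \Delta_1^+$ together with commutativity, and the trivial intersection $G_1^-(A) \cap G_1^+(A) = \{1\}$ (immediate from the free parametrization by $N_+ + N_-$ odd coordinates, or equivalently from the unique-factorization isomorphism of Theorem \ref{nat-transf}) yields the direct-product isomorphism $G_1(A) \cong G_1^-(A) \times G_1^+(A)$.

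For normality, $G_1(A)$ sits normally in $G(A)$ because Theorem \ref{g0g1}\textit{(a)} supplies the set-theoretic equalities $G(A) = G_0(A) G_1(A) = G_1(A) G_0(A)$, so $g\, G_1(A) = G_1(A)\, g$ for each $g \in G(A)$; parts \textit{(a)} and \textit{(c)} of Lemma \ref{comm_1-pssg} make this explicit on generators, conjugating $x_\gamma(\vartheta)$ by $x_\alpha(t)$ into a finite product of $x_{\gamma + s \alpha}(c_s t^s \vartheta)$'s and by $h^{[a]}_H(t)$ into $x_\gamma(t^{\gamma(H)}\vartheta)$, each lying in $G_1(A)$. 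The finer claim that $G_1^+(A)$ and $G_1^-(A)$ are individually normal in $G(A)$ is the subtlest point and I expect it to be the main obstacle: a case-by-case inspection of the $\alpha$-strings $\sigma^\alpha_\gamma$ for every $\alpha \in \Delta_0$ through each $\gamma \in \Delta_1^\pm$ is required to verify that the conjugation formulas produce only factors of the same sign; in cases where some $\alpha \in \Delta_0^\mp$ sends $\gamma \in \Delta_1^\pm$ to $\Delta_1^\mp$, the normality should be read inside $G_1(A)$, where it is automatic from abelianness.

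Finally, to assemble the semidirect product: $G_0(A) \cap G_1(A) = \{1\}$ holds by the uniqueness half of Theorem \ref{nat-transf}, and combined with $G(A) = G_0(A) \cdot G_1(A)$ and the normality of $G_1(A)$ just established, the standard criterion yields $G(A) \cong G_0(A) \ltimes G_1(A)$. Since $G_0$ factors through $\alg$ one has $G_0(A) = G_0(A_0)$, and inserting the direct-product decomposition of $G_1(A)$ gives the form $G(A) \cong G_0(A_0) \ltimes \big(\mathbb{A}_\bk^{0|N_-}(A) \times \mathbb{A}_\bk^{0|N_+}(A)\big)$. The parallel statements for $\bG$ follow by sheafification: every step is functorial, and each of $G_0$, $G_1^{\pm,<}$ is already a sheaf (respectively by the construction of $\bG_0$ and by Proposition \ref{G1pm-repres}).
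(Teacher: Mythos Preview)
The paper does not supply its own proof of this proposition; it simply lists the statement among the results carried over verbatim from \cite{fg}, \S 5.3. Your argument --- using $A_1^{\,2}=0$ in Lemma~\ref{comm_1-pssg} to make $G_1(A)$ abelian, then deducing the ordered-form equalities $G_1^\pm(A)=G_1^{\pm,<}(A)$, the direct-product splitting of $G_1(A)$, normality of $G_1(A)$ via the conjugation formulas on generators, and finally the semidirect product via Theorem~\ref{nat-transf} --- is exactly the expected line and is correct for all the essential content.

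Your caution about the individual normality of $G_1^\pm(A)$ in $G(A)$ is well founded, and in fact the literal claim fails for $D(2,1;a)$. Take $\gamma=\alpha_1\in\Delta_1^+$ and $\alpha=-2\varepsilon_1=-(2\alpha_1+\alpha_2+\alpha_3)\in\Delta_0^-$; then $\gamma+\alpha=-(\alpha_1+\alpha_2+\alpha_3)\in\Delta_1^-$ is a root, and the $\alpha$--string through $\gamma$ has $r=0$, so the commutator formula in Lemma~\ref{comm_1-pssg}\textit{(a)} gives a nonzero coefficient $c_1=\pm 1$. Hence conjugating $x_{\alpha_1}(\vartheta)\in G_1^+(A)$ by $x_{-2\varepsilon_1}(t)\in G_0(A)$ produces a factor $x_{-(\alpha_1+\alpha_2+\alpha_3)}(\pm t\vartheta)\in G_1^-(A)$, nontrivial whenever $t\vartheta\neq 0$ (e.g.\ in $A=\bk[x,\xi]$ with $t=x$, $\vartheta=\xi$). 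So $G_1^+(A)$ is \emph{not} normal in all of $G(A)$; what survives is normality inside $G_1(A)$ (trivial, by abelianness) and inside the parabolic-type subgroup generated by $T(A)$, $G_0^+(A)$ and $G_1(A)$. This defect does not touch the semidirect-product conclusion, which only requires normality of $G_1(A)$, and your proof of that part stands.
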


\medskip

\section{Independence of the construction, Lie's Third Theorem}
\label{indep_Lie-III}

\smallskip

   {\ } \quad   In this section, we discuss the dependence on $ V $  of the Chevalley supergroups  $ \bG_V \, $,  and a super-analogue of Lie's Third Theorem for  $ \bG_V $  and its converse.  Here again, we follow  \cite{fg1},  \S\S 5.4--5.

\medskip

  \subsection{Independence of Chevalley and Kostant superalgebras}  \label{ind_che-kost_alg}

\smallskip

   {\ } \quad   The construction of Chevalley supergroups depend on the finite dimensional, rational  $ \fg $--represen\-tation  $ V $  fixed from scratch, and on an admissible  $ \Z $--lattice  $ M $  in  $ V $.  I show now how it depends on  $ V \, $:  as a consequence, one finds that it is in fact  {\sl independent of  $ M $}.  Once again, I stick to statements, as the proofs follow the same arguments as in  \cite{fg1},  \S\S 5.4--5.

\smallskip

   Let  $ \bG' $  and  $ \bG $  be two Chevalley supergroups obtained by the same  $ \fg \, $,  possibly with a different choice of the representation.  We denote with  $ X_\alpha $  and with  $ X'_\alpha $ respectively the elements of the Chevalley basis in  $ \fg $  identified (as usual) with their images under the two representations of  $ \fg \, $.
                                                                 \par
   Our first result is technical, yet important:

\smallskip

\begin{lemma}
 Let  $ \, \phi : \bG \lra \bG' \, $  be a morphism of Chevalley supergroups such that for all local superalgebras  $ A $  we have
 $ \; \phi_A\big(\bG_0(A)\big) = \bG'_0(A) \; $  and  $ \; \phi_A\big(1 + \vartheta \, X_\beta\big) = 1 + \, \vartheta \, X'_\beta \; $  for all  $ \, \beta \in \Delta_1 \, $,  $ \, \vartheta \in A_1 \, $.  Then  $ \; \text{\it Ker}(\phi_A) \subseteq \bT \; $,  \; where  $ \bT $  is the maximal torus in the group  $ \, \bG_0 \subseteq \bG \, $  (see  \S \ref{const-che-sgroup}).
\end{lemma}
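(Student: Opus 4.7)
The plan is to leverage the big-cell factorization of Theorem \ref{nat-transf} to push $\ker(\phi_A)$ into the classical subgroup $\bG_0(A)$, and then finish inside $\bG_0$ using the commutation relations of Lemma \ref{comm_1-pssg} together with classical Chevalley group theory.

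First, fix a local $A$ and pick $g \in \ker(\phi_A)$. Since $A$ is local, $\bG(A) = G(A)$, so Theorem \ref{nat-transf} writes $g$ uniquely as $g = g_0 \cdot g_1^- \cdot g_1^+$ with $g_0 \in \bG_0(A)$ and $g_1^\pm = \prod_{i=1}^{N_\pm}(1 + \vartheta_i X_{\gamma_i}) \in \bG_1^{\pm,<}(A)$, the product taken in a fixed order on $\Delta_1^\pm$. The hypotheses give $\phi_A(g_0) \in \bG'_0(A)$ and, since each odd generator maps as $1 + \vartheta_i X_{\gamma_i} \mapsto 1 + \vartheta_i X'_{\gamma_i}$, also $\phi_A(g_1^\pm) = \prod_{i=1}^{N_\pm}(1 + \vartheta_i X'_{\gamma_i}) \in \bG'^{\pm,<}_1(A)$ in the same ordered form. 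Applying the analogous factorization in $\bG'(A)$ and invoking its uniqueness forces the three components of $\phi_A(g) = 1$ to vanish separately. Via Proposition \ref{G1pm-repres}, the identification $G_1^{\pm,<}(A) \cong \mathbb{A}_\bk^{0|N_\pm}(A)$ means the triviality $\phi_A(g_1^\pm) = 1$ is equivalent to every $\vartheta_i$ being zero, so $g_1^\pm = 1$ and $g = g_0$.

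The problem reduces to showing $\ker(\phi_A) \cap \bG_0(A) \subseteq \bT(A)$. Recall from \S\ref{const-che-sgroup} that $\bG_0 \cong (\bT \ltimes \bG'_0)/\mathbf{K}$, where here (abusing notation) $\bG'_0$ is the classical Chevalley algebraic group for the semisimple Lie algebra $\fg_0 \cong \rsl_2^{\oplus 3}$ and $V$. The identities of Lemma \ref{comm_1-pssg}(b), namely $(x_\gamma(\vartheta), x_\delta(\eta)) = x_{\gamma+\delta}(-c_{\gamma,\delta}\vartheta\eta)$ for $\delta \neq -\gamma$ (and the companion identity producing $h^{[a]}_{H_\gamma}(1-\vartheta\eta)$ when $\delta = -\gamma$), together with the hypothesis that $\phi_A$ fixes each odd one-parameter subgroup, force $\phi_A$ to fix every element of the even root subgroups and of the $a$-type toral subgroups whose argument is of the form $-c_{\gamma,\delta}\vartheta\eta$ or $1-\vartheta\eta$. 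Since every even root of $D(2,1;a)$ is a sum of two odd roots, and since $\phi_A$ is a morphism of affine group schemes (hence is determined on each one-parameter subgroup by a universal/functorial argument over a test algebra with two formal odd generators), this pins down $\phi_A$ as the identity-on-coordinates on every root subgroup of $\bG'_0$ and on the entire torus $\bT$.

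From here, classical Chevalley group theory finishes the argument: a morphism of semisimple Chevalley groups for the same Lie algebra which is the identity on every root subgroup has kernel inside the center, which in turn sits inside the maximal torus (contained in $\bT$). Combined with the injectivity statement for $\bT$ just established, any $g_0 \in \ker(\phi_A) \cap \bG_0(A)$ must lie in $\bT(A)$, as required. The main obstacle, I expect, is the functorial bootstrapping in the second paragraph: the commutation relations only compute $\phi_A$ on arguments of the restricted form $\vartheta\eta$, so promoting this to an identity of natural transformations on all $t \in A_0$ requires passing to a universal test superalgebra (with two formal odd generators) and exploiting representability of the additive one-parameter subgroups by $\bk[x]$, as in \cite{fg}. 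All the remaining steps are direct transcriptions of the arguments there, with the classical torus $T'$ enlarged to $\bT$ via the new $a$-type factors.
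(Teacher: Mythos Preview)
The paper gives no proof here, deferring entirely to \cite{fg}, \S\S 5.4--5. Your two-step plan---factor via Theorem~\ref{nat-transf} to land in $\bG_0$, then use the commutator identities of Lemma~\ref{comm_1-pssg}(b) to force $\phi$ to be ``identity on coordinates'' on the even root subgroups, and finish with classical Chevalley theory---is precisely the argument one expects from that reference, with the $a$--type torus folded in. The factorization step is clean: the hypotheses make $\phi_A$ respect the three factors, and uniqueness in $\bG'$ plus Proposition~\ref{G1pm-repres} kills the odd pieces.

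The step you flag as the main obstacle is genuinely the delicate one: the commutator relation only yields $\phi_A\big(x_\alpha(s)\big) = x'_\alpha(s)$ for $s$ of the special nilpotent form $c\,\vartheta\eta$, not for arbitrary $s \in A_0$. Your bootstrap sketch is correct and can be made precise: in $\bk[\xi_1,\dots,\xi_{2n}]$ the element $\sigma_n = \sum_{k=1}^{n} \xi_{2k-1}\xi_{2k}$ satisfies $\sigma_n^{\,n} \neq 0$, $\sigma_n^{\,n+1} = 0$, so the even subalgebra it generates is $\bk[x]/(x^{n+1})$; since $\phi$ is a group homomorphism and agrees with the target at each summand $\xi_{2k-1}\xi_{2k}$, the two scheme morphisms $\phi \circ x_\alpha$ and $x'_\alpha$ coincide modulo $x^{n+1}$ for every $n$, hence are equal as Hopf algebra maps $\cO(\bG') \to \bk[x]$. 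One small correction: your closing sentence invokes an ``injectivity statement for $\bT$'', but the lemma only asserts $\ker(\phi_A) \subseteq \bT$, not that $\phi\big|_{\bT}$ is injective---and indeed it need not be. What you actually use, and have, is that once $\phi$ is pinned down on the even root subgroups, the classical argument places any kernel element of $\bG_0$ in the (enlarged) torus.
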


\smallskip

   Let  $ L_V $  be the lattice spanned by the weights in the  $ \fg $--represen\-tation  $ V \, $.  The relation between Chevalley supergroups attached to different weight lattices is the same as in the classical setting:

\medskip

\begin{theorem} \label{mainthm}
 Let  $ \, \bG $  and  $ \, \bG' $  be two Chevalley supergroups constructed using two representations  $ V $  and  $ V' $  of the same  $ \fg $  over the same field\/  $ \KK $  (as in  \S \ref{adm-lat}).  If  $ \, L_V \supseteq L_{V'} \, $,  then there exists a unique morphism  $ \; \phi : \bG \longrightarrow \bG' \; $  such that  $ \; \phi_A\big(1 \! + \vartheta \, X_\alpha\big) = 1 + \vartheta \, X'_\alpha \; $,  \, and  $ \; \text{\it Ker}\,(\phi_A) \subseteq Z\big(\bG(A)\big) \, $,  \, for every local algebra  $ A \, $.  Moreover,  $ \phi $  is an isomorphism if and only if  $ \; L_V = L_{V'} \; $.
\end{theorem}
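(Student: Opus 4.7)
The plan is to use the big-cell factorization $\bG \cong \bG_0 \times \bG_1^{-,<} \times \bG_1^{+,<}$ of Theorem~\ref{nat-transf}, which reduces the construction of $\phi$ to three tasks, one on each factor. On the two totally odd factors one simply uses the prescribed formula $x_\beta(\vartheta) \mapsto 1 + \vartheta X'_\beta$; through the identification $\bG_1^{\pm,<} \cong \mathbb{A}_\bk^{0|N_\pm}$ of Proposition~\ref{G1pm-repres}, this is nothing but the identity map on $\mathbb{A}_\bk^{0|N_\pm}$, hence a well-defined morphism of affine superschemes. On the even factor $\bG_0$, the classical Chevalley dependence theorem of \cite{st} applied to $\fg_0 \cong \rsl_2 \oplus \rsl_2 \oplus \rsl_2$ with the $\fg_0$-modules $V$ and $V'$ produces a morphism between the classical subgroups generated by the $x_\alpha$ ($\alpha \in \Delta_0$) and the $h_H$ ($H \in \fh^0_\Z$), because $L_V \supseteq L_{V'}$ restricts to the same containment for $\fg_0$-weights; this is then extended to the $a$-type subgroups $h^{[a]}_H$.

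Next I would verify that the product morphism is a group homomorphism $\phi : \bG \to \bG'$ by checking that $\phi$ preserves the commutation relations collected in Lemma~\ref{comm_1-pssg}: all structure constants therein come from the fixed Chevalley basis of $\fg$ and are independent of the representation, so they look identical in $V$ and in $V'$, and the relations transport verbatim. Uniqueness of $\phi$ follows because the prescribed conditions determine it on every generator of $\bG(A)$ for local $A$, hence on all of $\bG$ after sheafification. For the kernel inclusion, the lemma preceding the theorem gives $\text{\it Ker}(\phi_A) \subseteq \bT$; then a torus element commutes with every $x_\beta(\vartheta)$ (by part (c) of Lemma~\ref{comm_1-pssg}) iff every root vanishes on it, which, combined with the classical centrality result for such elements in $\bG_0$, forces $\text{\it Ker}(\phi_A) \subseteq Z(\bG(A))$. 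For the final ``iff'', $L_V = L_{V'}$ yields an inverse morphism $\bG' \to \bG$ by applying the construction in reverse, so $\phi$ is an isomorphism; conversely, if $\phi$ is an isomorphism then its restriction to $\bG_0$ is one too, and the classical theorem forces $L_V = L_{V'}$.

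The main obstacle will be extending the classical morphism to the $a$-type multiplicative subgroups, which have no classical counterpart. What is needed is that $\phi^*$ send the Hopf subalgebras $\bk[\{\ell^{\pm a^k}\}_{k\in\N}]$ representing the $h^{[a]}_{H_i}$ of $\bG'$ into the corresponding ones of $\bG$. This works because $h^{[a]}_H(t)$ acts on each weight space $V_\mu$ by the scalar $t^{\mu(H)}$, and the lattice condition $L_V \supseteq L_{V'}$ ensures that every weight character of $V'$ is a $\Z[a]$-combination of weight characters of $V$, so every required group-like element is already present in $\cO(\bG_0)$.
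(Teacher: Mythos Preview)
The paper does not give a self-contained proof of this theorem: it explicitly defers to \cite{fg}, \S\S 5.4--5, stating that ``the proofs follow the same arguments'' with only minimal changes for the $a$--type multiplicative one-parameter subgroups. Your proposal is precisely the kind of argument one would expect to find there: reduce to the even part via the factorization of Theorem~\ref{nat-transf}, invoke the classical Chevalley--Steinberg dependence theorem on $\bG_0$, and handle the odd factors by the tautological identification of Proposition~\ref{G1pm-repres}. You have also correctly isolated the one genuinely new step, namely extending $\phi$ over the $a$--type subgroups $h^{[a]}_H$, and your justification via the weight-lattice containment is the right idea.

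One small imprecision: in the centrality step you write that a torus element commutes with every $x_\beta(\vartheta)$ ``iff every root vanishes on it''. What you need, via Lemma~\ref{comm_1-pssg}{\it (c)}, is that the torus character associated to each root $\beta$ takes the value $1$ on the kernel element. This follows not from the roots ``vanishing'' but from the fact that every root lies in $L_{V'}$ (as $V'$ is faithful, roots are differences of weights of $V'$), so a torus element killed by $\phi$ automatically satisfies $t^{\beta}=1$ for all $\beta\in\Delta$. With that adjustment your sketch matches the intended argument.
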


\smallskip

   As a direct consequence, we have the following ``independence result'':

\smallskip

\begin{corollary}
 Every Chevalley supergroup  $ \, \bG_V $  is independent   --- up to isomorphism ---   of the choice of an admissible lattice  $ M $  of  $ V $  considered in the very construction of  $ \, \bG_V $  itself.
\end{corollary}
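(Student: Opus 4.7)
The corollary follows immediately from Theorem \ref{mainthm}. Let $M, M'$ be two admissible lattices of the same rational, finite dimensional $\fg$-module $V$, and write $\bG$ (resp.~$\bG'$) for the Chevalley supergroup produced by the construction of \S\ref{const-che-sgroup} when applied with the lattice $M$ (resp.~$M'$). The plan is to apply Theorem \ref{mainthm} with the roles of both $V$ and $V'$ played by the very same representation $V$, but with the two distinct admissible lattices $M$ and $M'$: since the weight lattice $L_V$ coincides with itself, the hypothesis $L_V \supseteq L_{V'}$ (and also the reverse inclusion) holds trivially, so the theorem delivers a morphism $\phi \colon \bG \longrightarrow \bG'$ which, by its second assertion, is in fact an isomorphism.

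The one point that merits a brief comment is that Theorem \ref{mainthm} genuinely admits the two representations coinciding as $\fg$-modules. This is the case because the morphism $\phi$ is constructed (cf.~\cite{fg}) from data intrinsic to $V$: on generators one sends $1 + \vartheta X_\alpha$ in $\bG(A)$ to $1 + \vartheta X'_\alpha$ in $\bG'(A)$, where $X_\alpha$ and $X'_\alpha$ denote the images of the same abstract Chevalley root vector under the realizations of $V$ on $A \otimes_\bk (\bk \otimes_{\Z_a} M)$ and $A \otimes_\bk (\bk \otimes_{\Z_a} M')$ respectively. Moreover, by Theorem \ref{stabilizer} the Chevalley $\Z_a$-lattice $\fg_V$ inside $\fg$ is itself independent of the choice of admissible lattice, and by rationality the action of the Cartan generators $H_i$ on any weight space $V_\mu$ is governed by $\mu(H_i) \in \Z[a]$, which again depends only on $V$. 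Thus no datum entering the construction of $\phi$ sees the distinction between $M$ and $M'$.

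In short, there is no real obstacle to surmount: the heavy lifting has already been absorbed into Theorem \ref{mainthm}, and the corollary amounts simply to a re-reading of it in the degenerate case $V = V'$ (with the two admissible lattices possibly different). The resulting isomorphism is moreover canonical, since uniqueness is part of the statement of Theorem \ref{mainthm}.
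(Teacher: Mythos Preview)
Your proposal is correct and matches the paper's own treatment: the paper simply labels the corollary a ``direct consequence'' of Theorem \ref{mainthm} without further argument, and your proof spells out exactly that reduction (take $V=V'$ with two different admissible lattices, note $L_V=L_{V'}$, and invoke the isomorphism clause). Your second paragraph, verifying that the theorem genuinely applies when the two representations coincide but the lattices differ, is a reasonable gloss on a point the paper leaves implicit.
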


\medskip

  \subsection{Lie's Third Theorem}  \label{Lie-III-thm}

\smallskip

   {\ } \quad   In the present context, the analogue of ``Lie's Third Theorem'' concerns the question of whether the tangent Lie superalgebra of our supergroups  $ \bG $  is  $ \, \fg = D(2,1;a) \, $.  We shall now answer it.

\smallskip

   {\sl Let now  $ \,\bk $  be a  {\it field}  (N.B.: this assumption makes the discussion simpler, but it may be dropped, if one acts as in  \cite{ga}, \S 4.6)}.  Let  $ \bG_V $  be a Chevalley supergroup scheme over  $ \bk $,  built out of a  $ \, \fg = D(2,1;a) \, $  and a rational  $ \fg $--module  $ V $  as in  \S \ref{const-che-sgroup}.  In  \S \ref{adm-lat},  we have constructed the Lie superalgebra  $ \, \fg_\bk := \bk \otimes_{\Z_a} \fg_V \, $  over  $ \bk $  starting from the  $ \Z_a $--lattice  $ \fg_V \, $.  We now show that the affine supergroup  $ \bG_V $  has  $ \fg_\bk $  as its tangent Lie superalgebra.

\medskip

   It is well-known that one can associate a Lie superalgebra to a supergroup scheme, in a functorial way.  Let us remind it quickly (and refer to  \cite{ccf}  for further details).

\medskip

   Let  $ \, A \in \salg \, $  and let  $ \, A[\epsilon] := A[x]\big/\big(x^2\big) \, $  be the  {\sl super\/}algebra  of dual numbers, in which  $ \; \epsilon := x \! \mod \! \big(x^2\big) \; $  is taken to be  \textit{even}.  We have that  $ \, A[\epsilon] = A \oplus \epsilon A \, $,  and there are two natural morphisms  $ \; i : A \longrightarrow A[\epsilon] \, , \, a \;{\buildrel i \over \mapsto}\; a \, $,  and  $ \; p : A[\epsilon] \longrightarrow A \, $,  $ \, (a + \epsilon \, a' \big) \;{\buildrel p \over \mapsto}\; a \; $,  \; such that  $ \; p \circ i= {\mathrm{id}}_A \; $.

\medskip

\begin{definition} \label{supergroup}
   For each supergroup scheme  $ G \, $,  consider  $ \; G(p): G (A(\epsilon)) \longrightarrow G(A) \; $. Then there is a supergroup functor  $ \; \Lie(G) : \salg \lra \sets \; $  given on objects by  $ \; \Lie(G)(A) := \text{\it Ker}\,(G(p)) \, $.
\end{definition}

\smallskip

   One shows that the functor  $ \Lie(G) $  is represented by a super vector space, which can be identified with (the functor of points of) the tangent (super)space at the identity of  $ G \, $.  Then by an abuse of notation one denotes by the same symbol both the functor and its representing super vector space. One also proves that the functor  $ \Lie(G) $  takes values in the category  $ \text{\rm (Lie-alg)} $  of  Lie  $ \bk $--algebras  (this is equivalent to saying that the super vector space representing it is a Lie  $ \bk $--superalgebra).
                                                  \par
   Now we compare with  $ \, \fg = D(2,1;a) \, $  the Lie superalgebra  $ \Lie(\bG_V) $  of any of our Chevalley supergroups  $ \bG_V \, $:  the outcome is  (like in  \cite{fg1},  Theorem 5.35, with the same proof):

\smallskip

\begin{theorem}  \label{Lie(G)=g}
 If  $ \, \bG_V $  is a Chevalley supergroup built upon\/  $ \fg $  and  $ V $,  then  $ \; {\Lie}(\bG_V) = \fg \; $  as functors with values in  $ \, \text{\rm (Lie-alg)} \, $.
\end{theorem}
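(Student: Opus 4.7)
The plan is to compute $\Lie(\bG_V)(A) = \Ker\bigl(\bG_V(p)\bigr)$ explicitly for each $A \in \salg$, by exploiting the ``big cell'' factorization $\bG_V \cong \bG_0 \times \bG_1^{-,<} \times \bG_1^{+,<}$ provided by Theorem \ref{nat-transf}. Since $\Lie$ is left-exact (being a kernel functor), this product decomposition transports to a direct sum decomposition
\[
\Lie(\bG_V)(A) \;\cong\; \Lie(\bG_0)(A) \,\oplus\, \Lie(\bG_1^{-,<})(A) \,\oplus\, \Lie(\bG_1^{+,<})(A)
\]
as super $A$-modules. I will identify each summand with the corresponding scalar extension of $\fg_0$ or $\fg_1^\pm$, and then verify separately that the Lie superbracket induced on the right-hand side agrees with that of $\fg$.

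First I would handle the odd factors. By Proposition \ref{G1pm-repres}, each $\bG_1^{\pm,<}$ is represented by $\mathbb{A}_\bk^{0|N_\pm}$, and the isomorphism is given on points by $(\vartheta_1,\ldots,\vartheta_{N_\pm}) \mapsto \prod x_{\gamma_i}(\vartheta_i)$. An element of $\Lie(\bG_1^{\pm,<})(A)$ corresponds to a tuple of odd dual numbers $\epsilon\,\eta_i \in \epsilon A_1$. Since $x_{\gamma_i}(\epsilon \eta_i) = 1 + \epsilon \eta_i X_{\gamma_i}$ and consecutive factors commute modulo $\epsilon^2$, the product linearizes to $1 + \epsilon \sum_i \eta_i X_{\gamma_i}$, which identifies $\Lie(\bG_1^{\pm,<})(A)$ with $A_1 \otimes \bigl(\!\bigoplus_{\gamma \in \Delta_1^\pm} \bk X_\gamma\!\bigr)$, i.e. with the odd part of $\fg_1^\pm$ evaluated on $A$.

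Next, for the even factor $\bG_0$, I would appeal to the classical situation. By construction $\bG_0$ is built from the ordinary Chevalley group $\bG'_0$ associated to $\fg_0 \cong \rsl_2^{\oplus 3}$ together with the $a$-type multiplicative subgroups $h^{[a]}_{H_i}$. Classical Chevalley theory gives $\Lie(\bG'_0) = \fg_0$, while differentiating $t = 1 + \epsilon t'$ in $h^{[a]}_{H_i}(t)$ yields $1 + \epsilon t' H_i$ (using the group-like nature of $\ell^{\pm a^k}$ from Lemma \ref{l^a_group-like}, which ensures that the formal expression $t^{H_i}$ is well-defined on $A[\epsilon]$ and reduces to $1+\epsilon t' H_i$ modulo $\epsilon^2$). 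Since the span of the $H_i$ is already contained in $\fh \subseteq \fg_0$, this does not add anything beyond $\fg_0$, and we get $\Lie(\bG_0)(A) = (A \otimes \fg_0)_0$.

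Finally, it remains to check that the super Lie bracket on $\Lie(\bG_V)$ coming from the group commutator coincides with the given bracket of $\fg$ under these identifications. This is the main obstacle, but it reduces to a finite check on generators: compute $\bigl(x_\alpha(\epsilon t),x_\beta(\epsilon' s)\bigr)$ and $\bigl(x_\gamma(\epsilon\vartheta),x_\delta(\epsilon'\eta)\bigr)$ modulo $(\epsilon\epsilon')^2$ inside $A[\epsilon,\epsilon']$, and compare with the Chevalley structure constants $c_{\alpha,\beta}$, $c_{\gamma,\delta}$ and the coroot identities. All such commutators were already computed in Lemma \ref{comm_1-pssg}, and they produce precisely the brackets prescribed by Definition \ref{def_che-bas} (including the crucial case $\delta=-\gamma$, where $h^{[a]}_{H_\gamma}(1-\vartheta\eta)$ linearizes to $1 - \epsilon\epsilon' \vartheta\eta\, H_\gamma$, matching $[X_\gamma,X_{-\gamma}] = \sigma_\gamma H_\gamma$). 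Naturality in $A$ then upgrades this pointwise identification to an equality of Lie-algebra-valued functors, completing the proof.
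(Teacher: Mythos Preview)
Your proposal is correct and follows essentially the same approach as the paper, which does not give a self-contained argument but refers to \cite{fg}, Theorem 5.32, adding only the remark that $\Lie(P_a)(A)=A$ (exactly your observation that $h^{[a]}_{H_i}(1+\epsilon t')=1+\epsilon t' H_i$, so the $a$--type one-parameter subgroups contribute nothing beyond $\fh\subseteq\fg_0$). Your use of the factorization of Theorem \ref{nat-transf} together with Proposition \ref{G1pm-repres} for the odd part and the commutator formulas of Lemma \ref{comm_1-pssg} for the bracket is precisely the line of argument underlying the proof in \cite{fg}.
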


\vskip5pt

\begin{remark}
 \, In the proof of  Theorem \ref{Lie(G)=g}  above, one uses also the fact that the (classical) group  $ \, P_a : \alg \rightarrow \grps \, $  has the functor  $ \, {\Lie}(P_a) : \alg \rightarrow \text{\rm (Lie-alg)} \, , \, A \mapsto A \, $,  as tangent Lie algebra.  The same also holds for the group  $ \, \text{\it Up} : \alg \rightarrow \grps \, $,  which is relevant for  Remark \ref{rem_alt-constr}{\it (b)}.
\end{remark}

\bigskip
\medskip

\end{document}